\documentclass[10pt,reqno]{amsart}

\title{Simultaneous $p$-orderings and minimising volumes in number fields}

\setlength{\textwidth}{5.4 in}
\setlength{\textheight}{8.5 in}

\setlength{\oddsidemargin}{0.9 cm}
\setlength{\evensidemargin}{0.9 cm}
\setlength{\topmargin}{0 cm}

\usepackage{amssymb,amsmath,mathrsfs,amsthm}
\usepackage{wrapfig}
\usepackage{graphicx}


\def\F{\mathbf{F}}

\def\text#1{\hbox{#1}}
\def\Q{{\mathbf Q}}
\def\Z{{\mathbf Z}}
\def\R{{\mathbf R}}

\def\C{{\mathbf C}}
\def\P{\mathbf P}

\def\Spec{{\rm Spec}}

\def\conv{{\rm conv}}
\def\col{{\rm col}}



\def\Bb#1#2{{\def\md{\bigm| }#1\bigl[\,#2\,\bigr]}}

\def\Pb{\Bb\P}

\newtheorem{theorem}{Theorem}[section]
\newtheorem{definition}{Definition}[section]
\newtheorem{lemma}[theorem]{Lemma}
\newtheorem{corollary}[theorem]{Corollary}
\newtheorem{proposition}[theorem]{Proposition}

\newtheorem{example}{Example}[section]
\newtheorem{remark}{Remark}
\newtheorem*{question*}{Question}

\newtheorem{question}[theorem]{Question}

\theoremstyle{definition}
\numberwithin{equation}{section}

\begin{document}

\author[J.~Byszewski]{Jakub Byszewski}
\address[J.~Byszewski]{Department of Mathematics and Computer Science, Institute of Mathematics, Jagiellonian University,
{\L}ojasiewicza~6, 30-348 Krak\'ow, Poland}
\email{jakub.byszewski@uj.edu.pl}

\author[M.~Fr\c{a}czyk]{Miko\l aj Fr\c{a}czyk}
\address[M.~Fr\c{a}czyk]{D\'epartement de Math\'ematiques, Universit\'e Paris-Sud, Universit\'e Paris Saclay, 91405 Orsay ´
cedex, France}
\email{mikolaj.fraczyk@math.u-psud.fr}

\author[A.~Szumowicz]{Anna Szumowicz}
\address[A.~Szumowicz]{Department of Mathematics and Computer Science, Institute of Mathematics, Jagiellonian University,
{\L}ojasiewicza~6, 30-348 Krak\'ow, Poland
\newline Institut de Math\'ematiques de Jussieu-Paris Rive Gauche,  4, Place Jussieu, 75252 Paris Cedex 05, France}
\email{anna.szumowicz@imj-prg.fr}

\keywords{Simultaneous orderings, integer-valued polynomials, generalized factorials, Euler-Kronecker constants, number fields.}

\maketitle
\begin{abstract}
In \cite{PV}, V.V. Volkov and F.V. Petrov consider the problem of existence of the so-called $n$-universal sets (related to simultaneous $\mathfrak{p}$-orderings of Bhargava) in the ring of Gaussian integers. A related problem concerning Newton sequences was considered by D. Adam and P.-J. Cahen in \cite{AC}. We extend their results to arbitrary imaginary quadratic number fields and prove an existence theorem that provides a strong counterexample to a conjecture of Volkov-Petrov on minimal cardinality of $n$-universal sets. Along the way, we discover a link with Euler-Kronecker constants and prove a lower bound on Euler-Kronecker constants which is of the same order of magnitude as the one obtained by Ihara.

\end{abstract}

\section{Introduction}
In \cite{B1} (see also \cite{B2}), M. Bhargava introduced a generalized notion of the factorial function defined in any Dedekind domain via a notion of a $\mathfrak{p}$-ordering, introduced by him as well. Let us recall here the definition of a $\mathfrak{p}$-ordering and the generalized factorial function\footnote{Bhargava defined it with respect to an arbitrary subset $S$ of the ring $A$, we recall here only a special case of the definition where $S=A$.}.
\begin{definition}
Let $A$ be a Dedekind domain and $\mathfrak{p}$ a prime ideal of $A$. Let $v_{\mathfrak{p}}$ denote the additive $\mathfrak{p}$-adic valuation on $A$. A sequence $s_0,s_1,\ldots$ of elements of $A$ (finite or infinite) is called a \textbf{$\mathfrak{p}$-ordering} if for every $n$ the element $s_n$ is chosen so that the valuation $v_{\mathfrak{p}}(\prod_{i=0}^{n-1}(s_i-s_n))$ is the lowest possible. 
We define the function $w_{\mathfrak p}(n)$ by $$w_{\mathfrak p}(n)=v_{\mathfrak{p}}\left(\prod_{i=0}^{n-1}(s_i-s_n)\right).$$ It can be shown (see \cite{B1}) that this value does not depend on the choice of a $\mathfrak{p}$-ordering. We define the \textbf{generalized factorial} of a positive integer $n$ in $A$ as the ideal
$$n!_A=\prod_{\mathfrak{p}\in \Spec \, A}\mathfrak{p}^{w_{\mathfrak p}(n)}.$$
\end{definition}
In the case $A=\Z$, we obtain the usual factorial function. However, the usual definition of the factorial function in the case $A=\Z$ is simpler, because there exists a sequence $0,1,2,3,\ldots$, which is a simultaneous $p$-ordering for all primes $p$. One can ask for which number fields $K$ there exists a simultaneous $p$-ordering in the ring of integers $\mathcal{O}_K$. This is a particular case of the question \cite[Question 30]{B2}, where Bhargava asked for which subsets $S$ of Dedekind rings there exist simultaneous $\mathfrak{p}$-orderings in $S$. For $K\neq\Q$ no example of such a sequence is known, and they are expected not to exist, but the evidence is scant. In \cite{W}, M. Wood proved that there are no simultaneous $\mathfrak{p}$-orderings in imaginary quadratic number fields. Simultaneous $\mathfrak{p}$-orderings are also called Newton sequences and have been recently studied by D. Adam and P.-J. Cahen (\cite{AC}, \cite{C}).

The problem of existence of simultaneous $\mathfrak{p}$-orderings is related to integer-valued polynomials. Let $A$ be a domain with field of fractions $K$. We call a polynomial $P\in K[X]$ {\bf integer-valued} if $f(A)\subset A$. In \cite{PV}, V.V. Volkov and F.V. Petrov define $n$-universal sets as follows.

\begin{definition} Let $A$ be a domain and $K$ its field of fractions. We call a finite subset $S\subset A$ an \textbf{$n$-universal set} if the following statement holds: For every polynomial $P\in K[X]$ of degree at most $n$ we have $P(A)\subset A$ if and only if $P(S) \subset A$.
\end{definition}

Thus, a set is $n$-universal if we can test whether a polynomial of degree $\leq n$ is integer-valued on the elements of this set only. Related to universal sets are Newton sequences.

\begin{definition} Let $A$ be a domain. A sequence $s_0,s_1,\ldots, s_n$ is a Newton sequence if for every $0\leq m\leq n$ the set $\{s_0,s_1,\ldots,s_m\}$ is $m$-universal. The integer $n$ is called the length of the Newton sequence.
\end{definition}

The minimal cardinality of an $n$-universal set is $n+1$. More precisely, we have the following lemma.

\begin{lemma}
\label{dolne}
Let $A$ be a domain which is not a field. Then every $n$-universal subset of $A$ has at least $n+1$ elements.
\end{lemma}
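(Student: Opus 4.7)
The plan is to argue by contradiction: suppose $S=\{s_1,\ldots,s_m\}\subset A$ is $n$-universal with $m\leq n$, and construct an explicit polynomial $P\in K[X]$ of degree at most $n$ that is integer-valued on $S$ but not on all of $A$.

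The first step is to extract two consequences of the hypothesis that $A$ is a domain but not a field. On the one hand, $A$ must contain a nonzero non-unit $a$, for otherwise every nonzero element would be invertible and $A$ would be a field. On the other hand, $A$ is infinite, since a finite integral domain is automatically a field; therefore we can choose some $b\in A\setminus S$.

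The second step is to write down a Lagrange-type interpolant vanishing on $S$ and prescribed at $b$, namely
$$P(X)=\frac{1}{a}\cdot\prod_{i=1}^{m}\frac{X-s_i}{b-s_i}\in K[X].$$
The factors $b-s_i$ are nonzero since $b\notin S$, so $P$ is well defined, of degree $m\leq n$. Clearly $P(s_i)=0\in A$ for each $i$, so $P(S)\subset A$, while $P(b)=1/a\notin A$ because $a$ is a non-unit. Hence $P$ certifies a violation of $n$-universality, yielding the desired contradiction.

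There is no substantial obstacle in this argument; the only point that requires a moment of reflection is the use of the hypothesis that $A$ is not a field, which is needed both to guarantee the existence of the non-unit $a$ and to ensure that $A$ is large enough to provide a test point $b$ outside the finite set $S$.
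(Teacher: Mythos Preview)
Your proof is correct and follows essentially the same approach as the paper: the paper's argument is a one-line sketch stating that by Lagrange interpolation one can produce a polynomial of degree at most $n$ vanishing on $S$ and taking an arbitrary value at some $x\notin S$, and your polynomial $P$ is precisely such an interpolant with target value $1/a$. Your write-up is more careful in spelling out how the hypothesis that $A$ is not a field is used (both to supply the non-unit $a$ and to guarantee $A$ is infinite so that a test point $b\notin S$ exists), but the underlying idea is identical.
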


Indeed, by Lagrange approximation it is easy to construct a polynomial of degree $n$ that vanishes on a given set $S$ of size $n$ and takes an arbitrary value on a given element $x\not\in S$.
 
Of particular interest are thus $n$-universal sets with precisely $n+1$ elements. We call such sets \textbf{optimal $n$-universal} sets or simply \textbf{$n$-optimal} sets.

\begin{example}
The set $\{0,1,\ldots, n\}$ is an $n$-optimal set in $\Z$. More generally, $n$-optimal sets in $\Z$ have the form $\{a,a+1,\ldots,a+n\}$ for some $a\in \Z$. 
\end{example}

Indeed, the latter statement follows easily from Proposition \ref{p.NunivVol}, since the volume of a subset of $\Z$ of a given size is minimized precisely on sets of consecutive integers.

Given a ring $A$, it is natural to ask what is the size of a minimal $n$-universal set in $A$ and in particular whether $n$-optimal sets exist. For small $n$, one can construct such sets in other rings than $\mathbf{Z}$ as well, and it is very interesting to know for which rings such sets exist for all $n$. In \cite{PV}, Volkov and Petrov showed that for $K=\Q(i)$ there are no $n$-optimal sets in $\mathcal{O}_K$ for large enough $n$ and also remarked that $n$-optimal sets exist for $n=1,2,3,5$ but not for $n=4$. Furthermore, they constructed examples of $n$-universal sets of size $\frac{\pi}{2}n+o(n)$ and asked if their examples are asymptotically minimal, i.e., if the size of a minimal $n$-universal set in $\Z[i]$ grows as $\frac{\pi}{2}n+o(n)$ (see \cite[Conjecture]{PV}). We modify the geometric argument used in \cite{PV} to prove the following result.
\begin{theorem}\label{cebula}
Let $K$ be an imaginary quadratic number field. Then for large enough $n$ there are no $n$-optimal subsets of $\mathcal{O}_K$. 
\end{theorem}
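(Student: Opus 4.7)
My plan is to convert $n$-optimality into a precise equality between a geometric ``Vandermonde'' quantity and a product of Bhargava generalized factorials, and then show that the former is strictly larger for large $n$. First, if $S = \{s_0, \ldots, s_n\} \subset \mathcal{O}_K$ is $n$-optimal, then standard local arguments at each prime $\mathfrak{p}$ force some reordering of $S$ to be a $\mathfrak{p}$-ordering; this is the usual link between $n$-universal sets and simultaneous $\mathfrak{p}$-orderings. Since $\prod_{i<j}(s_i - s_j)$ is independent of the ordering, this forces $v_{\mathfrak{p}}\bigl(\prod_{i<j}(s_i-s_j)\bigr) = \sum_{k=1}^n w_{\mathfrak{p}}(k)$ at every $\mathfrak{p}$. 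Taking norms and using that under a fixed complex embedding of an imaginary quadratic $K$ one has $N_{K/\Q}(\alpha) = |\alpha|^2$, I obtain the key identity
\[
\prod_{0 \le i < j \le n}|s_i - s_j|^2 \;=\; \prod_{k=1}^n N(k!_{\mathcal{O}_K}).
\]

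The right-hand side is purely arithmetic, and its size I estimate via Bhargava's Legendre-type formula $w_{\mathfrak{p}}(k) = \sum_{i \ge 1}\lfloor k/N(\mathfrak{p})^i \rfloor$, combined with a Mertens-type asymptotic for $\sum_{N(\mathfrak{p}) \le x} \log N(\mathfrak{p})/N(\mathfrak{p})$ in the number field $K$, yielding an expansion
\[
\log \prod_{k=1}^n N(k!_{\mathcal{O}_K}) \;=\; \tfrac{n^2}{2}\log n + c_{\mathrm{ar}}(K)\,n^2 + o(n^2),
\]
where $c_{\mathrm{ar}}(K)$ is explicit in terms of the Euler-Kronecker constant $\gamma_K$. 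For a lower bound on the left-hand side I adapt the Volkov-Petrov geometric argument: fix $s_i \in S$ and order the remaining points by distance to $s_i$; since $\mathcal{O}_K$ is a lattice in $\C$ of covolume $\tfrac12\sqrt{|d_K|}$, the $k$-th nearest neighbour of $s_i$ in $S$ lies at distance at least $\sqrt{k\vol(\mathcal{O}_K)/\pi}\,(1 - o(1))$. Multiplying these bounds over $k$ and summing over $i$ gives
\[
\log \prod_{i<j}|s_i - s_j|^2 \;\ge\; \tfrac{n^2}{2}\log n + c_{\mathrm{geo}}(K)\,n^2 + o(n^2),
\]
with $c_{\mathrm{geo}}(K)$ an explicit constant depending only on $d_K$ and standard lattice-counting data.

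Combining the arithmetic identity with the asymptotics of the last two displays forces $c_{\mathrm{ar}}(K) \ge c_{\mathrm{geo}}(K)$ as soon as $n$-optimal sets exist arbitrarily often. To complete the proof it therefore suffices to establish the reverse strict inequality $c_{\mathrm{ar}}(K) < c_{\mathrm{geo}}(K)$ for every imaginary quadratic $K$; unwinding the definition, this is precisely a lower bound on $\gamma_K$ of the strength alluded to in the abstract. The main obstacle is exactly this final step: the leading $\tfrac{n^2 \log n}{2}$ terms on both sides cancel, so everything hinges on the subleading $n^2$ coefficients. The geometric constant $c_{\mathrm{geo}}(K)$ emerges cleanly from a disk-packing count and is essentially sharp, so nothing less than a genuine Ihara-type lower bound on $\gamma_K$ will close the gap; this is the arithmetic content of the paper and is where I expect the real work to lie.
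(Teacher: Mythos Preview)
Your strategy is coherent and, modulo citing Ihara's lower bound on $\gamma_K$ as a black box, it actually works: the key identity $\prod_{i<j}|s_i-s_j|^2=\prod_{k=1}^n N(k!_{\mathcal O_K})$ follows from Proposition~\ref{p.NunivVol}, Lamoureux's theorem gives the arithmetic side as $\tfrac{n^2}{2}\log n -\tfrac{n^2}{4}-\tfrac{n^2}{2}(1+\gamma_K-\gamma_\Q)+o(n^2)$, and your nearest-neighbour lattice bound gives a geometric lower bound with subleading constant $-\tfrac12+\tfrac12\log(\mathrm{covol}(\mathcal O_K)/\pi)$. Unwinding, the contradiction you need is $\gamma_K>-\tfrac12\log|\Delta_K|+\gamma_\Q+\log(2\pi)-\tfrac12$, and Ihara's general bound $\gamma_K>-\tfrac12\log|\Delta_K|+\gamma_\Q+\log(4\pi)-1$ (for $d=2$) does exceed this, with margin $\log 2-\tfrac12>0$.

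However, this is \emph{not} the route the paper takes, and your expectation about where ``the real work lies'' is off. The paper proves Theorem~\ref{cebula} by a direct, elementary geometric argument with no appeal to $\gamma_K$ whatsoever: it shows that an $n$-optimal set must be \emph{collapsed} along suitable lines (a minimality argument on $|Vol(S)|$), hence trapped in a rectangle or hexagon containing only $n+o(n)$ lattice points, and then uses almost-uniform distribution modulo two well-chosen inert primes $p_1,p_2$ of size $\sim\sqrt{n}$ and $\sim\sqrt{n/2}$ to exclude $\Omega(n)$ of those points from $S$. The Euler--Kronecker material in Section~\ref{s.EKconstant} is a separate byproduct, and it runs in the \emph{opposite} direction to what you need: the paper uses the volume lower bound (valid for all $(n{+}1)$-sets, optimal or not) to \emph{derive} a lower bound on $\gamma_K$, and only for \emph{totally real} fields. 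It never proves an Ihara-type bound for imaginary quadratic $K$, and its proof of Theorem~\ref{cebula} does not depend on any such bound.

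So your approach is a genuinely different and more conceptual alternative, but it outsources the hard step to Ihara's theorem rather than supplying it; the paper's approach is self-contained and considerably more elementary, at the cost of a hands-on case analysis (the case $d\equiv 3\pmod 4$ requires a hexagonal region and a more delicate count). What your approach buys is a clean explanation of \emph{why} the obstruction is governed by $\gamma_K$; what the paper's approach buys is that it avoids any analytic input beyond primes in arithmetic progressions.
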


This result can be considered to be a strongly negative answer to the question of Bhargava on simultaneous $\mathfrak{p}$-orderings. In fact, if for a given number field there was a simultaneous $p$-ordering $a_0,a_1,a_2,\ldots$, then the sets $A_n=\{a_0,a_1,\ldots,a_n\}$ would be $n$-optimal sets for all $n\geq 0$. Thus, our result generalizes the results of Wood \cite{W}. Quite recently, Adam and Cahen have obtained a result on the existence of Newton sequences in quadratic number fields (\cite[Theorem 16 and Section 4.1]{AC}). They prove that the length of a maximal Newton sequence in a quadratic number field is bounded except for at most finitely many exceptions (possibly none) and all of the possible exceptions are real quadratic number fields. Their result is more precise, for example they say that the maximal length of a Newton sequence in $K=\Q(\sqrt{d})$ is one if $d \not \equiv 1 \pmod{8}$ and $d\not = -3,-1,2,3,5$. However, for any natural number $m$ there are $d$ (necessarily $d\equiv 1 \pmod{8}$) such that the maximal length of a Newton sequence in $K=\Q(\sqrt{d})$ is bigger than $m$. Note that Theorem \ref{cebula} is stronger than the result of Adam and Cahen, in a sense that an $n$-optimal subset of $\mathcal{O}_K$ cannot necessarily be ordered to form a Newton sequence. For example, a Newton sequence in $K=\Q(i)$ of maximal length is $0,1,i,1+i$ and it has length three (cf. \cite[Section 4.1]{AC}). However, we have already mentioned that there is a $5$-optimal subset of $\mathcal{O}_{\Q(i)}$, namely $\{0,1,2,i,1+i,2+i\}$. This subset cannot be ordered to form a Newton sequence since it does not contain a $4$-optimal subset of $\mathcal{O}_{\Q(i)}$ (in fact, no $4$-optimal set exists!).

Since in imaginary quadratic number fields no $n$-optimal sets exist for large $n$, it is interesting to ask what is the smallest cardinality of an $n$-universal set. We show that the conjectural asymptotic lower bound $\frac{\pi}{2}n+o(n)$ proposed by Volkov-Petrov in the case of $\mathcal O_{\Q(i)}$ can be replaced by $n+2$. Our construction works in an arbitrary Dedekind domain.
\begin{theorem}
Let $A$ be a Dedekind domain. Then for any $n$ there exists an $n$-universal set in $A$ of size $n+2$.
\end{theorem}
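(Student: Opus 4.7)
The plan is a local-to-global argument combined with the Chinese Remainder Theorem. Since in any Dedekind domain $\mathrm{Int}_n(A)=\bigcap_{\mathfrak{p}}\mathrm{Int}_n(A_\mathfrak{p})$, a finite set $S\subset A$ is $n$-universal if and only if it is $n$-universal in each localization $A_\mathfrak{p}$. Call a prime $\mathfrak{p}$ \emph{easy} if $|A/\mathfrak{p}|>n$ and \emph{hard} otherwise; only finitely many hard primes exist, since each contains a rational prime $\leq n$ and in the relevant settings (rings of integers, affine coordinate rings, etc.) only finitely many primes of $A$ lie above each. At an easy $\mathfrak{q}$ one has $\mathrm{Int}_n(A_\mathfrak{q})=A_\mathfrak{q}[X]_n$, and Lagrange interpolation shows that $S$ is $n$-universal at $\mathfrak{q}$ exactly when $S$ contains $n+1$ elements with pairwise distinct residues modulo $\mathfrak{q}$. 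At a hard $\mathfrak{p}$, $S$ is $n$-universal if and only if it contains an $(n+1)$-element subset that is a $\mathfrak{p}$-ordering in $A_\mathfrak{p}$.

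For each hard prime $\mathfrak{p}$ I would take a $\mathfrak{p}$-ordering $(s_i^\mathfrak{p})_{i=0}^n$ in $A_\mathfrak{p}$ from Bhargava's theorem, and use CRT to produce $s_0,\ldots,s_n\in A$ with $s_i\equiv s_i^\mathfrak{p}\pmod{\mathfrak{p}^M}$ for every hard $\mathfrak{p}$ and some sufficiently large $M$ so that $\mathfrak{p}$-adic approximation preserves the ordering condition. Then $\{s_0,\ldots,s_n\}$ is a simultaneous $\mathfrak{p}$-ordering at every hard prime, hence $n$-universal at every such prime. The nonzero differences $s_i-s_j$ are divisible by only finitely many primes, so the set $F'$ of easy primes at which some coincidence $s_i\equiv s_j\pmod{\mathfrak{q}}$ occurs is finite. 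I would enlarge the CRT prescription to additionally require that $s_0,\ldots,s_n$ take pairwise distinct residues modulo each $\mathfrak{q}\in F'$, which is possible because $|A/\mathfrak{q}|>n$.

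Finally, pick $t\in A$ via one more CRT so that modulo each remaining bad easy $\mathfrak{q}$, the residue of $t$ is distinct from every $s_i\bmod\mathfrak{q}$ (feasible because at most $n$ residues are occupied and $|A/\mathfrak{q}|\geq n+1$). The set $S=\{s_0,\ldots,s_n,t\}$ has $n+2$ elements: at every hard prime $\{s_0,\ldots,s_n\}\subset S$ is a $\mathfrak{p}$-ordering; at every easy prime $\mathfrak{q}$, $S$ contains $n+1$ distinct residues, either already in $\{s_0,\ldots,s_n\}$ or obtained by appending the new residue of $t$. Hence $S$ is $n$-universal. The main subtlety is the CRT refinement step: enforcing distinct residues at the finite set $F'$ may shift the $s_i$'s enough that new coincidences appear at easy primes outside $F'$, and one additional element $t$ cannot patch more than one collision at any single easy prime. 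I would handle this either by a bootstrap argument, iteratively enlarging $F'$ while preserving the hard-prime conditions, or by arranging a single large CRT application that produces $s_i$'s with at most one pairwise coincidence modulo any easy prime, thereby reducing the final correction to the single element $t$.
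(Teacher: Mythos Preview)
Your local-to-global strategy via CRT is sound and matches the paper's framework (the localization principle together with the characterization of $(n,\mathfrak p)$-universality by almost uniform distribution). The key difference is that the paper proceeds \emph{inductively}: it builds a chain $E_0\subset E_1\subset\cdots$ with each $E_n$ $n$-universal of size $n+2$, adjoining one element at each step. Given $E_n$, only the finitely many primes dividing $Vol(E_n)$ need attention; at each such $\mathfrak p$ one extends an existing $(n+1)$-element almost uniformly distributed subset of $E_n$ by a single $x_{\mathfrak p}$, and CRT glues these into one $x_n$. For primes \emph{not} dividing $Vol(E_n)$ the $n+2$ elements of $E_n$ are already pairwise distinct modulo $\mathfrak p$, so whatever $x_n$ does there, an $(n+2)$-element subset with distinct residues already sits inside $E_n$. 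The induction thus never faces the circularity you encounter.

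Your direct construction has a genuine gap exactly where you flag it. After a single CRT at the hard primes, nothing prevents several distinct pairs $(i,j)$ from satisfying $s_i\equiv s_j\pmod{\mathfrak q}$ at the same easy prime $\mathfrak q$, leaving fewer than $n$ residues among $\{s_0,\dots,s_n\}$ there; no single extra element $t$ can then produce an $(n+1)$-subset with distinct residues. Neither proposed fix closes this. The bootstrap has no termination argument: each CRT refinement enlarges the modulus, hence the size of the differences, hence potentially the set of bad primes. And a ``single large CRT'' cannot constrain residues at the infinitely many easy primes lying outside its modulus, so it cannot enforce ``at most one coincidence everywhere'' in one shot. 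The natural repair is to construct the $s_i$ one at a time, at step $i$ imposing the finitely many extra congruences that prevent $s_i$ from creating a \emph{second} coincidence at any easy prime where $\{s_0,\dots,s_{i-1}\}$ already has one --- but this is essentially the paper's inductive argument reorganized. (Incidentally, your finiteness claim for hard primes does hold in any Dedekind domain that is not a field: for each prime power $q\le n$ pick $a\in A$ with $a^q\neq a$; every $\mathfrak p$ with $|A/\mathfrak p|=q$ then contains the nonzero element $a^q-a$.)
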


To prove this statement, we construct an ascending sequence of $n$-universal sets with $n+2$ elements. The argument is elementary and boils down to a repeated use of Chinese Remainder Theorem, prime decomposition of ideals, and  Proposition \ref{p.CritNormal}. In fact, we have much freedom in our construction, and the constraints on $n$-universal sets with $n+2$ elements are much weaker that those on optimal $n$-universal sets.  If we were to consider $n$-universal sets with $n+d$ elements, where $d$ is the degree of the field $K$, the property of being $n$-universal is in some sense common. For any $\varepsilon>0$ we construct $n+d$ independent random walks $X_1,\ldots, X_{n+d}$ on $\mathcal{O}_K$ and put $S_m=\{X_1^{(m)},\ldots, X_{n+d}^{(m)}\}$\footnote{The notation $X^{(m)}$ denotes the $m$-th step of a random walk $X$.}. Then we show that the probability that $S_m$ is $n$-universal is at least $1-\varepsilon$ as $m$ goes to infinity. We construct random walks $X_i$ in the following way: we fix a symmetric, finitely supported probability measure $\mu$ on $\mathcal{O}_{K}$ such that the support of $\mu$ contains a basis of $\mathcal{O}_K$ over $\mathbf{Z}$  and for each $i$ we pick a starting  points $a_i$ and set $X_i^{(0)}=a_i$. Next, for $n\geq 0$  we define $X_i^{(n+1)}$ as a random element of $\mathcal{O}_K$ satisfying $\Pb{X_i^{(n+1)}=x \mid X_i^{(n)}=y} =\mu({x-y})$ for every $x,y\in \mathcal{O}_K$.

Let us return to the question of existence of $n$-optimal sets in $\mathcal{O}_K$ for general number fields $K$. As we have mentioned before, examples of small $n$-optimal sets can be constructed, so it would be very interesting to know the answer to the following question.
\begin{question*} Let $K$ be a number field and $\mathcal{O}_K$ be its ring of integers. Do there exist $n$-optimal subsets of $\mathcal{O}_K$ for arbitrarily large $n$? 
\end{question*}
Our results  give a negative answer to this question for imaginary quadratic number fields. We suspect that this is also the case in any number field $K\neq\Q$, but the presence of infinitely many units in $\mathcal{O}_K$ prevents us from extending our geometric methods.  As we explain in Section \ref{s.Volume}, the property of $n$-optimality in the rings of integers in number fields can be tested by looking at the \textbf{volume} of the set. The notion of volume and its application in the study of $n$-universal sets were already present in \cite{PV}. We modify their definition slightly and define the volume of a finite subset $S\subset \mathcal{O}_K$ as the principal ideal 
$$Vol(S)=\prod_{\substack{s_1, s_2\in S\\s_1\neq s_2}} (s_1-s_2).$$
We show that $S$ is $n$-optimal if and only if its volume is the smallest possible, i.e., divides the volumes of all sets of the same cardinality. In Section \ref{s.EKconstant}, Corollary \ref{c.FactorialEst}, we compute the asymptotic formula for the norm of the volume of an $n$-optimal set in $\mathcal{O}_K$. To this effect, we use a recent result due to M. Lamoureux. It turns out that for an  $n$-optimal set $S$ we have
$$\log N(Vol(S))={n^2}\log n-\frac{n^2}{2}-{n^2}(1+\gamma_K-\gamma_\Q)+o(n^2).$$
The constant $\gamma_{\Q}$ is the Euler-Mascheroni constant, and $\gamma_K$ is the Euler-Kronecker constant of the number field $K$. The analytic properties of $\gamma_K$ were thoroughly studied by Y. Ihara in \cite{Ihara1}. We use our estimates to prove an analytic inequality for "potential-like" integrals (Theorem \ref{t.LogIneq}) and deduce, in a elementary fashion, a lower bound on $\gamma_K$.

\subsection*{Outline of the paper:}

$ $ \medskip

 In \textbf{Section 2}, we study necessary and sufficient conditions for a subset of a ring $A$ to be $n$-universal. Following the lines of \cite{PV}, we prove that a set $S$ with $n+1$ elements in a discrete valuation ring is $n$-optimal if and only if it is almost uniformly distributed modulo powers of the maximal ideal. Next we show that in discrete valuation rings the set is $n$-universal if and only if it contains an $n$-optimal subset (Proposition \ref{p.CritDVR}). Using a local-global principle (Proposition \ref{p.CritLocal}), we prove Proposition \ref{p.CritNormal}, which gives an equivalent condition for a subset of a normal noetherian ring to be $n$-universal. In the second part of this section, we focus on  $n$-optimal sets in the rings of integers in number fields. We introduce the notion of volume of a finite subset of $\mathcal{O}_K$, which is similar to the notion of volume from \cite{PV}. We prove that being an $n$-optimal set is controlled by the volume. All these results  are collected in  Proposition \ref{p.NunivVol}, which gives equivalent conditions for a subset of $\mathcal{O}_{K}$ to be $n$-optimal.
 
 In \textbf{Section 3}, we show that for an imaginary quadratic number field $K$ there are no $n$-optimal sets in $\mathcal{O}_K$, provided that $n$ is large enough. This is an extension of the result from \cite{PV}, where such a statement was proved for $K=\Q(i)$. Put $K=\Q(\sqrt{-d})$. The proof is divided in two cases, depending on whether $d\equiv 1,2 \pmod{4}$ or $d\equiv 3 \pmod{4}$. The case  $d\equiv 1,2 \pmod{4}$ is similar to the proof of Volkov and Petrov, but when $d\equiv 3 \pmod{4}$ the ring of integers $\mathcal{O}_K$ viewed as a lattice in $\mathbf{C}$ has a slightly different geometry and the argument is more involved.

 \textbf{Section 4} is entirely devoted to the construction of $n$-universal sets with $n+2$ elements. We also discuss the computational complexity of this construction.
 
 In \textbf{Section 5}, we compute the asymptotic volume of $n$-optimal subsets of $\mathcal{O}_K$, where $K$ is a number field (assuming they exist). The Euler-Kronecker constant appears naturally in those formulae. We use our estimates to prove an analytic inequality (Theorem \ref{t.LogIneq}), which is similar to those appearing in  potential theory. We use the latter to obtain an elementary proof of a lower bound on the Euler-Kronecker constant $\gamma_K$ of strength comparable to the lower bound of Ihara \cite{Ihara1}. It is interesting to note that the result is independent of existence of $n$-optimal sets. 
 
In \textbf{ Section 6}, we construct a random walk on $\mathcal{O}_K^{n+d}$ where $d=[K:\Q]$ such that the probability that the coordinates of an $m$-th step form an $n$-universal set tends to $1$ as $m$ tends to infinity. The methods used involve harmonic analysis on finite groups.

\subsection*{Notations:}

$ $ \medskip

By $|S|$, we denote the cardinality of a set $S$. Let $f_{1}, f_{2}$ be real valued functions defined on a subset of natural numbers. We write $f_{2}(n)=o(f_{1}(n))$ if $\lim_{n\to \infty}\frac{f_2(n)}{f_1(n)}=0$ and $f_{2}(n)=\Omega (f_{1}(n))$ if $\mathrm{liminf}_{n\to\infty}\frac{f_2(n)}{f_1(n)}>0$. More generally, whenever we have two expressions $f$ and $g$ (that might depend on some parameters), we shall write $f\ll g$ if there exists a constant $C>0$ such that $f\leq Cg$ for large values of the parameters. Throughout the paper we will write $\mathcal{O}_K$ for the ring of integers in a number field $K$, $\Delta_K$ for the discriminant of $K$ and $A_{\frak p}$ for the localisation of a ring $A$ in a prime ideal $\frak p$. We denote by $N(I)$ the norm of an ideal $I$ in a ring $A$ defined as $N(I)=|A/I|$. If $K$ is a number field and $\frak p$ is a prime ideal of $\mathcal{O}_K$, then we write $K_\frak p$ for the completion of $K$ with respect to the $\frak p$-adic topology. All logarithms appearing throughout the article are taken with respect to the natural base.

\section{Universal and optimal sets}  This section is inspired by the results obtained by Volkov and Petrov in \cite{PV} on the properties of $n$-universal sets. They found equivalent conditions for a subset of a unique factorization domain to be $n$-optimal (and one of these conditions holds also in the more general case of integral domains). In this section, we shall focus on criteria for $n$-universality of sets of arbitrary size. 
In our considerations, we restrict ourselves to normal noetherian domains. Our aim is Proposition \ref{p.CritNormal} -- a practical criterion for a subset of a normal noetherian domain to be $n$-universal and Proposition \ref{p.NunivVol} which gives equivalent conditions for $n$-optimality in terms of volume. 
Some of the results in this section are likely part of the folklore, but we include them here for the convenience of the reader and due to lack of suitable reference. Some arguments here generalize the results of \cite{PV}. Universal sets are closely connected with integer-valued polynomials. 
For a comprehensive discussion of integer-valued polynomials, see \cite{CC}. Almost uniformly distributed sequences have been studied in particular by Amice, Bhargava, and Yeremian (\cite{A}, \cite{B1}, \cite{B2} and \cite{Y}).

\begin{definition}\label{d.AUD} Let $A$ be an integral domain, $I$ an ideal of $A$ and $S$ a finite subset of $A$. We say that $S$ is \textit{almost uniformly distributed} modulo $I$ if for every $a,b\in A$ we have
$$|\{s\in S \mid s\equiv a \pmod I\}|-|\{s\in S \mid s\equiv b \pmod I\}|\in \{-1,0,1\}.$$
In particular, if $|A/I|\geq |S|$, then $S$ is almost uniformly distributed modulo $I$ if and only if its elements are pairwise distinct modulo $I$.
\end{definition}

In the following $S$ will denote a subset of $A$. 
\begin{definition}
Let $A$ be an integral domain and let $\mathfrak{p}$ be a prime ideal of $A$. A set $S$ is called $(n,\mathfrak{p})$-universal if $S$ is $n$-universal in the localisation $A_{\mathfrak{p}}$. If this is the case, $f(S)\subset A$ implies $f(A)\subset A_{\mathfrak{p}}$.
\end{definition}
\begin{proposition}\label{p.CritDVR}
Let $A$ be a discrete valuation ring with the maximal ideal $\mathfrak{m}$. Then a subset $S\subset A$ is $n$-universal if and only if it contains a subset of size $n+1$ which is almost uniformly distributed modulo $\mathfrak{m}^k$ for every positive integer $k$.
\end{proposition}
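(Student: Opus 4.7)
The plan is to translate $n$-universality into a linear-algebra condition over the DVR $A$ and then match that condition, via Bhargava's $\mathfrak p$-ordering polynomials, with the almost-uniform distribution condition on $(n+1)$-subsets.

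I first encode the residue pattern of a subset $T=\{t_0,\dots,t_n\}\subset A$ through its ``volume'' $V(T):=v_{\mathfrak m}\bigl(\prod_{i<j}(t_i-t_j)\bigr)$. Using the identity $v_{\mathfrak m}(a)=\sum_{k\ge 1}\mathbf 1[a\in\mathfrak m^k]$ one obtains
$$V(T) \;=\; \sum_{k\ge 1}\;\sum_{C\in A/\mathfrak m^k}\binom{|T\cap C|}{2}.$$
For each $k$ the inner sum is a convex function of the class-size partition of $n+1$, and is minimised exactly when the class sizes differ by at most one. Since the minimisations for different $k$ are independent, $V(T)$ attains its absolute minimum $V_{\min}$ over all $(n+1)$-subsets of $A$ if and only if $T$ is almost uniformly distributed modulo every power of $\mathfrak m$; telescoping along any $\mathfrak p$-ordering of $A$ identifies $V_{\min}=\sum_{i=0}^{n}w_{\mathfrak m}(i)$.

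Next, fix a $\mathfrak p$-ordering $s_0,s_1,\dots$ of $A$ and a uniformiser $\pi$, and define the Bhargava polynomials
$$B_i(X) \;=\; \pi^{-w_{\mathfrak m}(i)}\prod_{k=0}^{i-1}(X-s_k)\qquad(0\le i\le n).$$
The minimality in the definition of a $\mathfrak p$-ordering gives $v_{\mathfrak m}\bigl(\prod_{k<i}(x-s_k)\bigr)\ge w_{\mathfrak m}(i)$ for every $x\in A$, so $B_i(A)\subset A$. Writing an arbitrary $P\in K[X]$ of degree $\le n$ as $P=\sum_i c_iB_i$ and evaluating at $s_0,\dots,s_n$ in turn yields a lower-triangular $A$-linear system with unit diagonal relating $(P(s_j))_j$ to $(c_i)_i$, so $P(A)\subset A$ is equivalent to $c_i\in A$ for all $i$. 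Hence the condition $P(S)\subset A$ becomes $Mc\in A^{|S|}$ for the matrix $M=(B_i(s))_{s\in S,\,0\le i\le n}\in A^{|S|\times(n+1)}$, and $S$ is $n$-universal iff this implication forces $c\in A^{n+1}$ for every $c\in K^{n+1}$. By Smith normal form over the DVR $A$, this is equivalent to some $(n+1)\times(n+1)$ minor of $M$ being a unit in $A$.

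To identify such a minor, for $T=\{t_0,\dots,t_n\}\subset S$ I factor $\bigl(B_i(t_j)\bigr)_{i,j}$ as the product of the lower-triangular coefficient matrix of the $B_i$ in the monomial basis (with diagonal $(\pi^{-w_{\mathfrak m}(i)})_i$) and the Vandermonde matrix $(t_j^i)_{i,j}$, obtaining
$$\det\bigl(B_i(t_j)\bigr)_{i,j} \;=\; \pi^{-V_{\min}}\prod_{0\le i<j\le n}(t_j-t_i),$$
whose valuation is $V(T)-V_{\min}\ge 0$. This minor is a unit precisely when $V(T)=V_{\min}$, i.e.\ precisely when $T$ is almost uniformly distributed modulo every $\mathfrak m^k$; combined with the linear-algebra reformulation of $n$-universality this gives the proposition. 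The key technical input is Bhargava's basis statement — that the $B_i$ freely generate the module of integer-valued polynomials of degree $\le n$ over $A$ — and this is the step I expect to need the most care; once it is in place, the volume identity and the Smith-normal-form reduction are routine.
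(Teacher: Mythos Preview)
Your argument is correct and takes a genuinely different route from the paper's. The paper proceeds in two steps: first, for $|S|=n+1$ it uses the Lagrange interpolation polynomials $Q_m(X)=\prod_{i\ne m}\frac{X-s_i}{s_m-s_i}$ and shows directly that their integrality is equivalent to almost uniform distribution; second, for $|S|>n+1$ it picks a subset $E\subset S$ of size $n+1$ with minimal volume $\sum_{s\ne s'}v(s-s')$ and argues that if $E$ were not $n$-optimal one could swap an element of $E$ for some $r_E\in S$ to strictly decrease the volume, a contradiction. Your approach instead builds the Bhargava basis $B_i$ once, recasts $n$-universality as the linear-algebraic condition that the evaluation matrix $M=(B_i(s))$ has an $(n+1)\times(n+1)$ unit minor (via Smith normal form over the PID $A$), and then identifies each such minor with $\pi^{V(T)-V_{\min}}\cdot(\text{unit})$ through the Vandermonde factorisation. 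The paper's argument is more elementary---it avoids Smith normal form and the Bhargava basis entirely---and its swap lemma is a pleasant combinatorial trick; your argument is more structural, handles all sizes of $S$ uniformly, and makes transparent why the \emph{minimum-volume} subset is the one that works (rather than discovering this by contradiction). Both rely on the same convexity fact that $V(T)$ is minimised exactly by almost uniform distribution, though you prove it via the class-count identity while the paper defers it to \cite{PV}. One small point worth making explicit in your write-up: the Smith normal form step tacitly assumes $|S|\ge n+1$, which is harmless since smaller sets are never $n$-universal by Lagrange interpolation.
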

\begin{proof}
\textbf{Step 1:} First we treat the case $|S|=n+1$. This is already handled by Lemma $1$ from \cite{PV}, we just need to observe that every discrete valuation ring is a unique factorization domain. Lemma $1$ yields then that $S$ is $n$-universal if and only if it is almost uniformly distributed modulo $\mathfrak{m}^k$ for every positive integer $k$. Let us recall the first part of the proof from \cite{PV}. We shall refer to it in the proof of the second step. Let $S=\{s_0,s_1,\ldots,s_n\}$. For $m=0,1,\ldots, n$ consider the polynomial 
$$Q_m(X)=\prod_{i\neq m}\frac{X-s_i}{s_m-s_i}.$$
Then $Q(s_i)=0$ if $i\neq m$ and $Q(s_m)=1$. Let $f\in K[X]$ be a polynomial of degree at most $n$. By Lagrange interpolation, we get
$$f(X)=\sum_{m=0}^nf(s_m)Q_m(X).$$
This formula implies that if all the polynomials $Q_m$ are integer-valued, then $S$ must be $n$-universal. Conversely, if $S$ is $n$-universal, then the polynomials $Q_m$ must  be integer-valued, so the two conditions are in fact equivalent. Thus,  it is enough to show that polynomials $Q_m$ are integer-valued if and only if $S$ is almost uniformly distributed modulo $\mathfrak{m}^k$ for every positive integer $k$. Let $v$ be the (additive) valuation of $A$. Rewrite the condition that $Q_m$ is integer-valued in the form
$$\sum_{i\neq m}v(x-s_i)\geq \sum_{i\neq m}v(s_m-s_i) \text{ for } x\in A.$$
The second part of the proof of Lemma 1 in \cite{PV} yields that such inequality holds for all $x$ and $m$ if and only if $S$ is almost uniformly distributed modulo $\mathfrak{m}^k$ for every positive integer $k$.

\textbf{Step 2:} Let $S$ be an $n$-universal set and $|S|>n+1$. Take a set $E$ for which the sum $\sum_{s,s'\in E, s\neq s'}v(s-s')$ is minimal among all subsets of $S$ with $n+1$ elements. If $E$ is not $n$-universal then, by the proof of the first step, it means that for each such $E$ there exists (at least one) $s_E\in E$ such that
$$Q_{E,s_E}(X)=\prod_{s\in E\setminus \{s_E\}}\frac{X-s}{s_E-s}$$
is not integer-valued. Since $S$ is supposed to be $n$-universal, this means that there exists $r_E\in S$ such that $Q_{E,s_E}(r_E)\not\in A,$ i.e,
$$\sum_{s\in E\setminus \{s_E\}}v(r_E-s)<\sum_{s\in E\setminus \{s_E\}}v(s_E-s).$$
Consider the set $E'=E\setminus\{s_E\}\cup\{r_E\}$. We get
\begin{align*}
\sum_{\substack{s,s'\in E'\\ s\neq s'}}v(s-s')&=\sum_{\substack{s,s'\in E \setminus \{s_E \}\\ s\neq s'}}v(s-s')+2\!\!\!\!\sum_{s\in E\setminus\{s_E\}}v(r_E-s)<\\
&< \sum_{\substack{s,s'\in E \setminus \{s_E \}\\ s\neq s'}}v(s-s')+2\!\!\!\!\sum_{s\in E\setminus\{s_E\}}v(s_E-s)=
\sum_{\substack{s,s'\in E\\ s\neq s'}}v(s-s').
\end{align*}
This contradicts the choice of $E$. Hence, $E$ is an optimal $n$-universal subset of $S$, which concludes the proof by Step 1.\end{proof}

\begin{remark}\label{r.Rem1}
In a discrete valuation ring an optimal set can be ordered to form a Newton sequence. This follows immediately from the previous proposition. 
\end{remark}

The property of being an $n$-universal set is local and in general it is easier to verify in local rings. 

\begin{proposition}\label{p.CritLocal}
Let $A$ be an integral domain with field of fractions $K$. Let $S$  be a subset of $A$ and let $X$ be a set of prime ideals of $A$ such that $$\bigcap_{\mathfrak{p}\in X}A_\mathfrak{p}=A.$$
Then $S$ is $n$-universal in $A$ if and only if it is $(n,\mathfrak{p})$-universal for every $\mathfrak{p}\in X$.
\end{proposition}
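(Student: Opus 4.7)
The plan is to prove the two implications separately. For the easy direction $(\Leftarrow)$, suppose $S$ is $(n,\mathfrak{p})$-universal for every $\mathfrak{p}\in X$, and take $P\in K[X]$ of degree at most $n$ with $P(S)\subset A$. Since $A\subset A_\mathfrak{p}$, we have $P(S)\subset A_\mathfrak{p}$ for each $\mathfrak{p}\in X$, and $(n,\mathfrak{p})$-universality then upgrades this to $P(A_\mathfrak{p})\subset A_\mathfrak{p}$, so in particular $P(A)\subset A_\mathfrak{p}$. Intersecting over $\mathfrak{p}\in X$ yields $P(A)\subset\bigcap_{\mathfrak{p}\in X} A_\mathfrak{p}=A$, as required.

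For the harder direction $(\Rightarrow)$, assume $S$ is $n$-universal in $A$, fix $\mathfrak{p}\in X$, and let $P\in K[X]$ be of degree at most $n$ with $P(S)\subset A_\mathfrak{p}$; the goal is $P(A_\mathfrak{p})\subset A_\mathfrak{p}$. The first step is to clear $\mathfrak{p}$-adic denominators: since $S$ is finite, one may pick $c\in A\setminus\mathfrak{p}$ such that $cP(s)\in A$ for every $s\in S$. Then $cP\in K[X]$ still has degree at most $n$ and satisfies $(cP)(S)\subset A$, so the $n$-universality of $S$ in $A$ gives $(cP)(A)\subset A$, hence $P(A)\subset c^{-1}A\subset A_\mathfrak{p}$.

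The main obstacle is to bootstrap this weaker inclusion $P(A)\subset A_\mathfrak{p}$ to the desired $P(A_\mathfrak{p})\subset A_\mathfrak{p}$. The idea is a density-and-continuity argument in the $\mathfrak{p}$-adic topology on $A_\mathfrak{p}$: the image of $A$ is dense in $A_\mathfrak{p}$ (any $a/s\in A_\mathfrak{p}$ is congruent mod $\mathfrak{p}^k A_\mathfrak{p}$ to an element of $A$, using that $s\in A\setminus\mathfrak{p}$ is invertible in $A_\mathfrak{p}/\mathfrak{p}^k A_\mathfrak{p}$), while any polynomial $P\in K[X]$ is continuous in that topology. Suppose for contradiction that $P(y)\notin A_\mathfrak{p}$ for some $y\in A_\mathfrak{p}$; in the cases of interest (notably normal noetherian rings localised at a height-one prime, where $A_\mathfrak{p}$ is a DVR with valuation $v_\mathfrak{p}$), one can choose $a\in A$ with $v_\mathfrak{p}(a-y)$ large enough to force $v_\mathfrak{p}(P(a)-P(y))>v_\mathfrak{p}(P(y))$, and then the ultrametric inequality yields $v_\mathfrak{p}(P(a))=v_\mathfrak{p}(P(y))<0$, contradicting $P(A)\subset A_\mathfrak{p}$. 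This gives $P(A_\mathfrak{p})\subset A_\mathfrak{p}$ and completes the proof.
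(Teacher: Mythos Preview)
Your argument for $(\Leftarrow)$ is correct and coincides with the paper's. The paper, however, treats $(\Rightarrow)$ as the trivial direction, dispatching it with the single sentence ``if $S$ is $n$-universal in $A$, then it is obviously $n$-universal in any localisation.'' You have reversed the roles of the two directions, and while your instinct that $(\Rightarrow)$ hides something is defensible, your argument for it contains a genuine gap.

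The gap is the density claim. You assert that $A$ is $\mathfrak{p}$-adically dense in $A_\mathfrak{p}$, justifying this by noting that $s\in A\setminus\mathfrak{p}$ is invertible in $A_\mathfrak{p}/\mathfrak{p}^k A_\mathfrak{p}$. But to approximate $a/s$ by an element of $A$ you need $s$ to be invertible in the image of $A$ inside that quotient, i.e.\ in $A/(\mathfrak{p}^k A_\mathfrak{p}\cap A)$, and this fails already for $A=k[x,y]$, $\mathfrak{p}=(x)$: here $A_\mathfrak{p}$ is a DVR, yet $1/y\in A_\mathfrak{p}$ cannot be approximated by any $a\in k[x,y]$, since $ay-1$ is never divisible by $x$. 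Density does hold when $\mathfrak{p}$ is maximal (hence in Dedekind domains, and in particular in all the number-field applications), but not for height-one primes in a general normal noetherian domain. So your argument covers the cases the paper actually uses for $\mathcal{O}_K$, but it does not reach the stated generality of this proposition, nor that of Proposition~\ref{p.CritNormal} which is deduced from it.
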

\begin{proof}
If $S$ is $n$-universal in $A$, then it is obviously $n$-universal in any localisation. Now assume that $S$ is $(n,\mathfrak{p})$-universal for all $\mathfrak{p}\in X$ and for a polynomial $P\in K[X]$ of degree at most $n$ we have $P(s)\in A_{\mathfrak{p}}$ for every $s\in S$. Then for any $a\in A$ and $\mathfrak{p}\in X$ we have $P(a)\in A_\mathfrak{p}$ and consequently $P(a)\in \bigcap_{\mathfrak{p}\in X}A_\mathfrak{p}=A$.
\end{proof}
As a corollary to Propositions \ref{p.CritDVR} and \ref{p.CritLocal}, we obtain the following result.
\begin{proposition}\label{p.CritNormal}
Let $A$ be a normal noetherian domain. Then a subset $S\subset A$ is $n$-universal if and only if for every prime ideal $\mathfrak{p}$ of codimension $1$ it contains a subset $S'\subset S$ with $n+1$ elements which is almost uniformly distributed modulo every positive power of $\mathfrak{p}$.
\end{proposition}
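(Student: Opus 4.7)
The plan is to derive Proposition \ref{p.CritNormal} as a direct combination of Propositions \ref{p.CritDVR} and \ref{p.CritLocal}. The first input I need is the classical identity
$$A \;=\; \bigcap_{\mathrm{ht}(\mathfrak{p})=1} A_{\mathfrak{p}},$$
valid for any normal noetherian domain (it follows from Serre's $(R_1)+(S_2)$ criterion for normality, or equivalently from the fact that $A$ is a Krull domain). This allows me to apply Proposition \ref{p.CritLocal} with $X$ equal to the set of all codimension-one primes of $A$, reducing the $n$-universality of $S$ in $A$ to the $(n,\mathfrak{p})$-universality of $S$ in $A_{\mathfrak{p}}$ for each such $\mathfrak{p}$.

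Next, normality forces each $A_{\mathfrak{p}}$ (with $\mathrm{ht}(\mathfrak{p})=1$) to be a discrete valuation ring with maximal ideal $\mathfrak{p} A_{\mathfrak{p}}$. Since $A\hookrightarrow A_{\mathfrak{p}}$ is injective, the image of $S$ in $A_{\mathfrak{p}}$ has the same cardinality as $S$. Proposition \ref{p.CritDVR} applied to this image says that $S$ is $(n,\mathfrak{p})$-universal if and only if it contains a subset $S'\subset S$ of size $n+1$ that is almost uniformly distributed modulo $(\mathfrak{p} A_{\mathfrak{p}})^k$ for every $k\geq 1$.

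Finally, I need to identify the local distribution condition with the one in the statement. The natural homomorphism $A/\mathfrak{p}^k \to A_{\mathfrak{p}}/\mathfrak{p}^k A_{\mathfrak{p}}$ is a bijection: surjectivity follows because every $t\in A\setminus\mathfrak{p}$ becomes a unit modulo $\mathfrak{p}^k A_{\mathfrak{p}}$, while injectivity amounts to the equality $\mathfrak{p}^k = \mathfrak{p}^{(k)} = \mathfrak{p}^k A_{\mathfrak{p}} \cap A$, which holds whenever $\mathfrak{p}^k$ is $\mathfrak{p}$-primary—in particular in the Dedekind case, which is the one of interest for the rest of the paper. Under this bijection, the partition of $S'$ by residues modulo $\mathfrak{p}^k A_{\mathfrak{p}}$ coincides with the partition of $S'$ by residues modulo $\mathfrak{p}^k$ in $A$, so almost-uniform distribution in one sense is equivalent to almost-uniform distribution in the other. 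Combining the three steps yields the stated equivalence. The only mild obstacle is this last identification in the non-Dedekind case; everywhere else the argument is a bookkeeping glue between the two cited propositions.
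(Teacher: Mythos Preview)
Your argument is essentially identical to the paper's: use the intersection formula $A=\bigcap_{\mathrm{ht}(\mathfrak{p})=1}A_{\mathfrak{p}}$ to invoke Proposition~\ref{p.CritLocal}, then note that each such $A_{\mathfrak{p}}$ is a DVR and apply Proposition~\ref{p.CritDVR}. You are in fact more careful than the paper in flagging the distinction between $\mathfrak{p}^k$ and $\mathfrak{p}^k A_{\mathfrak{p}}\cap A=\mathfrak{p}^{(k)}$, which the paper passes over in silence; as you observe, this is immaterial in the Dedekind setting used in the remainder of the paper.
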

\begin{proof}
The equality
$$\bigcap_{{\rm codim} \mathfrak{p}=1}A_{\mathfrak{p}}=A$$
holds in every normal noetherian domain $A$ (cf.~\cite[Corollary 11.4]{Eis}). Thus, we can apply Proposition \ref{p.CritLocal}. For every prime ideal $\mathfrak{p}$ of codimension $1$ the ring $A_{\mathfrak{p}}$ is a discrete valuation ring. The claim follows from Proposition \ref{p.CritDVR}.
\end{proof}
The ring of integers in a number field $K$ is always a Dedekind domain, so in particular every non-zero prime ideal is maximal and has codimension one. We obtain the following corollary.
\begin{corollary}\label{p.n-univCondition}
Let $K$ be a number field and $S$ be a subset of $\mathcal{O}_K$. Then $S$ is $n$-universal if and only if for any non-zero prime ideal $\mathfrak{p}$ of $\mathcal{O}_K$ we can find a subset $S'$ of $S$ with $n+1$ elements which is almost uniformly distributed modulo powers of $\mathfrak{p}$.
\end{corollary}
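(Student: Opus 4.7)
The plan is to deduce this directly from Proposition \ref{p.CritNormal}, once I check that $\mathcal{O}_K$ satisfies the hypotheses and that the codimension-one primes of $\mathcal{O}_K$ are exactly the non-zero prime ideals. So the argument will essentially be a one-step specialization, and the work consists entirely in recalling standard facts from commutative algebra about rings of integers.

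First, I would recall that $\mathcal{O}_K$ is a Dedekind domain. This gives two structural properties I need: (i) $\mathcal{O}_K$ is noetherian, and (ii) $\mathcal{O}_K$ is integrally closed in its field of fractions $K$, hence normal. Therefore Proposition \ref{p.CritNormal} applies and says that $S$ is $n$-universal if and only if for every codimension-one prime $\mathfrak p \subset \mathcal O_K$ there is a subset $S'\subset S$ of size $n+1$ that is almost uniformly distributed modulo $\mathfrak p^k$ for every $k\geq 1$.

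Next, I would translate the phrase \emph{codimension-one prime} into \emph{non-zero prime ideal}. In a Dedekind domain, every non-zero prime ideal is maximal (this is part of the standard definition, coupled with the one-dimensionality of $\mathcal{O}_K$), and the zero ideal is the unique prime of height zero. So the primes of codimension exactly one are precisely the non-zero prime ideals of $\mathcal{O}_K$, and the statement of Proposition \ref{p.CritNormal} becomes the statement of the corollary.

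There is no real obstacle here; the only thing to make sure of is that the two conditions on $S'$ in the corollary (namely, being almost uniformly distributed modulo every positive power of $\mathfrak p$) match verbatim those in Proposition \ref{p.CritNormal}. Since that proposition was itself derived from Proposition \ref{p.CritDVR} applied to the localizations $\mathcal{O}_{K,\mathfrak p}$, which are discrete valuation rings with maximal ideal $\mathfrak p \mathcal{O}_{K,\mathfrak p}$, and since the residue class of an element $s \in \mathcal O_K$ modulo $\mathfrak p^k$ determines and is determined by its residue class modulo $\mathfrak p^k \mathcal{O}_{K,\mathfrak p}$, the uniformity conditions agree. Hence the corollary follows with essentially no additional work.
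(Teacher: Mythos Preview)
Your proposal is correct and follows essentially the same approach as the paper: the paper simply observes that $\mathcal{O}_K$ is a Dedekind domain, so every non-zero prime ideal is maximal and of codimension one, and then invokes Proposition \ref{p.CritNormal}. Your additional remark about the agreement of the almost-uniform-distribution conditions under localization is a harmless elaboration that the paper leaves implicit.
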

\subsection{Optimal sets and volume.}\label{s.Volume}
For optimal sets, we can rephrase the last result in terms of the notion of a \textbf{volume} of a set.
\begin{definition}
Let $S$ be a finite subset of $\mathcal{O}_K$. Define the volume of $S$ as the ideal
$$Vol(S)=\prod_{\substack{s,s'\in S\\ s\neq s'}}(s-s').$$ 
\end{definition}
The volumes of optimal $n$-universal sets are closely related to the factorial function in number fields. We recall the definition below. These factorials can be traced back to the work of Bhargava on  $p$-orderings and generalizations of factorial function \cite{B1}. We give an equivalent description of the generalized factorial function for the rings of integers in number fields.
\begin{definition}(cf. \cite{B1}, \cite{MLam14}) Let $K$ be a number field. We define the $K$-factorial of $n$ as the ideal 
$$n!_K=n!_{\mathcal{O}_K}=\prod_{\mathfrak{p} \in Spec\,\mathcal{O}_K}\mathfrak{p}^{w_{\mathfrak p}(n)},$$
where $w_{\mathfrak p}(n)=\sum_{i=1}^{\infty}\left[ n/N(\mathfrak{p}^i)\right]$.
\end{definition}
The rate of growth of norms of these factorials has been studied in depth in the recent thesis of M. Lamoureux \cite{MLam14}. We will use one of his results in Section \ref{s.EKconstant}. The following lemma provides a link between $n$-optimal subsets of $\mathcal{O}_K$ and generalized factorials.
\begin{lemma}\label{l.locAEq}
Let $A$ be a discrete valuation ring with the additive valuation $v$, the maximal ideal $\mathfrak{p}$,  and finite residue field. Then $S\subset A$ is  $n$-optimal if and only if $|S|=n+1$ and $$v(Vol(S))=2\sum_{k=1}^n\sum_{i=1}^{\infty}\left[ k/N(\mathfrak{p}^i)\right].$$
\end{lemma}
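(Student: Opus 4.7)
The plan is to combine Proposition \ref{p.CritDVR} with an explicit identification of the minimum of $v(Vol(S))$ over all subsets of $A$ of cardinality $n+1$. As a preliminary reduction I would rewrite the valuation of the volume as a level-sum using the identity $v(x)=\sum_{i\geq 1}\mathbf{1}[x\in\mathfrak{p}^i]$ valid for nonzero $x$:
$$v(Vol(S))=\sum_{\substack{s,s'\in S\\ s\neq s'}}v(s-s')=\sum_{i=1}^{\infty}a_i(S),$$
where $a_i(S)$ denotes the number of ordered pairs of distinct elements of $S$ that are congruent modulo $\mathfrak{p}^i$. This sum is effectively finite, since $v(s-s')<\infty$ for each fixed pair and $|S|<\infty$.

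Next I would minimize each $a_i(S)$ separately. Partitioning $S$ by residue classes modulo $\mathfrak{p}^i$ and letting $n_c$ be the number of elements of $S$ in class $c$, I have $a_i(S)=\sum_c n_c(n_c-1)$. Subject to $\sum_c n_c=n+1$ and the constraint that $c$ ranges over $N(\mathfrak{p}^i)$ classes, convexity of $t\mapsto t(t-1)$ forces the minimum to be attained precisely when the $n_c$ pairwise differ by at most $1$, that is, when $S$ is almost uniformly distributed modulo $\mathfrak{p}^i$ in the sense of Definition \ref{d.AUD}. Writing $n+1=m_i N(\mathfrak{p}^i)+r_i$ with $0\leq r_i<N(\mathfrak{p}^i)$, a direct computation yields
$$\min_{|S|=n+1}a_i(S)=r_i(m_i+1)m_i+(N(\mathfrak{p}^i)-r_i)m_i(m_i-1)=N(\mathfrak{p}^i)m_i(m_i-1)+2m_ir_i.$$
Independently, grouping the integers $k\in\{1,\ldots,n\}$ according to the value of $\lfloor k/N(\mathfrak{p}^i)\rfloor$ gives
$$\sum_{k=1}^n\left\lfloor\frac{k}{N(\mathfrak{p}^i)}\right\rfloor=N(\mathfrak{p}^i)\binom{m_i}{2}+m_ir_i,$$
which is exactly half of the previous expression.

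Summing over $i\geq 1$, I would obtain the sharp bound
$$v(Vol(S))\geq 2\sum_{i=1}^{\infty}\sum_{k=1}^n\left\lfloor\frac{k}{N(\mathfrak{p}^i)}\right\rfloor\quad\text{for any }|S|=n+1,$$
with equality if and only if every $a_i(S)$ is individually minimized, if and only if $S$ is almost uniformly distributed modulo $\mathfrak{p}^i$ for each $i\geq 1$. By Proposition \ref{p.CritDVR}, the last condition characterizes the $n$-universal (hence $n$-optimal) subsets of $A$ of size $n+1$, establishing both directions of the lemma. The only real content lies in the two elementary bookkeeping identities of the previous paragraph, so I do not expect any serious obstacle; the conceptual point is simply that the termwise minima of the $a_i(S)$ are simultaneously attainable, and this simultaneous attainability is precisely the criterion for $n$-optimality.
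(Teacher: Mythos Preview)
Your proof is correct and takes a genuinely different route from the paper's. The paper argues via $\mathfrak{p}$-orderings: it invokes Remark~\ref{r.Rem1} to arrange an $n$-optimal set as a Newton sequence $a_1,\ldots,a_{n+1}$, telescopes $v(Vol(S))=2\sum_{i=2}^{n+1}\sum_{j<i}v(a_i-a_j)$ into a sum of $v(k!_A)$'s, and then quotes the Legendre-type formula $v(k!_A)=\sum_{i\geq 1}\lfloor k/N(\mathfrak{p}^i)\rfloor$ from \cite{MLam14}. Your approach bypasses both the Newton-sequence reordering and the external citation: the level-sum decomposition $v(Vol(S))=\sum_i a_i(S)$ together with the convexity minimization gives a direct lower bound whose equality case is characterized exactly by almost uniform distribution, and the closed-form identity $\min a_i=2\sum_{k=1}^n\lfloor k/N(\mathfrak{p}^i)\rfloor$ is verified by elementary counting. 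Both proofs ultimately rest on Proposition~\ref{p.CritDVR}, but yours is more self-contained, while the paper's is shorter by outsourcing the arithmetic to Bhargava's framework.
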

\begin{proof} 
By Proposition \ref{d.AUD}, the set $S$ is $n$-optimal in $A$ if and only if it is almost uniformly distributed modulo powers of $\frak p$. In a discrete valuation ring any such set can be arranged into a Newton sequence (cf. Remark \ref{r.Rem1}). Hence, the $\frak p$-valuation $v(Vol(S))$ equals $v(Vol(
\{a_{1},\ldots,a_{|S|}\}))$ where $\{a_{1},\ldots,a_{|S|}\}$ is a $\frak p$-ordering of size $|S|$. We get
$$v(Vol(S))=\sum_{i\neq j}v(a_i-a_j)=2\sum_{i=2}^{|S|}\sum_{j=1}^{i-1}v(a_i-a_j)=2\sum_{i=2}^{|S|}v(i!_A).$$ From the discussion in Chapter 1.1 in \cite{MLam14}, we have $v(i!_A)=\sum_{k=1}^{\infty}\left[ i/N(\mathfrak{p}^k)\right].$ 
\end{proof}
\begin{proposition}\label{p.NunivVol}
Let $S$ be a subset of $\mathcal{O}_K$ with $n+1$ elements. Then the following conditions are equivalent:
\begin{enumerate}
\item $S$ is $n$-optimal,
\item $Vol(S)=(\prod_{i=1}^n i!_K)^2$, 
\item $Vol(S)$ divides $Vol(S_1)$ for any subset $S_1\subset \mathcal{O}_K$ with $n+1$ elements.
\end{enumerate}
\end{proposition}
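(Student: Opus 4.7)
The plan is to reduce everything to local $\mathfrak{p}$-adic statements, since $Vol(S)$ is an ideal in the Dedekind domain $\mathcal{O}_K$ and ideal equality or divisibility can be checked one prime at a time. The key local input is Lemma \ref{l.locAEq}, which pins down the $\mathfrak{p}$-adic valuation of the volume of a locally $n$-optimal set, and Corollary \ref{p.n-univCondition}, which says global $n$-optimality is equivalent to local $n$-optimality at every nonzero prime $\mathfrak{p}$. First I would note the identity $v_\mathfrak{p}(i!_K) = \sum_{j=1}^{\infty}[i/N(\mathfrak{p}^j)]$ directly from the definition of $i!_K$, so that
\[
v_\mathfrak{p}\!\left(\bigl(\textstyle\prod_{i=1}^{n} i!_K\bigr)^{2}\right)=2\sum_{k=1}^{n}\sum_{i=1}^{\infty}\bigl[k/N(\mathfrak{p}^{i})\bigr].
\]

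For (i) $\Leftrightarrow$ (ii), apply Corollary \ref{p.n-univCondition}: $S$ is $n$-optimal iff $S$ is $n$-optimal inside every localization $(\mathcal{O}_K)_\mathfrak{p}$. By Lemma \ref{l.locAEq}, the latter happens iff $v_\mathfrak{p}(Vol(S))$ equals the value displayed above, i.e.\ iff $v_\mathfrak{p}(Vol(S))=v_\mathfrak{p}((\prod_{i=1}^n i!_K)^2)$. Since a Dedekind domain is normal noetherian, matching $\mathfrak{p}$-valuations at every codimension-one prime is exactly the condition for two nonzero ideals to coincide, which gives (ii).

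For (ii) $\Rightarrow$ (iii), I would again argue prime by prime. Lemma \ref{l.locAEq} (together with the fact, recalled in its proof, that the common valuation of a $\mathfrak{p}$-ordering minimizes $v(Vol(\cdot))$ among $(n{+}1)$-subsets of the DVR $(\mathcal{O}_K)_\mathfrak{p}$) shows that for \emph{any} $(n{+}1)$-subset $S_1 \subset \mathcal{O}_K$ one has $v_\mathfrak{p}(Vol(S_1)) \geq 2\sum_{k=1}^{n}\sum_{i=1}^\infty[k/N(\mathfrak{p}^i)] = v_\mathfrak{p}(Vol(S))$. This holds at every $\mathfrak{p}$, so $Vol(S) \mid Vol(S_1)$.

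For (iii) $\Rightarrow$ (ii), I would show the reverse inequality locally by exhibiting, for each nonzero prime $\mathfrak{p}$, an $(n{+}1)$-subset $S_\mathfrak{p}\subset\mathcal{O}_K$ that is $n$-optimal in $(\mathcal{O}_K)_\mathfrak{p}$. Such a subset exists: take any $\mathfrak{p}$-ordering $a_0,\ldots,a_n$ in $(\mathcal{O}_K)_\mathfrak{p}$, choose $N$ so large that $v_\mathfrak{p}(a_i-a_j) < N$ for all $i\neq j$, and use the isomorphism $\mathcal{O}_K/\mathfrak{p}^N\cong (\mathcal{O}_K)_\mathfrak{p}/\mathfrak{p}^N$ to lift the $a_i$ to elements of $\mathcal{O}_K$ without changing any $v_\mathfrak{p}(a_i-a_j)$. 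Then (iii) yields $v_\mathfrak{p}(Vol(S)) \leq v_\mathfrak{p}(Vol(S_\mathfrak{p})) = 2\sum_{k=1}^n\sum_{i=1}^\infty[k/N(\mathfrak{p}^i)]$, while the opposite inequality is the general minimality used above. Hence equality holds at every $\mathfrak{p}$, giving (ii). The only mildly non-routine ingredient is this lifting step, but it is a standard consequence of the density of $\mathcal{O}_K$ in its $\mathfrak{p}$-adic completion, so the proof is essentially an assembly of results already established in the section.
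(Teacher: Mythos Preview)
Your proof is correct and rests on the same local-to-global principle and the same key input (Lemma~\ref{l.locAEq}) as the paper, so the two arguments are essentially the same in spirit. The only difference is in how the cycle of equivalences is organized: the paper proves (i)$\Leftrightarrow$(iii) directly by invoking the characterization ``$n$-optimal in a DVR $\Leftrightarrow$ $v_{\mathfrak p}(Vol(S))$ is minimal'' (citing \cite{PV}) and then (i)$\Leftrightarrow$(ii) via Lemma~\ref{l.locAEq}, whereas you prove (i)$\Leftrightarrow$(ii) first and then close the loop with (ii)$\Rightarrow$(iii)$\Rightarrow$(ii). Your route requires the extra lifting step in (iii)$\Rightarrow$(ii) to produce, for each $\mathfrak p$, a witness $S_{\mathfrak p}\subset\mathcal{O}_K$ that is locally $n$-optimal; this is correct but slightly heavier than necessary, since one can simply run the greedy $\mathfrak p$-ordering construction directly inside $\mathcal{O}_K$ rather than lifting from the localization.
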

\begin{proof}
The equivalence between (i) and (iii) is a special case of Lemma 1 in \cite{PV}. For the sake of the 
reader, we sketch a short proof. In a discrete valuation ring $\mathcal{O}_{K_\mathfrak{p}}$ a set $S$ with $n+1$ elements is $n$-universal  if and only if the $\mathfrak{p}$-adic valuation of $Vol(S)$ is the lowest possible. Since a set is $n$-universal if and only if it is $n$-universal in the localisation in every prime ideal, we obtain the desired equivalence.
To prove that (i) is equivalent to (ii), we pass to the localisation at every prime ideal $\mathfrak{p}$ and use Lemma \ref{l.locAEq}. 
\end{proof}

\section{Nonexistence of optimal sets in imaginary quadratic number fields}

In the ring of rational integers we can find an $n$-optimal set for every $n$.  To the best of our knowledge, there are no known examples of number fields other than $\Q$ where this property holds. Volkov and Petrov showed in \cite{PV} that for  large enough $n$ there are no $n$-optimal sets with elements in $\mathbf{Z}[i]$. Thus, we can ask the following question.
\begin{question}
Let $K$ be a number field other than $\Q$. Can we find a number $N$ such that for every $n>N$ there is no $n$-optimal subset of $\mathcal{O}_K$?
\end{question}
As we have mentioned, the work of Volkov and Petrov answers this question affirmatively for $K=\Q(i)$. In this section, we will prove that the answer is positive for all imaginary quadratic number fields. 

\begin{theorem}\label{t.ImQuad}
\label{3}
Let $K=\mathbf{Q}(\sqrt{-d})$ be an imaginary quadratic number field. Then for every large enough $n$ there is no $n$-optimal set in $\mathcal{O}_{K}$.
\end{theorem}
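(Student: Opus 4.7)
The plan is to derive a contradiction by combining the exact identity $Vol(S) = \bigl(\prod_{i=1}^n i!_K\bigr)^2$ from Proposition~\ref{p.NunivVol}(ii) with a geometric lower bound on the volume that follows from Corollary~\ref{p.n-univCondition}. If $S \subset \mathcal{O}_K$ is $n$-optimal with $|S| = n+1$, then taking ideal norms and using $N(\alpha) = |\sigma(\alpha)|^2$ for the complex embedding $\sigma\colon K \hookrightarrow \C$ gives
\[
\sum_{\substack{s, s' \in S \\ s \neq s'}} \log |\sigma(s) - \sigma(s')| \;=\; 2 \sum_{i=1}^n \log N(i!_K),
\]
where the right-hand side is evaluated explicitly via the Legendre-type formula $w_{\mathfrak{p}}(i) = \sum_{k \geq 1} \lfloor i/N(\mathfrak{p}^k) \rfloor$. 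A routine estimation yields an asymptotic of the form $n^2 \log n - A_K\, n^2 + o(n^2)$, with a leading constant $A_K$ determined by the small-norm primes of $\mathcal{O}_K$.

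On the other hand, an $n$-optimal $S$ is almost uniformly distributed modulo $\mathfrak{p}^k$ for every prime $\mathfrak{p}$ of $\mathcal{O}_K$ and every $k \geq 1$. Viewing $\sigma(S)$ as a finite subset of the lattice $\sigma(\mathcal{O}_K) \subset \C$, this forces $S$ to meet (essentially) every coset of the sublattice $\mathfrak{p}^k \mathcal{O}_K$ with the right multiplicity. Fixing a rational prime $p$ of small norm, one partitions the pair sum $\sum_{s \neq s'} \log|\sigma(s) - \sigma(s')|$ according to the largest power of $\mathfrak{p}$ dividing $s - s'$, controls each scale by a packing estimate in $\C$ (points of $\sigma(S)$ in a common residue class mod $\mathfrak{p}^k$ must be separated by at least the relevant lattice minimum, and they cannot all cluster), and sums to obtain a lower bound of the shape $n^2 \log n - B_K\, n^2 + o(n^2)$ with $B_K < A_K$ strictly. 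For $n$ large enough this contradicts the equality above.

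The case distinction $d \equiv 1, 2 \pmod 4$ versus $d \equiv 3 \pmod 4$ reflects the two possible shapes of $\mathcal{O}_K$ as a lattice in $\C$. When $d \equiv 1, 2 \pmod 4$, $\mathcal{O}_K = \Z[\sqrt{-d}]$ is rectangular and the proof extends the one of Volkov--Petrov for $\Z[i]$ almost verbatim. When $d \equiv 3 \pmod 4$, $\mathcal{O}_K = \Z[(1+\sqrt{-d})/2]$ has a skew fundamental domain, and I would pass first to the index-two sublattice $\Z + \sqrt{-d}\,\Z$, splitting $S$ according to the parity of the coefficient of $(1+\sqrt{-d})/2$, and then adapt the scale-by-scale packing argument to the rectangular refinement. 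The main obstacle throughout is the strict inequality $B_K < A_K$: one must quantify that a lattice configuration in $\C$ obeying the uniform distribution constraint at every scale cannot reach the sharp logarithmic-energy optimum demanded by the factorial formula, and this gap must be computed carefully since the factorial and the geometric bound agree to leading order $n^2 \log n$. The calculation is significantly more delicate in the $d \equiv 3 \pmod 4$ case, where the non-rectangular geometry enters the packing inequality and complicates the bookkeeping across scales.
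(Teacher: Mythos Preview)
Your proposal outlines a plausible-sounding strategy but contains a genuine gap, and it is also quite different from the route the paper actually takes.

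The paper does \emph{not} prove Theorem~\ref{t.ImQuad} by comparing second-order constants in two asymptotic expansions of $\log N(Vol(S))$. Instead, the argument is combinatorial and geometric at a single, carefully chosen scale. First, a ``collapsing'' lemma (Lemma~\ref{collapsing}) shows that any $n$-optimal set, being a volume minimiser, must be collapsed along every direction: for each line $l$ there is a parallel line $l_1$ such that on every perpendicular line $m$ the points of $S\cap m$ form a contiguous block symmetric about $l_1$. Second, one picks two inert rational primes $p_1\in(\sqrt{n},(1+\varepsilon)\sqrt{n})$ and $p_2\in(\sqrt{n/2},(1+\varepsilon)\sqrt{n/2})$ via Dirichlet's theorem. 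Since $|\mathcal{O}_K/p_1\mathcal{O}_K|=p_1^2>n$, almost uniform distribution mod $p_1$ forces the elements of $S$ to be pairwise distinct mod $p_1$; combined with collapsing, this confines $S$ to a polygon containing $p_1^2+o(p_1^2)$ lattice points (a rectangle when $d\equiv 1,2\pmod 4$, a hexagon arising from three collapsing directions when $d\equiv 3\pmod 4$). Third, inside this polygon one finds $\Omega(n)$ triples (or quadruples) of lattice points congruent mod $p_2$; since $p_2^2>n/2$, almost uniform distribution mod $p_2$ forbids $S$ from containing all of any such tuple, forcing $|S|<n+1$ for large $n$. No asymptotic comparison of constants $A_K$ versus $B_K$ is ever made.

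Your approach, by contrast, aims to compare the exact value $2\sum_{i=1}^n\log N(i!_K)$ with a geometric lower bound on the logarithmic energy $\sum_{s\neq s'}\log|\sigma(s)-\sigma(s')|$, both of size $n^2\log n + O(n^2)$, and to exhibit a strict inequality at the $n^2$ level. The paper does compute the factorial side in Section~\ref{s.EKconstant} (it equals $n^2\log n-\tfrac{3}{2}n^2-n^2(\gamma_K-\gamma_{\Q})+o(n^2)$, involving the Euler--Kronecker constant), but it uses this only to bound $\gamma_K$, not to prove non-existence of optimal sets. The gap in your proposal is that the geometric lower bound is never actually established: the phrase ``controls each scale by a packing estimate in $\C$ \ldots\ and they cannot all cluster'' is precisely the content of the theorem, and you give no mechanism for proving it. A set of $n+1$ lattice points that is almost uniformly distributed modulo every $\mathfrak{p}^k$ can still, a priori, achieve the minimal possible logarithmic energy at the $n^2$ level; ruling this out requires an argument you have not supplied. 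Your suggestion for $d\equiv 3\pmod 4$ (pass to the index-two rectangular sublattice and split by parity) is also not what the paper does: the paper instead collapses along three directions---the two perpendicular to $\tfrac{\pm1+\sqrt{-d}}{2}$ and the vertical---and works directly with the resulting hexagonal region.
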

\begin{proof}
We divide the proof into two cases. In the case $d\equiv 1,2 \pmod{4}$, we will use methods similar to those in \cite{PV}. In the case $d\equiv 3\pmod{4}$, the problem is more difficult and we need to introduce a number of modifications.

Assume the contrary, i.e., that for arbitrarily large $n$ there exists an $n$-optimal set. From now on we identify $\mathcal{O}_{K}$ with its image via a fixed embedding $K\hookrightarrow \mathbf{C}$. The strategy of the proof is as follows. Let $S$ be an $n$-optimal set. We show that $S$ is contained in a polygon which contains $n+o(n)$ points from the lattice $\mathcal{O}_{K}$. Then we show that since every $n$-optimal set is almost uniformly distributed modulo $\mathfrak{p}^{s}$ for every prime $\mathfrak{p}$ and $s\geq 1$, there exists a subset of lattice points of the polygon which is disjoint with $S$ and has $\Omega(n)$ points. This yields a contradiction.

It is well known that the ring of integers $\mathcal{O}_{K}$ is equal to $\mathbf{Z}[\frac{1+\sqrt{-d}}{2}]$ if $d\equiv -1 \pmod{4}$ and $\mathcal{O}_K=\mathbf{Z}[\sqrt{-d}]$ otherwise. Throughout the proof $S$ denotes an $n$-optimal set.

\subsection{Case $d\not\equiv -1 \pmod{4}$}

Take $\epsilon >0$. We use Proposition \ref{p.NunivVol} to show that an $n$-optimal set is collapsed\footnote{This notion will be made precise later.} along some horizontal and vertical lines. Together with Corollary \ref{p.n-univCondition}, this implies that the set $S$ is contained in a rectangle with sides parallel to the coordinate axes and  containing $n+o(n)$ points from $\mathcal{O}_{K}$. We now introduce the notion of collapsing.
\begin{definition}\label{d.Collapse}
Let $K$ be an imaginary quadratic number field and let $T$ be a finite subset of $\mathcal{O}_{K}$. Let $l$ be a line in the complex plane. The line $l$ divides the complex plane into two closed half-planes $H_1,H_2$. Let us distinguish one of them, say $H=H_1$. 
\newline We say that the set $T$ is \textbf{collapsed} along the pair $(l,H)$  if the following conditions holds: 
\begin{enumerate}
\item Let $m$ be a line perpendicular to $l$, containing at least one point from the set $T$. Let $x$ be a point from $T$ which belongs to $m$. Then every point in $\mathcal{O}_K$ lying between the point $m\cap l$ and $x$ also belongs to $T$.\footnote{In other words $T\cap m=\mathcal{O}_K\cap \conv ((T\cup l)\cap m)$.} 
\item Let $m$ be a line perpendicular to $l$. Then $|T\cap m\cap H_1|-|T\cap m\cap H_2|$ is equal to $0$ or $1$.
\end{enumerate}

We call a closed domain which is bounded by two lines parallel to $l$ and symmetric with respect to $l$ a \textbf{strip} along  $l$. A \textbf{semi-strip} along $l$ is a strip along $l$ without the part of the boundary lying in the distinguished half-plane. A strip (resp. semi-strip) parallel to $l$ is a strip (resp. semi-strip) along a line parallel to $l$.
\end{definition}
\begin{definition} Let $T$ be a finite subset of $\mathcal{O}_K$ and $(l,H)$ be as above. The set $\col_{(l,H)}(T)$ is defined as the unique subset of $\mathcal{O}_K$ satisfying the following properties:
\begin{enumerate}
\item $\col_{(l,H)}(T)$ is collapsed along $(l,H)$,
\item for every line $m$ perpendicular to $l$ we have $|T\cap m|=|\col_{(l,H)}(T)\cap m|$.
\end{enumerate}
\end{definition} 
We will now show that the set $S$ is collapsed along some vertical and horizontal lines. We use the following lemma.
\begin{lemma}
\label{collapsing}
Let $T$ be a finite subset of $\mathcal{O}_{K}$, where $K$ is an imaginary quadratic number field. Let $l$ be a line in the complex plane. If $T$ is an $n$-optimal set, then there exists a line $l_{1}$ parallel to $l$ such that the set $T$ is collapsed along the line $l_{1}$ (for some choice of the distinguished half-plane).
\end{lemma} 
\begin{proof}
By Proposition \ref{p.NunivVol}, the absolute value of the volume of the set $T$ is minimal among absolute values of the volumes of the subsets of $\mathcal{O}_{K}$ of the same cardinality as $T$. The absolute value of the volume of the set $T=\lbrace a_{0}, \ldots, a_{n} \rbrace$ is given by \begin{equation}
\label{VOL}
|Vol(T)|=\prod _{\substack{k,m-\textrm{lines} \\ k\parallel l, m\parallel l}} \prod_{\substack{a_{i}\in k\\a_{j} \in m\\i\neq j}}|a_{i}-a_{j}|.
\end{equation}
Due to this formula, it is enough to show the following lemma:
\begin{lemma}
\label{prosta}
Let $l_{2}$ and $l_{3}$ be two parallel lines in the complex plane and let $C,D$ be finite subsets of $\mathcal{O}_{K}$ such that $C$ is contained in $l_{2}$ and $D$ is contained in $l_{3}$. The number of elements $c\in C$ and $d\in D$ such that $|c-d|\leqslant m$ is maximal for all $m\geq 0$ if and only if there exists a line $t$ in the complex plane (perpendicular to $l_{2}$ and $l_{3}$) such that the sets $C$ and $D$ are collapsed along the line $t$ (up to a choice of the distinguished half-plane).
\end{lemma}
\begin{proof}
Let $d\in D$. Denote by $d_{1}$ the orthogonal projection of $d$ onto the line $l_{2}$ and let $D_{1}=\lbrace d_{1}  \mid d\in D \rbrace$. It is enough to prove the lemma in the case $l_{2}=l_{3}$, $C=C$ and $D=D_{1}$ because the function $|c-d|\mapsto |c-d_{1}|$ is strictly increasing. The proof of this case is analogous to the proof of Lemma 4 in \cite{PV}.
\end{proof}
We argue that if the absolute value of the volume of the set $T$ is minimal among the absolute values of volumes of  subsets of $\mathcal{O}_{K}$ of the same cardinality as $T$, then for every pair of lines $m_1,m_2$ perpendicular to $l$  there exists a line $l_{12}$ parallel to $l$ such that the set $T\cap (m_1\cup m_2)$ is collapsed along the line $l_{12}$. Indeed, if it was not the case then by Lemma \ref{prosta} and formula (\ref{VOL}) the absolute value $|Vol(\col_{(l,H)}(T))|$ would be strictly smaller than $|Vol(T)|$. It remains to show that we can choose a single line $l_1$ parallel to $l$ and a half-plane $H$  such that $T$ is collapsed along $(l,H)$.

 For every line $m$ perpendicular to $l$, consider the set $C_{m}$ of lines $l'$ parallel to $l$ such that $T\cap m$ is collapsed along $l'$ (for some choice of a half-plane). By Definition \ref{d.Collapse} the lines in the set $C_{m}$ form a semi-strip parallel to $l$. As we have pointed out, for every pair of lines $m_1,m_2$ perpendicular to $l$ there exists a line $l_{12}$ parallel to $l$ and a half-plane $H_{12}$ such that $(T\cap m_1)\cup (T\cap m_2)$ is collapsed along $(l_{12},H_{12})$. This is equivalent to saying that $C_{m_1}\cap C_{m_2}$ is nonempty for any $m_1,m_2$. The sets $C_{m}$ for $m$ perpendicular to $l$ are semi-strips parallel to $l$, hence they intersect non-trivially if and only if every pair does.  Hence $\bigcap C_{m}\neq \emptyset$ and we can find a line $k$ parallel to $l$ such that $T\cap m$ is collapsed along $k$ for every $m$. We can distinguish the same half-plane for every $m$ since we can do it for every pair $m_1,m_2$. Thus, $T$ is itself collapsed along $k$, which ends the proof of Lemma \ref{collapsing}.
\end{proof}

By the previous lemma, as the set $S$ is $n$-optimal, there exists a horizontal line $l_{1}$ and a vertical line $l_{2}$ such that the set $S$ is collapsed along $l_{1}$ and $l_{2}$. There exist strips $S_{l_{1}}$ along the line $l_{1}$ and $S_{l_{2}}$ along the line $l_{2}$ such that the set $S$ is contained in the intersection $S_{l_1}\cap S_{l_2}$. We will use Corollary \ref{p.n-univCondition} to calculate a bound for the width of these strips. To this end, we will use the following results.
\begin{theorem}
\label{prime}
Let $\epsilon >0$ and $a, b$ be coprime natural numbers. Then for every $m$ large enough, there exists a prime number $p \in (m,(1+\epsilon )m)$ such that $p=af+b$ where $f$ is an integer.
\end{theorem}
This is a standard consequence of the prime number theorem and the Dirichlet's theorem on prime numbers in arithmetic progressions (more precisely, its version with natural density, cf. \cite[Ch. VIII.4 and Ch. XV]{Lan}). A more detailed proof may be found in \cite{PV}. The following lemma is well-known, but we supply its proof for a lack of suitable reference.
\begin{lemma}
For every nonzero natural number $d$ there exists a natural number $c$ coprime with $4d$ such that all numbers of form $4dl+c$ which are prime in $\mathbf{Z}$ are irreducible in the ring of integers $\mathcal{O}_{K}$ of the number field $K=\mathbf{Q}(\sqrt{-d})$.
\end{lemma}
\begin{proof}
An odd prime number $p$ is irreducible in the ring of integers $\mathcal{O}_{K}$ if and only if $-d$ is not a square modulo $p$. Let us write $d=2^kd_1$ with $d_1$ odd,  $k \in\{0,1\}$.  The quadratic reciprocity for Jacobi symbols (see e.g. \cite{Ire}) yields $$\left(\frac{p}{d_1}\right)\left(\frac{d_1}{p}\right)=(-1)^\frac{(p-1)(d_1-1)}{4}.$$ Consequently, if $p\equiv 1\pmod 4$, then $$\left(\frac{d_1}{p}\right)=\left(\frac{p}{d_1}\right).$$ Let $c$ be a natural number such that $\left(\frac{c}{d_{1}}\right)=-1$ and $c \equiv 1 \pmod 4$. Then for every prime $p$ of the form $4dl+c$, we get
$$\left(\frac{-d}{p}\right)=\left(\frac{-1}{p}\right)\left(\frac{2}{p}\right)^k\left(\frac{d_1}{p}\right)=\left(\frac{p}{d_1}\right)=\left(\frac{c}{d_1}\right)=-1,$$ since $\left(\frac{2}{p}\right)^k=1$ (this is obvious for $k=0$; for $k=1$ this follows from the fact that in this case $p \equiv 1 \pmod 8$).  We conclude that for primes $p$ of the form $4dl+c$, the number $-d$ is not a square modulo $p$ and so $p$ is irreducible in $\mathcal{O}_K$.\end{proof} 
Let us take a natural number $c$ such that any prime number of the form $4dl+c$ is irreducible in the ring of integers $\mathbf{Z}[\sqrt{-d}]$. By Theorem \ref{prime}, there exists a prime number $4dl+c \in (m, (1+\epsilon )m)$ provided that $m\in \mathbf{N}$ is large enough. Let us take prime numbers $p_{1}\in (\sqrt{n}, (1+\epsilon)\sqrt{n})$ and $p_{2}\in (\sqrt{\frac{n}{2}}, (1+\epsilon )\sqrt{\frac{n}{2}})$ such that $p_{1}$ and $p_{2}$ are irreducible in $\mathbf{Z}[\sqrt{-d}]$. There are $p_{1}^{2}> n$ different remainders modulo $p_{1}$, so by Corollary \ref{p.n-univCondition} two different elements from the set $S$ cannot give the same remainder modulo $p_{1}$. Therefore, as the set $S$ is collapsed along the lines $l_{1}$ and $l_{2}$, we can assume that the strip $S_{l_{1}}$ has width $p_{1}\sqrt{d}$ and the strip $S_{l_{2}}$ has  width $p_{1}$. Hence, the set $S$ is contained in the rectangle $P=S_{l_1}\cap S_{l_2}$. The rectangle $P$ has horizontal and vertical sides of length respectively $p_{1}$ and $p_{1}\sqrt{d}$ and its left bottom vertex belongs to $\mathcal{O}_{K}$. The rectangle $P$ contains $(p_{1}+1)^{2}$ points from the lattice. Now we will show that a substantial subset of the lattice points in the rectangle $P$ cannot belong to the set $S$. Let $P_{1}$ be the rectangle with horizontal and vertical sides of lengths respectively $p_{1}-p_{2}-1$ and $(p_{1}-p_{2}-1)\sqrt{d}$ which is contained in the rectangle $P$ and such that the left bottom vertex of the rectangle $P_{1}$ is the left bottom vertex of the rectangle $P$. The rectangle $P_{1}$ contains $(p_{1}-p_{2})^{2}$ points from $\mathcal{O}_{K}$. Let $x$ be a lattice point which is contained in the rectangle $P_{1}$. Consider the set $R_{x}=\lbrace x, x+p_{2}\sqrt{-d}, x+p_{2}\sqrt{-d}+p_{2}, x+p_{2} \rbrace$. The set $R_{x}$ is contained in the rectangle $P$. Each element from the set $R_{x}$ has the same remainder modulo $p_{2}$. There are $p_{2}^{2}>\frac{n}{2}$ different remainders modulo $p_{2}$. By  Corollary \ref{p.n-univCondition}, the set $S$ contains at most two elements from the set $R_{x}$. Considering the set $R_{x}$ for every $x\in P_{1}\cap \mathcal{O}_{K}$, we get $$ |S|\leq (p_{1}+1)^{2}-2(p_{1}-p_{2})^{2}.$$ This implies that \begin{align*}|S|&\leq -p_{1}^{2}+4p_{1}p_{2}-2p_{2}^{2}+2p_{1}+1\\ n+1&\leq -2n+\frac{4}{\sqrt{2}}n(1+\epsilon )^{2}+2(1+\epsilon )\sqrt{n} +1 \\ \frac{3\sqrt{2}}{4}&\leq (1+\epsilon  )^{2}+\frac{(1+\epsilon )\sqrt{n}}{\sqrt{2}n}.\end{align*} As we can take $\epsilon>0$ arbitrarily small, we get a contradiction for large $n$. This ends the proof of Case 1 of Theorem \ref{3}.
\subsection{Case $d\equiv -1 \pmod{4}$.}
Recall that  $\mathcal{O}_{K}=\mathbf{Z}[\frac{1+\sqrt{-d}}{2}]$. Take $\epsilon > 0$. Denote by $k_{1}$ (resp. $ k_{2}$) the line which contains the point $0$ and is perpendicular to the line which contains points $0$ and $\frac{1+\sqrt{-d}}{2}$ (resp. points $0$ and $\frac{-1+\sqrt{-d}}{2}$). We have $$k_{1}=\lbrace (x,y)\in \mathbf{R}^{2}\mid y=- \frac{x}{\sqrt{d}}\rbrace$$ $$k_{2}=\lbrace (x,y)\in \mathbf{R}^{2}\mid y=\frac{x}{\sqrt{d}}\rbrace .$$
By Lemma \ref{collapsing}, there exist lines $m_{1},m_{2}$ parallel respectively to $k_{1}$ and $k_{2}$ and  a vertical line $m_{3}$ such that the set $S$ is collapsed along the lines $m_{1},m_{2}$, and $m_{3}$.

Denote by $m_{12}$ the intersection of $m_{1}$ and $m_{2}$ and by $x$ the distance between the point $m_{12}$ and the line $m_{3}$. We will divide the proof of this case into two subcases which depend on the distance $x$. Choose $p_{1}$ and $p_{2}$ in the same way as in the previous case, i.e., $p_{1}\in (\sqrt{n}, (1+\epsilon)\sqrt{n})$ and $p_{2}\in (\sqrt{\frac{n}{2}}, (1+\epsilon )\sqrt{\frac{n}{2}})$ such that $p_{1}$ and $p_{2}$ are irreducible in $\mathbf{Z}[\frac{(1+\sqrt{-d})}{2}]$.
Choose constants $C_1$ and $C$ such that  $\frac{3\sqrt{2}}{4(1+2\epsilon )}<C_{1}< \frac{3\sqrt{2}}{4(1+\epsilon )}$ and $0<C<\frac{p_{1}+1-C_{1}p_{2}}{2}$. (This is indeed possible for small $\epsilon$.)

\textbf{Subcase 1.} Assume $x\leqslant C$. In this case, we will show that the $n$-optimal set $S$ is contained in a hexagon which contains at most $p_{1}^{2}+o(p_{1}^{2})$ points from $\mathcal{O}_{K}$. Then we will show that some points from the hexagon cannot belong to the set $S$. The cardinality of the set of these points is $\eta n+o(n)$ (for some $\eta >0$ independent of $n$) and we get a contradiction for large $n$.

\begin{figure}[h]
  \centering
\includegraphics[width=140mm]{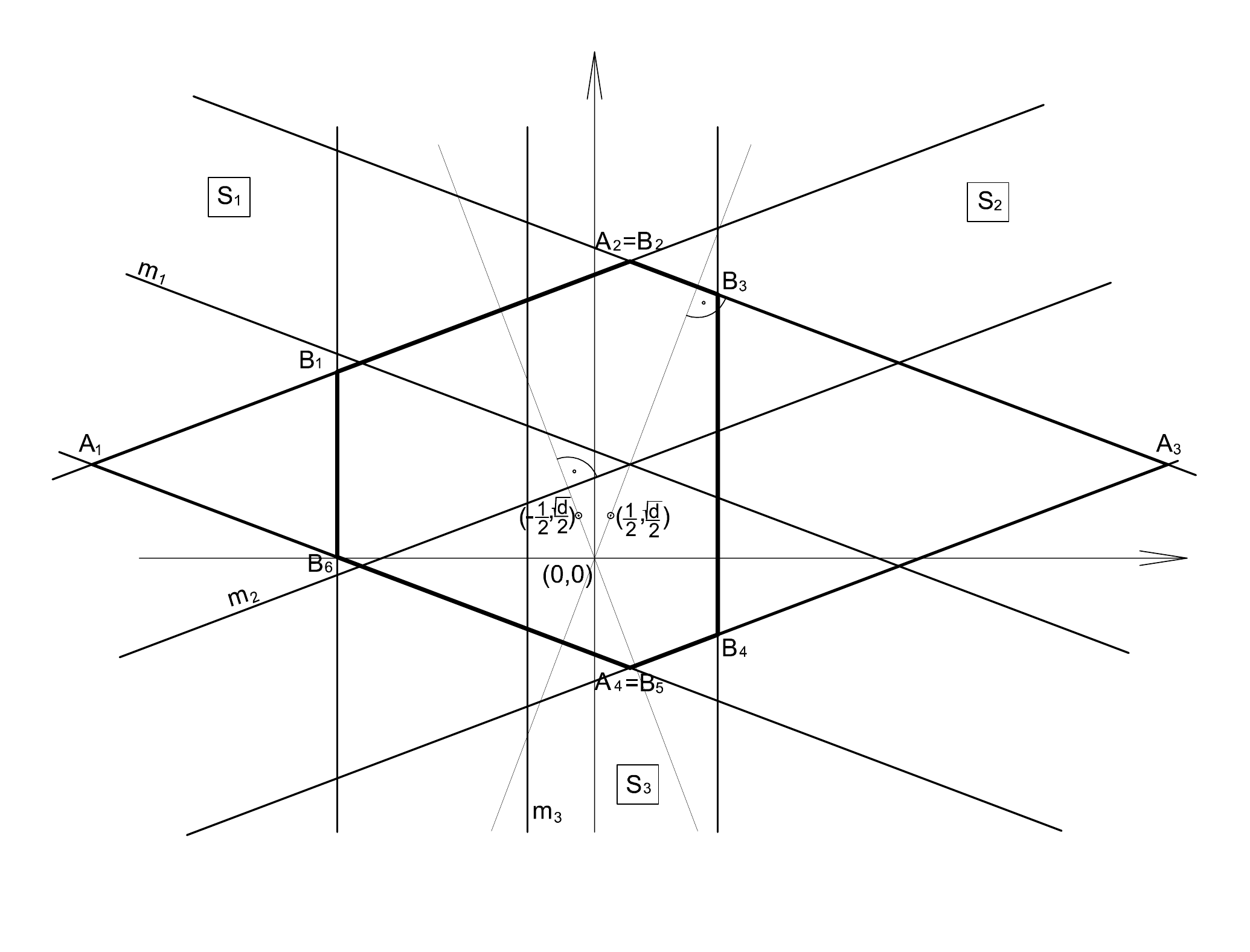}
\caption{The hexagon $B_{1}B_{2}B_{3}B_{4}B_{5}B_{6}$ contains the set $S$.}
\end{figure}

The set $S$ is collapsed along the lines $m_{1},m_{2}, m_{3}$ and is contained in the intersection of the strips: $S_{1}$ along the line $m_{1}$, $S_{2}$ along the line $m_{2}$ and $S_{3}$ along the line $m_{3}$ (see Figure 1). Analogously as in the previous case we can assume that the strips $S_{1}$ and $S_{2}$ have width $\frac{(p_{1}+1)\sqrt{1+d}}{2}$ and $S_{3}$ has width $p_{1}+1$.

Now we will compute the area of the intersection of the strips $S_{1}$ and $S_{2}$. Denote $F=S_{1}\cap S_{2}$. The polygon $F$ is a rhombus with height (i.e., the distance between the opposite sides) $h=(p_{1}+1)\frac{\sqrt{1+d}}{2}$. Denote by $a=|A_{1}A_{2}|$ the length of the side of the rhombus $F$ and by $\alpha $ the argument of the number $\frac{1+\sqrt{-d}}{2}$. Then the area of the rhombus $F$ is given by the following formula: $$P_{F}=2\cdot\frac{1}{2}a^{2}\sin (\pi -2\alpha )=\frac{h^{2}}{2\sin \alpha \cos \alpha }=\frac{(p_{1}+1)^{2}\frac{(1+d)}{4}}{2\cdot\frac{\sqrt{d}}{1+d}}=\frac{(p_{1}+1)^{2}(1+d)^{2}}{8\sqrt{d}}.$$ Denote by $E$ the intersection of $F$ and the strip $S_{3}$. Now we will compute the area of $E$. Denote by $b_{1}$ the length of the horizontal diagonal $A_{1}A_{3}$ of $F$ and by $b_{2}$ the length of the vertical diagonal $A_{2}A_{4}$ of $F$. Note that $d_2=d_1/\sqrt{d}$ and $P_F=\frac{1}{2} d_1 d_2=\frac{1}{2} \frac{d_1^2}{\sqrt{d}}.$ Since the length $b_{1}$ is equal to $\frac{(p_{1}+1)(d+1)}{2}$, $C< \frac{p_{1}+1}{2}$ and $C< \frac{b_{1}}{2}-\frac{p_{1}+1}{2}$, the domain $E$ is a hexagon. Denote by $E_{1}$ and $E_{2}$ the connected components of the set $F\setminus E$. The sets $E_{1}$ and $E_{2}$ are triangles. Denote by $P_{E_{1}}$ (resp. $P_{E_{2}}$) the area of the triangle $E_{1}$ (resp. $E_{2}$). Assume that $P_{E_{1}}\leqslant P_{E_{2}}$. Since the distance of $A_1$ to $B_1 B_6$ is $(p_1+1)(d+1)/4-(p_1+1)/2-x$ and the distance of $A_3$ to $B_3 B_4$ is $(p_1+1)(d+1)/4-(p_1+1)/2+x$, we have the following formulae: \begin{align*}P_{E_{1}}=\frac{\Big(\frac{(p_1+1)(d-1)}{4}-x\Big)^{2}}{\sqrt{d}}, &  & P_{E_{2}}=\frac{\Big(\frac{(p_1+1)(d-1)}{4}+x\Big)^{2}}{\sqrt{d}}.\end{align*}The area of the hexagon $E$ is given by the  formula \begin{align*}P_{E}=P_{F}-P_{E_{1}}-P_{E_{2}}=\frac{(p_1+1)^{2}((1+d)^{2}-(1-d)^2)-16x^{2}}{8\sqrt{d}}=\frac{(p_1+1)^{2}d-4x^{2}}{2\sqrt{d}}.\end{align*} It follows that the hexagon $E$ contains $P_{E}\cdot\frac{1}{\frac{\sqrt{d}}{2}}+o(P_{E}\cdot\frac{1}{\frac{\sqrt{d}}{2}})$ points from $\mathcal{O}_{K}$. We compute $$P_{E}\cdot\frac{1}{\frac{\sqrt{d}}{2}}=\frac{(p_1+1)^{2}d-4x^{2}}{d}=(p_1+1)^{2}-\frac{4x^{2}}{d}.$$ This means that the hexagon $E$ contains at most $p_1^{2}+o(p_1^{2})$ points from $\mathcal{O}_{K}$. 

\begin{figure}[hb]
  \centering

\includegraphics[width=140mm]{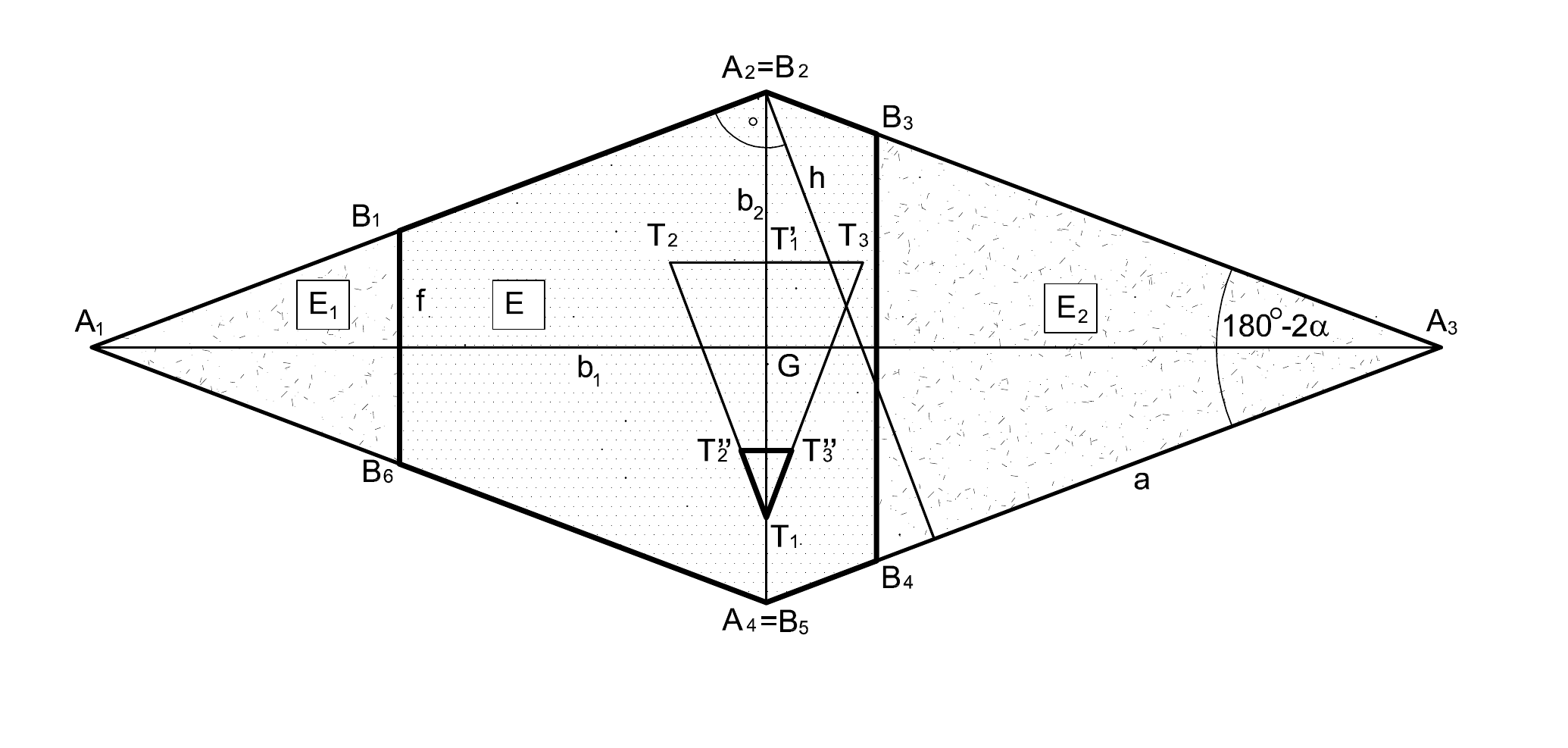}
\caption{The triangles $T$ and $U_1$. Considering these triangles leads to an improved bound on the cardinality of $S$.}
\end{figure}

We will now show that some of these points cannot belong to the set $S$. Denote by $G$ the intersection of the diagonals of the rhombus $F$. Consider a triangle $T$ with vertices $T_{1}, T_{2}, T_{3}$ and denote by $T'_{1}$ the orthogonal projection of the point $T_{1}$ onto the side $T_{2}T_{3}$. We pick the vertices of $T$ so that  $T_{2}T_{3}$ is horizontal, $|T_{2}T_{3}|=C_{1}p_{2}$, $|T_{1}T_{3}|=|T_{1}T_{2}|=\frac{C_{1}p_{2}\sqrt{1+d}}{2}$, $|GT_{1}|=2|GT'_{1}|$, the imaginary part of the point $T_{1}$ is smaller than the imaginary part of the point $T_{2}$ and the real part of the point $T_{2}$ is smaller than the real part of the point $T_{3}$ (see Figure 2). Denote by $f$ the length of the smallest vertical side of the hexagon $E$. We compute: $b_{2}=\frac{(p_{1}+1)(d+1)}{2\sqrt{d}}$, $|T_{1}T'_{1}|=\frac{C_{1}p_{2}\sqrt{d}}{2}$ and $f=\frac{(p_{1}+1)(d-1)-4x}{2\sqrt{d}}$. As $|T_{1}G|< \frac{b_{2}}{2}$, $|T'_{1}G|< \frac{f}{2}$ and $|T_{2}T_{3}|< p_{1}+1-2x$, the triangle $T$ is contained in the hexagon $E$ for $\epsilon $ small enough and $n$ large enough (to see that, recall that $\frac{3\sqrt{2}}{4(1+2\epsilon )}<C_{1}< \frac{3\sqrt{2}}{4(1+\epsilon )}$ and $0<C<\frac{p_{1}+1-C_{1}p_{2}}{2}$). Consider a triangle $U_{1}$ with vertices $T_{1}, T''_{2}, T''_{3}$ which is the image of a triangle $T$ under the homothety transformation with center in $T_{1}$ and ratio $\frac{C_{1}-1}{C_{1}}$. The area $P_{U_1}$ of the triangle $U_{1}$ is equal to $\frac{(C_{1}-1)^{2}p_{2}^{2}\sqrt{d}}{4}.$ This implies that the triangle $U_{1}$ contains $P_{U_{1}}\cdot\frac{1}{\frac{\sqrt{d}}{2}}+o(P_{U_{1}}\cdot\frac{1}{\frac{\sqrt{d}}{2}})=\frac{(C_{1}-1)^{2}p_{2}^{2}}{2}+o(\frac{(C_{1}-1)^{2}p_{2}^{2}}{2})$ points from $\mathcal{O}_{K}$. Let $y\in U_{1}\cap \mathcal{O}_{K}$. Consider a set $T_{y}=\lbrace y, y+p_{2}\frac{1+\sqrt{-d}}{2}, y+p_{2}\frac{-1+\sqrt{-d}}{2} \rbrace$. The set $T_{y}$ is contained in the hexagon $E$ (for large enough $n$ and small enough  $\epsilon $). In fact, $U_1$, $U_1+p_{2}\frac{1+\sqrt{-d}}{2}$ and $U_1+p_{2}\frac{-1+\sqrt{-d}}{2}$ are three small triangles that are lying inside $T$ and share one vertex with it.  The elements of $T_{y}$ give the same remainder modulo $p_{2}$. As there are $p_{2}^{2}> \frac{n}{2}$ different remainders modulo $p_{2}$, by Corollary \ref{p.n-univCondition} the set $S$ cannot contain three different elements which give the same remainder modulo $p_{2}$. Therefore, at least one element of the set $T_{y}$ does not belong to the set $S$. Considering $T_{y}$ for every $y\in U_{1}\cap \mathcal{O}_{K}$, we get that at least $\frac{(C_{1}-1)^{2}p_{2}^{2}}{2}+o(\frac{(C_{1}-1)^{2}p_{2}^{2}}{2})$ points from $E\cap \mathcal{O}_{K}$ do not belong to the set $S$ (for $\epsilon $ small enough and large enough $n$). This implies that $|S|\leq p_{1}^{2}-\frac{(C_{1}-1)^{2}p_{2}^{2}}{2}+o(p_{1}^{2})-o(p_{2}^{2})\leq ((1+\epsilon)^2-\frac{(C_1-1)^2}{4})n+o(n)<n$ (for $\epsilon $ small enough and large enough $n$). By Lemma \ref{dolne}, this gives a contradiction and ends the proof of this subcase.

\textbf{Subcase 2.} Assume $x> C$. Using the calculations from the previous subcase we see that if the intersection of the strips $S_{1}, S_{2}$, and $S_{3}$ is a hexagon or a triangle, then it contains less than $p_{1}^{2}+ o(p_{1}^{2})$ points from $\mathcal{O}_{K}$ (for $\epsilon $ small enough and large enough $n$). Again, by Lemma \ref{dolne}, this gives a contradiction. It remains to consider the case when the intersection of the strips $S_{1}$, $S_2$, and $S_3$ is empty, but this obviously gives a contradiction. This ends the proof of Theorem \ref{3}.

\end{proof}

\section{Construction of $n$-universal sets with $n+2$ elements}
\begin{theorem}
Let $A$ be a Dedekind domain. Then for any non-negative $n$ there exists an $n$-universal set $E_n \subset A$ with $n+2$ elements. In fact, one can construct an increasing family $E_0\subset E_1\subset E_2 \subset \ldots$ of $n$-universal sets $E_n$ in $A$ with $n+2$ elements.
\end{theorem}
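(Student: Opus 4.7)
The plan is to proceed by induction on $n$, constructing $E_n$ by adjoining a single new element to $E_{n-1}$; this automatically yields the increasing chain $E_0\subset E_1\subset\cdots$. For the base case $n=0$, the set $E_0=\{0,1\}$ is trivially $0$-universal since polynomials of degree at most zero are constants.

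For the inductive step, I assume $E_{n-1}$ has $n+1$ elements and is $(n-1)$-universal, and I look for $a\in A\setminus E_{n-1}$ such that $E_n:=E_{n-1}\cup\{a\}$ is $n$-universal. By Proposition \ref{p.CritNormal}, the inductive hypothesis supplies, for every codimension-one prime $\mathfrak{p}\subset A$, an $n$-element subset $T_\mathfrak{p}\subseteq E_{n-1}$ almost uniformly distributed modulo every power of $\mathfrak{p}$; and by the same criterion, $E_n$ will be $n$-universal as soon as, for each $\mathfrak{p}$, I can exhibit an $(n+1)$-element AU subset of $E_n$.

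The key point is that only finitely many primes impose a genuine condition on $a$. Let $\Pi$ be the (finite) collection of primes $\mathfrak{p}$ for which $E_{n-1}$ itself fails to be AU modulo some power of $\mathfrak{p}$: each such $\mathfrak{p}$ either divides some nonzero difference $a_i-a_j$ of elements of $E_{n-1}$, or has residue field of cardinality at most $n+1$. For every $\mathfrak{p}\notin\Pi$ the subset $E_{n-1}$ itself already serves as the required AU $(n+1)$-subset, so no condition on $a$ arises. For $\mathfrak{p}\in\Pi$ I arrange instead that $T_\mathfrak{p}\cup\{a\}$ be AU modulo every power of $\mathfrak{p}$. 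Since $T_\mathfrak{p}$ is $(n-1)$-optimal in the DVR $A_\mathfrak{p}$, it can be ordered as a Newton sequence by Remark \ref{r.Rem1}, and extending such a sequence by one more element is always possible; moreover, the extension is determined modulo a definite power $\mathfrak{p}^{k_\mathfrak{p}}$, so that the condition on $a$ reduces to a congruence $a\equiv\alpha_\mathfrak{p}\pmod{\mathfrak{p}^{k_\mathfrak{p}}}$ for some residue $\alpha_\mathfrak{p}$.

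Finally, the Chinese Remainder Theorem applied to the finitely many congruences for $\mathfrak{p}\in\Pi$ produces a solution $a\in A$; a further perturbation by an element of $\prod_{\mathfrak{p}\in\Pi}\mathfrak{p}^{k_\mathfrak{p}}$ ensures $a\notin E_{n-1}$, and we set $E_n:=E_{n-1}\cup\{a\}$. The main obstacle will be the careful local analysis of the extension step inside each $A_\mathfrak{p}$ with $\mathfrak{p}\in\Pi$: pinpointing the correct residue class $\alpha_\mathfrak{p}$ and modulus $\mathfrak{p}^{k_\mathfrak{p}}$ needed to extend the Newton sequence $T_\mathfrak{p}$ by one element, so that the local AU condition at $\mathfrak{p}$ really does reduce to a congruence of bounded modulus to which CRT applies. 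Everything else reduces to the finiteness of $\Pi$ and a direct application of Proposition \ref{p.CritNormal}.
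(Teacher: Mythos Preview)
Your proposal is correct and follows essentially the same approach as the paper: induction starting from $E_0=\{0,1\}$, identifying the finite set of ``bad'' primes (those dividing some difference in the current set), extending the local $\mathfrak{p}$-ordering by one element at each bad prime, and gluing the local congruence conditions via the Chinese Remainder Theorem. Your remark that a perturbation ensures $a\notin E_{n-1}$ is a detail the paper leaves implicit; conversely, the paper is slightly more explicit about the modulus, taking $k_\mathfrak{p}$ to be one more than the largest power of $\mathfrak{p}$ dividing any difference in $T_\mathfrak{p}\cup\{\alpha_\mathfrak{p}\}$, which is exactly the ``careful local analysis'' you flag as the remaining step. (A minor cleanup: in your description of $\Pi$, the clause ``or has residue field of cardinality at most $n+1$'' is redundant, since any such prime already divides some difference by pigeonhole; the finiteness of $\Pi$ comes solely from the fact that a nonzero element of a Dedekind domain lies in only finitely many primes.)
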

\begin{proof}
We will construct sets $E_n$ inductively. Set $E_0=\{0,1\}$. Suppose we have constructed an $n$-universal set $E_n$ with $n+2$ elements. We will show that by adding an appropriate element we can extend $E_n$ to an $(n+1)$-universal set $E_{n+1}$. Note that for almost every prime ideal $\mathfrak{p}$ of $\mathcal{O}_{K}$ the set $E_{n}$ is already almost uniformly distributed modulo powers of $\mathfrak{p}$. This is the case for all prime ideals $\mathfrak{p}$ which do not contain any difference of two elements from $E_{n}$. Therefore, in order to construct the set $E_{n+1}$, we only need to look at a finite set of prime ideals $S_n=\{\mathfrak{p}\mid \frak p\supset Vol(E_n)\}$. For any such prime ideal $\mathfrak{p}\in S_{n}$, there exists a subset of $E_{n}$ with $n+1$ elements which is almost uniformly distributed modulo powers of $\mathfrak{p}$. We can extend such a subset by one element $x_{\mathfrak{p}}$ so that the extended set remains almost uniformly distributed. Denote by $\nu_{\mathfrak{p}}$ the highest power of a prime ideal $\mathfrak{p}\in S_n$ dividing a difference of a pair of elements in $E_n\cup \{x_\frak p\}$. Now, by Chinese Remainder Theorem we can find an element $x_n$ in $\mathcal{O}_{K}$ such that $x_n \equiv x_{\mathfrak{p}}\pmod{\mathfrak{p}^{\nu_{\mathfrak{p}}+1}}$ for each $\mathfrak{p}\in S_{n}$. Put $E_{n+1}=E_{n} \cup \{x_n\}$. Then the set $E_{n+1}$ contains for any prime $\mathfrak{p}$ a subset with $n+2$ elements which is almost uniformly distributed modulo powers of $\mathfrak{p}$. Hence $E_{n+1}$ is $(n+1)$-universal.
\end{proof}
Consider this construction in the case when $A=\mathcal{O}_K$ is the ring of integers in a number field $K$. In the $n$-th step of this construction, we need to add an element $x_n$ satisfying a simultaneous congruence modulo all primes (and powers of primes) modulo which $E_n$ fails to be almost uniformly distributed. These are exactly the prime powers dividing $Vol(E_n)/\prod_{m=1}^{n+1} m!_K$. For future reference, let us denote the set of these prime powers by $S_n'$. In the proof, we have used the Chinese Remainder Theorem to justify that such $x_n$ always exists. In practice, in order to find $x_n$, we need to solve a system of congruences $x_n\equiv x_{\frak{p}} \pmod{\frak{p}^k}$ for $p^k \in S_n'$. The time needed to solve such a system of congruences is polynomial in the logarithm of the product of norms of ideals in $S_n'$. Consequently, the time needed to find $x_n$ is polynomial in $\log N(Vol(E_n)/\prod_{m=1}^{n+1} m!_K)$. It follows that in order to have a good control on the running time of our construction, we have to control the growth of $U_n=N(Vol(E_n)/\prod_{m=1}^{n+1} m!_K)$. Even for $A=\Z$, this task proves to be difficult, because the solution of a system of congruences modulo elements of $S_n'$ cannot be, in general, bounded by anything less than the product of all numbers in $S_n'$. In the case of $\mathcal{O}_K$, we cannot bound the norm of the solution by anything significantly smaller than the product of norms of elements in $S_n'$. An easy computation shows that if in each step the norm of $x_n$ is of order $U_n$ and for at least one $n_0$ the number $U_{n_0}$ is big enough, then the bound that we will get on the consecutive $U_n$'s is an exponential tower of height $2$, i.e., $U_n\ll \exp(a\exp(bn))$ for some $a,b>0$. The expected time needed to find $x_n$ is therefore polynomial in $\log U_n\ll \log(\exp(a\exp(bn))=a\exp(bn)$. The latter bound is exponential in $n$ which means that this construction is likely to be impractical for large $n$. In Section \ref{s.Probabil}, we present an alternative probabilistic method that may be used to construct $(n+d)$-universal sets for $d=[K:\Q]$.

\section{Euler-Kronecker constants}\label{s.EKconstant}
The problem of existence of $n$-optimal sets  in general number fields seems  much harder than in the imaginary quadratic case. In the proof of Theorem \ref{t.ImQuad}, we relied on the fact that collapsing the set reduces its volume, hence an $n$-optimal set is necessarily collapsed with respect to every direction. This technique requires the norm $N_{K/\Q}$ to be convex, which (essentially by Dirichlet's unit theorem) holds only when $K=\Q$ or $K$ is an imaginary quadratic number field. In this section, we compute the asymptotic growth of the norm of the volume of $n$-universal sets. As a by-product of our attempts to prove non-existence of $n$-optimal sets in arbitrary number fields, we obtain a lower bound on the values of Euler-Kronecker constants.

Let us recall the definition of Euler-Kronecker constants.
\begin{definition}\cite{Ihara1}
Let $\zeta_K(s)$ be the Dedekind zeta function of a number field $K$. Let 
$$\zeta_K(s)=\frac{c_{-1}}{(s-1)}+c_0+c_1(s-1)+\ldots$$ be the Laurent expansion of $\zeta_K$ at $s=1$. We define the \textbf{Euler-Kronecker constant} $\gamma_K$ as the quotient $c_0/c_{-1}$, or equivalently as the constant term of the Laurent expansion of $\zeta_K'/\zeta_K$ at $s=1$.
\end{definition}
For $K=\Q$, the constant $\gamma_\Q$ is the Euler-Mascheroni constant $\gamma$ given by the formula
$$\gamma_{\Q}=\lim_{n\to\infty}\left(\sum_{i=1}^n\frac{1}{i}-\log n\right).$$ For more  information on the Euler-Kronecker constants, we refer the reader to the article of Ihara \cite{Ihara1}. Euler-Kronecker constants arise in our considerations via the following theorem due to M. Lamoureux \cite{MLam14}.
\begin{theorem}\textup{(}\cite[Theorem 1.2.4]{MLam14}\textup{)}
$$\log n!_K=n\log n-n(1+\gamma_K-\gamma_\Q)+o(n).$$
\end{theorem}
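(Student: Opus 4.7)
The quantity $\log n!_K$ must mean $\log N(n!_K)$, since $n!_K$ is an ideal. The plan is to identify $\log N(n!_K)$ as a partial sum of an arithmetic function whose Dirichlet series is known explicitly in terms of $\zeta_\Q$ and $\zeta_K$, and then to read off the two leading terms from the residue of a double pole at $s=1$ by a standard Perron-contour argument.

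First, using the elementary identity $\lfloor n/m\rfloor = \#\{1\leq k\leq n:m\mid k\}$, I would rewrite
$$\log N(n!_K) = \sum_{\mathfrak{p},\,i\geq 1}\log N(\mathfrak{p})\lfloor n/N(\mathfrak{p}^i)\rfloor = \sum_{k=1}^n F_K(k),$$
where $F_K(k) = \sum_{m\mid k}\Lambda_K^*(m)$ and $\Lambda_K^*(m) = \sum_{N(\mathfrak{p}^i)=m}\log N(\mathfrak{p})$. The function $\Lambda_K^*$ is precisely the Dirichlet coefficient of $-\zeta_K'(s)/\zeta_K(s)$, so the Dirichlet series of $F_K$ factors as
$$D(s) := \sum_{k\geq 1}\frac{F_K(k)}{k^s} = \zeta_\Q(s)\cdot\Bigl(-\frac{\zeta_K'(s)}{\zeta_K(s)}\Bigr).$$

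Next I would apply Perron's formula and shift the contour past $\Re(s)=1$, capturing the single pole of $D(s)n^s/s$ at $s=1$. From $\zeta_\Q(s) = \tfrac{1}{s-1}+\gamma_\Q+O(s-1)$ and the definition $\gamma_K=c_0/c_{-1}$ (which immediately yields $-\zeta_K'(s)/\zeta_K(s) = \tfrac{1}{s-1}-\gamma_K+O(s-1)$), multiplication gives
$$D(s) = \frac{1}{(s-1)^2}+\frac{\gamma_\Q-\gamma_K}{s-1}+O(1),$$
so $D(s)$ has a double pole at $s=1$. Expanding $n^s/s = n\bigl[1+(s-1)(\log n-1)+O((s-1)^2)\bigr]$ around $s=1$, the residue of $D(s)n^s/s$ at $s=1$ comes out to
$$n(\log n - 1) + n(\gamma_\Q-\gamma_K) = n\log n - n(1+\gamma_K-\gamma_\Q),$$
which matches the predicted main term exactly.

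The main obstacle is showing that the shifted contour integral together with the truncation errors in Perron's formula contribute only $o(n)$, rather than $O(n)$ which would be insufficient. This is the genuinely analytic part: it requires the non-vanishing of $\zeta_K$ on the line $\Re(s)=1$ plus standard growth bounds on $-\zeta_K'/\zeta_K$ in a thin strip to the left of that line, which amounts to the prime ideal theorem for $K$ in quantitative form. A more elementary route, avoiding complex contour shifts, combines Ihara's formulation $\sum_{N(\mathfrak{p}^i)\leq x}\log N(\mathfrak{p})/N(\mathfrak{p}^i) = \log x - \gamma_K + o(1)$ (which extracts the $n\log n - n\gamma_K$ piece by Abel summation against $\psi_K(x)=x+o(x)$) with a separate evaluation of the fractional-part contribution $\sum_{N(\mathfrak{p}^i)\leq n}\log N(\mathfrak{p})\{n/N(\mathfrak{p}^i)\}$; bootstrapping this off the classical Stirling asymptotic $\log n! = n\log n - n + o(n)$ (the case $K=\Q$, where the computation forces the fractional-part sum to equal $n(1-\gamma_\Q)+o(n)$) isolates the remaining correction $-n(1-\gamma_\Q)$ and yields the stated formula. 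Either way, the essential analytic input is the prime ideal theorem for $K$.
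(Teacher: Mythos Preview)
The paper does not prove this theorem at all: it is quoted verbatim from Lamoureux's thesis \cite{MLam14} and used as a black box to derive Corollary~\ref{c.FactorialEst}. So there is no ``paper's own proof'' to compare against; your proposal is a self-contained sketch of how one would establish the cited result.

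Your Perron/contour approach is correct and is essentially the standard one. The identification $\log N(n!_K)=\sum_{k\le n}F_K(k)$ with generating series $\zeta_\Q(s)\cdot(-\zeta_K'/\zeta_K)(s)$ is right, the Laurent computation at $s=1$ is right, and you correctly flag that the genuine work lies in bounding the shifted contour by $o(n)$, which needs the prime ideal theorem (equivalently, $\zeta_K\neq 0$ on $\Re(s)=1$ plus the usual zero-free strip estimates). That is exactly the analytic input Lamoureux uses.

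One caution about your ``more elementary route'': as written it has a gap. From Ihara's Mertens-type estimate you correctly extract $n\log n-n\gamma_K$, leaving the fractional-part sum $S_K(n)=\sum_{N(\mathfrak p^i)\le n}\log N(\mathfrak p)\,\{n/N(\mathfrak p^i)\}$. You then invoke Stirling to pin down $S_\Q(n)=n(1-\gamma_\Q)+o(n)$ and assert that this ``isolates the remaining correction'' for general $K$. But Stirling only tells you the value of $S_\Q$, not of $S_K$; the implicit claim $S_K(n)=S_\Q(n)+o(n)$ is exactly equivalent to the theorem you are trying to prove. To make the elementary route work you would need an independent argument that $S_K(n)=n(1-\gamma_\Q)+o(n)$ for every $K$ (e.g.\ via the hyperbola method applied to $\sum_{j\le n}\psi_K(n/j)$ together with $\psi_K(x)=x+o(x)$, handled carefully enough that the accumulated errors stay $o(n)$). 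This is doable but is not a free ``bootstrap'' from the $K=\Q$ case. Your contour argument avoids this issue entirely, so I would lead with that.
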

Using Proposition \ref{p.NunivVol}, we immediately obtain the following corollary.
\begin{corollary}\label{c.FactorialEst}
Let $S$ be an $n$-optimal subset of $\mathcal{O}_K$. Then 
$$\log N(Vol(S))={n^2}\log n-\frac{n^2}{2}-{n^2}(1+\gamma_K-\gamma_\Q)+o(n^2).$$
Moreover, for every subset $S'\subset \mathcal{O}_K$ with $n+1$ elements we have 
$$\log N(Vol(S'))\geq {n^2}\log n-\frac{n^2}{2}-{n^2}(1+\gamma_K-\gamma_\Q)+o(n^2).$$
\end{corollary}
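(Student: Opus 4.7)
The plan is to combine Proposition \ref{p.NunivVol}(ii) with Lamoureux's theorem and do a straightforward asymptotic summation. If $S$ is $n$-optimal, then $Vol(S)=\bigl(\prod_{i=1}^n i!_K\bigr)^2$ as ideals, so taking norms and logarithms reduces the first statement to
$$\log N(Vol(S))=2\sum_{i=1}^n \log N(i!_K).$$
Substituting Lamoureux's expansion $\log N(i!_K)=i\log i-i(1+\gamma_K-\gamma_\Q)+o(i)$ term by term converts the problem into an elementary asymptotic estimate.

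The three sums needed are standard: $\sum_{i=1}^n i\log i=\tfrac{n^2}{2}\log n-\tfrac{n^2}{4}+O(n\log n)$ by partial summation (or Euler--Maclaurin), $\sum_{i=1}^n i=\tfrac{n^2}{2}+O(n)$, and $\sum_{i=1}^n o(i)=o(n^2)$ by a routine Ces\`aro argument (given $\varepsilon>0$, choose $N$ so that $|o(i)|\leq \varepsilon i$ for $i\geq N$; the initial segment contributes $O(1)$ and the tail at most $\varepsilon n^2/2$). Doubling and collecting terms gives exactly $n^2\log n-\tfrac{n^2}{2}-n^2(1+\gamma_K-\gamma_\Q)+o(n^2)$, as required.

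For the unconditional lower bound the key observation is that even when no global $n$-optimal set exists in $\mathcal{O}_K$, a local $n$-optimal set always exists in the completion or localisation at each nonzero prime, since $(\mathcal{O}_K)_{\mathfrak{p}}$ is a discrete valuation ring. Lemma \ref{l.locAEq} then shows that the minimum of $v_{\mathfrak{p}}(Vol(T))$ over all $(n+1)$-element subsets $T$ of $(\mathcal{O}_K)_\mathfrak{p}$ equals $2\sum_{k=1}^n v_\mathfrak{p}(k!_K)$. Applied to any global $(n+1)$-subset $S'\subset \mathcal{O}_K$ regarded as a subset of each localisation, this yields $v_{\mathfrak{p}}(Vol(S'))\geq v_{\mathfrak{p}}\bigl((\prod_{i=1}^n i!_K)^2\bigr)$ at every $\mathfrak{p}$, hence $\bigl(\prod_{i=1}^n i!_K\bigr)^2 \mid Vol(S')$ in the Dedekind domain $\mathcal{O}_K$. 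Taking norms and re-using the asymptotic calculated in the first paragraph supplies the lower bound.

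There is no real obstacle: the whole argument is substitution into Lamoureux's theorem plus a local-to-global volume comparison. If anything deserves attention, it is just the bookkeeping for the $o(i)$ error term in the sum, handled by the Ces\`aro observation above.
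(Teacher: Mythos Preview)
Your proof is correct and follows the same approach as the paper, which simply says ``using Proposition \ref{p.NunivVol}, we immediately obtain the following corollary'' and leaves all details to the reader. You have filled in those details accurately: the first part via Proposition \ref{p.NunivVol}(ii) and the summation of Lamoureux's asymptotic, and the second part via the local minimality in Lemma \ref{l.locAEq}, which is indeed the right way to obtain the divisibility $(\prod_{i=1}^n i!_K)^2 \mid Vol(S')$ without assuming the global existence of an $n$-optimal set.
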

One can try to prove  non-existence of $n$-optimal subsets in number fields by combining the above estimate with a lower bound on $Vol(S)$ obtained by some geometric arguments. This proves to be problematic in fields which have an infinite group of integral units due to  non-convexity of the norm.
However, one can use the estimates from Corollary \ref{c.FactorialEst} to obtain the following analytic theorem.
\begin{theorem}\label{t.LogIneq}
Let $U$ be an open bounded subset of $\R^d$. For any $x=(x_1,\ldots, x_d)\in \R^d$, write $\|x\|=\prod_{i=1}^d|x_i|$. (Note that this notation is nonstandard as $\|\cdot\|$ is not a norm on $\R^d$.)
 Then 
$$\int_U\int_U\log \|x-y\|dxdy\geq m(U)^2(c_d+\log m(U)),$$
where $c_d>0$ is a constant depending only on $d$ and $m(U)$ is the Lebesgue measure of $U$. 
\end{theorem}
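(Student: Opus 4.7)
The plan is to discretize the integral via the Minkowski embedding of a totally real number field $K$ of degree $d$ and then invoke the lower bound from Corollary \ref{c.FactorialEst} on the volume of subsets of $\mathcal{O}_K$. Totally real fields of every degree $d$ exist (e.g.\ subfields of cyclotomic fields, or $\Q[x]/(f)$ for an $f$ with all real roots), and the decisive property of the Minkowski embedding $\sigma=(\sigma_1,\ldots,\sigma_d):\mathcal{O}_K \hookrightarrow \R^d$ in this setting is that the nonstandard ``coordinate product'' $\|\cdot\|$ from the statement pulls back to the absolute norm: $\|\sigma(\alpha)\|=\prod_{i}|\sigma_i(\alpha)|=|N_{K/\Q}(\alpha)|$. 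This converts the analytic inequality into a purely arithmetic statement about sums of logarithms of norms of differences of elements of $\mathcal{O}_K$, exactly the quantity controlled by $\log N(Vol(\cdot))$.

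Concretely, fix such a $K$, set $\Lambda=\sigma(\mathcal{O}_K)$ (a lattice of covolume $V=\sqrt{|\Delta_K|}$), and for a scaling parameter $t\to\infty$ put $\Lambda_t=\tfrac{1}{t}\Lambda$ and $T_t=\{\alpha\in\mathcal{O}_K:\sigma(\alpha)\in tU\}$, of cardinality $N=|T_t|\sim t^d m(U)/V$. For distinct $\alpha,\beta\in T_t$ with $x=\sigma(\alpha)/t$ and $y=\sigma(\beta)/t$ in $\Lambda_t\cap U$, we have $\|x-y\|=|N_{K/\Q}(\alpha-\beta)|/t^d$, so
\begin{equation*}
\log N(Vol(T_t))=\sum_{x\neq y\in\Lambda_t\cap U}\log\|x-y\|+dN(N-1)\log t.
\end{equation*}
Applying Corollary \ref{c.FactorialEst} to $T_t$ and using $\log N=d\log t+\log(m(U)/V)+o(1)$ makes the $d\log t$ terms cancel, leaving
\begin{equation*}
\sum_{x\neq y\in\Lambda_t\cap U}\log\|x-y\|\ \geq\ N^2\bigl(\log m(U)-\log V-\tfrac{3}{2}+\gamma_\Q-\gamma_K\bigr)+o(N^2).
\end{equation*}

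The last ingredient is the Riemann-sum convergence
\begin{equation*}
\frac{1}{N^2}\sum_{x\neq y\in\Lambda_t\cap U}\log\|x-y\|\ \xrightarrow[t\to\infty]{}\ \frac{1}{m(U)^2}\iint_U\log\|x-y\|\,dx\,dy.
\end{equation*}
Combining this with the preceding inequality yields the theorem with the explicit value $c_d=-\log V-\tfrac{3}{2}+\gamma_\Q-\gamma_K$, a constant depending only on $d$ and the fixed choice of $K$; optimizing over totally real $K$ of degree $d$ gives the sharpest bound obtainable by this method.

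The principal technical obstacle is justifying the Riemann-sum limit, since $\log\|x-y\|$ is unbounded along the coordinate hyperplanes $\{x_i=y_i\}$. I would handle this via a truncation argument: fix $\varepsilon>0$ and split the pair domain $U\times U$ into $\{\|x-y\|\geq\varepsilon\}$ and its complement. On the first region the integrand is bounded and continuous, so classical Riemann-sum convergence for continuous functions applies. On the second region, the integral contribution is small (uniformly in $t$) by local integrability of $\log$, while the sum contribution is controlled by combining the elementary lower bound $\|x-y\|\geq t^{-d}$ valid for distinct lattice pairs (since $|N_{K/\Q}(\alpha-\beta)|\geq 1$) with a volumetric count of lattice pairs inside the thin region $\{z\in\R^d:\|z\|<\varepsilon\}$. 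As a sanity check, the case $d=1$, $K=\Q$ produces $c_1=-\tfrac{3}{2}$, which is sharp since a direct computation yields $\iint_{[0,1]^2}\log|u-v|\,du\,dv=-\tfrac{3}{2}$.
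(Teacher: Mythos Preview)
Your overall strategy is exactly the paper's: embed a totally real field of degree $d$ via $\sigma$, identify $\|\sigma(\alpha)\|$ with $|N_{K/\Q}(\alpha)|$, apply Corollary~\ref{c.FactorialEst} to the finite set $T_t$, and pass to the limit. Your explicit constant $c_d=-\tfrac{1}{2}\log|\Delta_K|-\tfrac{3}{2}+\gamma_\Q-\gamma_K$ (and taking the supremum over $K$) agrees with the paper's $c_{d,K}$.

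The one point where the paper is cleaner is the passage to the limit. You assert the full Riemann-sum convergence
\[
\frac{1}{N^2}\sum_{x\neq y}\log\|x-y\|\ \longrightarrow\ \frac{1}{m(U)^2}\iint_{U\times U}\log\|x-y\|\,dx\,dy,
\]
and sketch a two-region argument. But on the thin set $\{\|x-y\|<\varepsilon\}$ your bound $\|x-y\|\geq t^{-d}$ only gives $|\log\|x-y\||\leq d\log t$, and the volumetric pair count there is of order $N^2\cdot\varepsilon(\log 1/\varepsilon)^{d-1}$; the product contributes an error of order $\log t\cdot\varepsilon(\log 1/\varepsilon)^{d-1}$ to $\frac{1}{N^2}\sum$, which diverges as $t\to\infty$ for fixed $\varepsilon$. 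So the sketch does not establish full convergence.

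Fortunately you do not need full convergence for the theorem: you only need $\limsup_N \frac{1}{N^2}\sum\leq \frac{1}{m(U)^2}\iint$, and this is immediate from your decomposition since the thin-region sum is $\leq 0$ (each term has $\log\|x-y\|<\log\varepsilon<0$). The paper achieves the same one-sided comparison more slickly: it works with the bounded continuous truncation $\max\{M,\log\|x-y\|\}$, applies weak-$*$ convergence of the normalized counting measures to get an equality, then uses $\max\{M,\cdot\}\geq\cdot$ on the discrete side and lets $M\to-\infty$ on the continuous side via monotone convergence. This avoids any discussion of lattice-point counts in the singular region.
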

\begin{proof}
Let us first treat the case $m(U)=1$. We will reduce the proof to this case by scaling. Choose some totally real number field $K$ of degree $d$ and let $\sigma_1,\ldots, \sigma_d$ be all the embeddings $K\to \R$. Identify the ring of integers $\mathcal{O}_K$ of $K$ with the lattice $L$ in $\R^d$ by means of the Dirichlet embedding $K \to \R^d$, $x\mapsto (\sigma_i(x))_{i=1,\ldots,d}$. For a natural number $n$, let $A_n= |\frac{1}{n}L\cap U|$. Consider the normalized counting measures $\mu_n$ on the sets $(\frac{1}{n}L\cap U)^2$. The sequence $\mu_n$ is a sequence of probability measures which converges in the weak-* topology to the Lebesgue measure on $U\times U$ as $n\to\infty$. Choose $M<0$. Then the function $U\times U \to \R$, $(x,y)\mapsto \max\{M,\log\|x-y\|\}$ is continuous and bounded, so by the weak-* convergence of measures we get 
\begin{align*}\int_U\int_U \max\{M,\log\|x-y\|\}dxdy =&
\lim_{n\to\infty}\frac{1}{A_n^2}\sum_{x,y\in \frac{1}{n}L\cap U}\max\{M,\log\|x-y\|\} \geq\\ & \lim_{n\to\infty}\frac{1}{A_n^2}\sum_{\substack{x,y\in \frac{1}{n}L\cap U \\ x\neq y}} \log\|x-y\|.
\end{align*}
The rightmost term can be rewritten as 
\begin{align*}
&\frac{1}{A_n^2}\sum_{\substack{x,y\in \frac{1}{n}L\cap U \\ x\neq y}} \log\|x-y\|=\\
&\frac{1}{A_n^2}\sum_{\substack{x,y\in L\cap nU\\x\neq y} }(\log\|x-y\|- d\log n)=\\
&\frac{1}{A_n^2}\sum_{\substack{x,y\in L\cap nU\\x\neq y}}\log | N(x-y)|- \frac{d(A_n-1)}{A_n} \log n =\\
&\frac{1}{A_n^2}\log N(Vol(L\cap nU))-d\log n+\frac{d}{A_n}\log n.
\end{align*}
By Corollary \ref{c.FactorialEst}, we bound the last expression from below by \begin{align} &\geq\frac{1}{A_n^2}(A_n^2\log A_n-\frac{A_n^2}{2}-A_n^2(1+\gamma_K-\gamma_\Q)) -d\log n+\frac{d}{A_n}\log n+o(1)\\ &= \log A_n-\frac{3}{2}-\gamma_K+\gamma_\Q-d\log n+\frac{d}{A_n}\log n+o(1).\label{dpa123}\end{align} Since $U$ is an open set of measure $1$ and $L$ is a lattice in $\R^d$ of covolume $\sqrt{|\Delta_K|}$, $A_n=\frac{n^d}{\sqrt{|\Delta_K|}}+o(n^d)$. Hence, $$\log A_n = d\log n-\frac 1 2 \log |\Delta_K| + o(1)$$ and $\frac{d}{A_n}\log n=o(1)$. Therefore, we can bound the expression (\ref{dpa123}) from below by
\begin{align*}
&-\frac{1}{2}\log |\Delta_K|-\frac 3 2 -\gamma_K+\gamma_{\Q}+o(1).
\end{align*}
Let us define the constant $c_{d,K}=-\frac{3}{2}-\gamma_K+\gamma_Q-\frac 1 2 \log |\Delta_K|$ and set $c_d=\sup c_{d,K}$, where supremum is taken over all totally real fields $K$ of degree $d$. Then the inequality above shows that 
$$\int_U\int_U \max\{M,\log\|x-y\|\}dxdy\geq c_d.$$ But 
$$\lim_{M\to -\infty}\int_U\int_U \max\{M,\log\|x-y\|\}dxdy=\int_U\int_U \log\|x-y\|dxdy,$$ which finishes the proof in the case $m(U)=1$.
It remains to reduce the proof to this case. Let $U\subset \R^d$ be any open bounded subset. For any $\lambda>0$, let $U_{\lambda}$ denote the set $U$ scaled by a factor of $\lambda$. Integrating by substitution, we get
\begin{align*}\int_{U_\lambda}\int_{U_\lambda}\log \|x-y\|dxdy= &\lambda^{2d}\int_U\int_U \log\|\lambda(x-y)\|dxdy\\ =&\lambda^{2d}\int_U\int_U(\log\|x-y\| + d\log{\lambda}) dxdy.\end{align*}
For an arbitrary set $U$ of finite measure $m(U)$, we can scale it by a factor of $\lambda=m(U)^{-1/d}$ so that it becomes of measure one and apply the known result to this measure one set. This gives
$$\int_{U}\int_{U}\log \|x-y\|dxdy\geq m(U)^2(c_d+\log m(U)).\qedhere$$
\end{proof}
As a corollary we obtain a lower bound on the value of $\gamma_K$. 
\begin{corollary}Let $K$ be a totally real number field. Then
$$\gamma_K\geq -\frac 1 2 \log |\Delta_K| +\frac 3 2 d - \frac 3 2 + \gamma_\Q.$$
\end{corollary}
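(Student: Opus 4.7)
The plan is to apply Theorem \ref{t.LogIneq} to the unit cube $U = [0,1]^d \subset \R^d$, which has Lebesgue measure $m(U) = 1$. Under this choice, the conclusion of the theorem reduces to
$$\int_{[0,1]^d}\int_{[0,1]^d} \log\|x-y\|\,dx\,dy \geq c_d.$$
The strategy is to evaluate the left-hand side in closed form, obtain an explicit upper bound on $c_d$, and then use the fact that $c_{d,K} = -\tfrac{3}{2} - \gamma_K + \gamma_\Q - \tfrac{1}{2}\log|\Delta_K| \leq c_d$ (by the definition of $c_d$ as a supremum over totally real fields of degree $d$ in the proof of Theorem \ref{t.LogIneq}) to recover a lower bound on $\gamma_K$.

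The key computation is straightforward because the ``norm'' $\|\cdot\|$ factorises coordinate-wise: writing $\log\|x-y\| = \sum_{i=1}^d \log|x_i - y_i|$ and using that $[0,1]^d$ is a product of unit intervals, Fubini's theorem gives
$$\int_{[0,1]^d}\int_{[0,1]^d} \log\|x-y\|\,dx\,dy = d\int_0^1\int_0^1 \log|x-y|\,dx\,dy.$$
A standard one-dimensional evaluation (reducing to $2\int_0^1(x\log x - x)\,dx$ by symmetry and the substitution $u = x-y$, then integrating by parts) yields $\int_0^1\int_0^1 \log|x-y|\,dx\,dy = -\tfrac{3}{2}$, so the full $2d$-dimensional integral equals $-\tfrac{3d}{2}$.

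Combining the two steps produces $c_{d,K} \leq c_d \leq -\tfrac{3d}{2}$ for every totally real number field $K$ of degree $d$. Substituting the explicit formula for $c_{d,K}$ and rearranging gives exactly
$$\gamma_K \geq \tfrac{3d}{2} - \tfrac{3}{2} + \gamma_\Q - \tfrac{1}{2}\log|\Delta_K|.$$
There is no significant obstacle here: the corollary drops out of Theorem \ref{t.LogIneq} once one recognises the cube as a convenient test domain whose log-energy integral is explicitly computable. The only genuine piece of work is the one-dimensional integral, and the crucial structural feature being exploited is the product form of the domain together with the additive splitting of $\log \|\cdot\|$ into coordinate contributions.
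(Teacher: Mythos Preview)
Your proposal is correct and follows essentially the same approach as the paper: apply Theorem \ref{t.LogIneq} to the unit cube (the paper uses the open cube $(0,1)^d$, as the theorem is stated for open sets, but the integral is unchanged), reduce via Fubini and the additive splitting of $\log\|\cdot\|$ to the one-dimensional integral $\int_0^1\int_0^1\log|x-y|\,dx\,dy=-\tfrac{3}{2}$, and then rearrange the resulting inequality $c_{d,K}\leq c_d\leq -\tfrac{3d}{2}$.
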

\begin{proof}
Let $U=(0,1)^d\in\R^d$. Then $m(U)=1$ and by Theorem \ref{t.LogIneq}, we have 
$$\int_U\int_U\log \|x-y\|dxdy=d\int_0^1\int_0^1\log |x-y|dxdy=-\frac{3}{2}d\geq c_d\geq c_{d,K}.$$
Hence, by the definition of $c_{d,K}$ we get
$$-\frac{3}{2} d\geq -\frac 3 2 -\gamma_K+\gamma_Q-\frac 1 2 \log |\Delta_K|.$$
The desired inequality easily follows.
\end{proof}
The main term of the last inequality is $\log\sqrt{|\Delta_K|}$. A very similar, but slightly stronger inequality has been proved by Ihara in \cite{Ihara1}. More precisely, he obtained the following bound (\cite[Proposition 3]{Ihara1})  $\gamma_K > -\frac 1 2 \log |\Delta_K | + \frac{\gamma_{\Q}+\log(4\pi)}{2}  d - 1$. The coefficient at $d$ is $(\gamma_{\Q}+\log(4\pi))/2 = 1.554\ldots$, so our bound is slightly weaker.

\section{A probabilistic construction}\label{s.Probabil}
In this section, we present a probabilistic method to construct an $n$-universal subset of $\mathcal{O}_K$ with $n+d$ elements where $d=\dim_\Q K$. 
\subsection{Outline of the construction.}
Fix a natural number $m\geq 1$. In order to choose a random subset $S$ of the ring $\mathcal{O}_K$ with $m+n=k$ elements, we fix $k$ independent random variables $\xi_1,\ldots,\xi_k$ with values in $\mathcal{O}_K$ and set $S=\{\xi_1,\ldots,\xi_k\}$. By Corollary \ref{p.n-univCondition}, in order to check if $S$ is $n$-universal, all we need to do is to verify that for every prime ideal $\frak p$ of $\mathcal{O}_K$, $S$ contains $n+1$ elements which are almost uniformly distributed modulo powers of $\frak p$. As we have pointed out in Definition \ref{d.AUD}, the condition simplifies when $|\mathcal{O}_K/\frak p|\geq n+1$. The set $S$ contains a subset of size $n+1$ which is almost uniformly distributed modulo powers of $\frak p$ if and only if $|\pi_{\frak p}(S)|\geq n+1$, where $\pi_{\frak p}:\mathcal{O}_K\to \mathcal{O}_K/\frak p$ is the canonical projection. The probability of this event depends only on the variables  $\pi_{\frak p}(\xi_1),\ldots,\pi_{\frak p}(\xi_k)$. Thus, it can be estimated relatively easy. This suggests the following strategy to choose a random subset of $\mathcal{O}_K$:
\begin{itemize}
\item Fix a probability measure $\mu$ on $\mathcal{O}_K$.
\item Fix $L>2(n+1)$ (for technical reasons it is more convenient to assume $L>2n+2$ rather than just $L>n+1$), and choose elements $a_1,\ldots,a_k$ such that $\{a_1,\ldots,a_{n+1}\}$ is almost uniformly distributed modulo powers of $\frak p$ for every prime ideal $\frak p$ with $|\mathcal{O}_K/\frak p|\leq L$. Again for technical reasons we choose $a_1,\ldots,a_k$ such that $\{a_1,\ldots,a_k\}$ has at least $n+1$ distinct elements modulo $L!$.
\item Let $\psi_1,\ldots,\psi_k$ be independent random $\mathcal{O}_K$-valued variables with distribution $\mu$.
Set $\xi_i=a_i+L!\psi_i$. Then the random set $S=\{\xi_1,\ldots,\xi_k\}$ contains a subset of size $n+1$ which is almost uniformly distributed modulo every power of a prime ideal $\frak p$ satisfying $N(\frak p)\leq L$ \footnote{This is the contents of Lemma \ref{l.Expliq}.}. Thus, in order to compute the probability that $S$ is $n$-universal we only need to estimate the probabilities that $|\pi_{\frak p}(S)|\geq n+1$ for every prime ideal with norm bigger than $L$.
\end{itemize}
The reasons why we have imposed these technical conditions will become clear during the proof.
We have outlined the general way in which we can construct our set. 
Until now we have made no reference to a random walk on $\mathcal{O}_K$, but we shall use them to construct the measure $\mu$. This will allow us to interpret the elements of $S$ as steps of $n+d$ independent random walks on $\mathcal{O}_K$. In the following section, we prove some estimates on the probability that the set $S$ constructed in the way described above is $n$-universal. 
\begin{lemma}\label{l.Expliq}
The set $S=\{\xi_1,\ldots,\xi_k\}$, constructed as in the outline, always contains a subset of size $n+1$ which is almost uniformly distributed modulo powers of prime ideals of norm not exceeding $L$.
\end{lemma}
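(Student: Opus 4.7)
The plan is to fix a prime ideal $\mathfrak{p}$ with $N(\mathfrak{p})\leq L$ and an exponent $j\geq 1$, and to show that $\{\xi_1,\ldots,\xi_{n+1}\}$ is almost uniformly distributed modulo $\mathfrak{p}^j$ in both regimes of the dichotomy $j\leq v_{\mathfrak{p}}(L!)$ versus $j>v_{\mathfrak{p}}(L!)$. The first regime is immediate: $\mathfrak{p}^j$ divides $(L!)$, so $\xi_i\equiv a_i\pmod{\mathfrak{p}^j}$ for every $i$, and the almost uniform distribution of $\{a_1,\ldots,a_{n+1}\}$ modulo $\mathfrak{p}^j$ built into the construction of the $a_i$ transfers directly to $\{\xi_1,\ldots,\xi_{n+1}\}$.

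The more delicate regime $j>v_{\mathfrak{p}}(L!)$ I would control via the ultrametric identity
$$v_{\mathfrak{p}}(\xi_i-\xi_{i'})=v_{\mathfrak{p}}\bigl((a_i-a_{i'})+L!(\psi_i-\psi_{i'})\bigr)$$
together with the lower bound $v_{\mathfrak{p}}(L!(\psi_i-\psi_{i'}))\geq v_{\mathfrak{p}}(L!)$. If $v_{\mathfrak{p}}(a_i-a_{i'})<v_{\mathfrak{p}}(L!)$, the two summands have distinct valuations, so the valuation of the sum equals $v_{\mathfrak{p}}(a_i-a_{i'})<v_{\mathfrak{p}}(L!)<j$, forcing $\xi_i\not\equiv\xi_{i'}\pmod{\mathfrak{p}^j}$ regardless of $\psi_i,\psi_{i'}$. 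Hence it suffices to check that $a_1,\ldots,a_{n+1}$ are pairwise distinct modulo $\mathfrak{p}^{v_{\mathfrak{p}}(L!)}$; once this is in hand, the $\xi_i$'s are pairwise distinct modulo $\mathfrak{p}^j$, and since an elementary estimate from Legendre's formula gives $N(\mathfrak{p}^j)\geq N(\mathfrak{p}^{v_{\mathfrak{p}}(L!)+1})>L>n+1$, pairwise distinctness coincides with almost uniform distribution.

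The main obstacle is therefore the inequality $N(\mathfrak{p}^{v_{\mathfrak{p}}(L!)})\geq n+1$: combined with the AUD hypothesis on $\{a_1,\ldots,a_{n+1}\}$ modulo $\mathfrak{p}^{v_{\mathfrak{p}}(L!)}$, this bound yields the needed pairwise distinctness. Writing $N(\mathfrak{p})=p^f$ with $p$ the rational prime below $\mathfrak{p}$ and $e$ for the ramification index, one has $N(\mathfrak{p}^{v_{\mathfrak{p}}(L!)})=p^{fe\,v_p(L!)}\geq p^{v_p(L!)}$, and I would verify the inequality by a short case analysis: for $p\in(L/2,L]$ one has $v_p(L!)=1$ and $p>L/2>n+1$; for $\sqrt{L}<p\leq L/2$ with $k=\lfloor L/p\rfloor\geq 2$ one has $p^k\geq(L/(k+1))^k$, which exceeds $n+1$ thanks to $L>2(n+1)$; and for $p\leq\sqrt{L}$ the quantity $p^{v_p(L!)}$ is at least exponential in $L/p$ and therefore comfortably dominates $n+1$. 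The auxiliary hypothesis that $\{a_1,\ldots,a_k\}$ has at least $n+1$ distinct residues modulo $L!$ plays a complementary role: since $\xi_i\equiv a_i\pmod{L!}$, it guarantees that the set $S$ itself has cardinality at least $n+1$, so that the subset exhibited above is genuinely a subset of $S$ of the claimed size.
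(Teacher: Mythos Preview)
Your argument is correct and rests on the same core idea as the paper's: once one knows that $\mathfrak{p}^m\mid L!\,\mathcal{O}_K$ for the least $m$ with $N(\mathfrak{p})^m\geq n+1$, the congruence $\xi_i\equiv a_i\pmod{\mathfrak{p}^m}$ transfers the almost uniform distribution of $\{a_1,\ldots,a_{n+1}\}$ to $\{\xi_1,\ldots,\xi_{n+1}\}$ modulo every power of $\mathfrak{p}$. The paper obtains this divisibility in one stroke---from $(N\mathfrak{p})^{m-1}<n+1<L/2$ and $N\mathfrak{p}\leq L$ it concludes that the integer $(N\mathfrak{p})^m$ divides $L!$, hence $\mathfrak{p}^m\mid L!\,\mathcal{O}_K$---whereas you take a longer route through the dichotomy on $j$ versus $v_{\mathfrak{p}}(L!)$, the ultrametric identity, and a three-case analysis on the rational prime $p$ underlying $\mathfrak{p}$. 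One small wrinkle: in your middle range $\sqrt{L}<p\leq L/2$, the bound $(L/(k+1))^k>n+1$ does not literally follow from $L>2(n+1)$ for very small $n$; the cleaner observation there is simply $p^k\geq p^2>L>n+1$, which holds since $k\geq 2$ and $p>\sqrt{L}$.
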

\begin{proof}
We have to show that $S=\{\xi_1,\ldots,\xi_k\}$ contains an $(n+1)$-element subset that is  almost uniformly distributed modulo every power of a prime ideal $\frak p$ such that  $|\mathcal{O}_K/\frak p|\leq L$. For a fixed ideal $\frak p$, the existence of an almost uniformly distributed subset of size $n+1$ depends only on the residues of $S$ modulo ${\frak p}^m$ where $m$ is the smallest integer for which $N({\frak p}^m)=N(\frak p)^m\geq n+1$. Indeed, a set with $n+1$ elements is almost uniformly distributed modulo ${\frak p}^m$ if and only if it has distinct elements modulo ${\frak p}^m$, but then it is also uniformly distributed modulo all higher powers of $\frak p$. By construction, we know that $\{a_1,\ldots,a_{n+1}\}$ is almost uniformly distributed modulo powers of $\frak p$ if $N(\frak p)\leq L$. 
Adding multiples of $L!$ to the elements of this set cannot change that because $L!$ is divisible by ${\frak p}^m$. Indeed, $(N \frak{p})^{m-1}<n+1$,
$N(\frak p)\leq L$ and $L>2(n+1)$, so $L!$ is divisible by $(N \frak{p})^m$ and hence also by $\frak{p}^m$.
\end{proof}
\subsection{Preliminary results}
Throughout this and the following sections, $\mu^{*n}$ will denote the $n$-th convolution of a measure $\mu$. We will write $\widehat G$ for the group of characters of $G$. We shall often treat  probability measures on discrete sets as functions and whenever we write $\mu(x)$ we mean $\mu(\{x\})$. Let us recall the definition of the Fourier transform on finite abelian groups.
\begin{definition} Let $\mu$ be a probability measure on a finite abelian group $G$. The \textbf{Fourier transform} of $\mu$ is defined as the measure $\hat{\mu}$ on $\hat{G}$ given by
$$\hat{\mu}(\chi)=\sum_{g\in G}\overline{\chi(g)}\mu(g)$$ for  $\chi\in\widehat{G}$.
\end{definition}
A classical property of the Fourier transform is that the transform of a convolution is the product of transforms, i.e., $\widehat{\mu*\nu}=\widehat{\mu}\widehat{\nu}$. The inverse Fourier transform is given by the formula
$$\mu(g)=\frac{1}{|G|}\sum_{\chi\in\widehat{G}}\chi(g)\widehat{\mu}(\chi).$$
The following series of lemmas will be used to estimate the probability that a randomly chosen subset of $\mathcal{O}_K$ contains $n+1$ elements which are almost uniformly distributed modulo a certain ideal $I$ with $|\mathcal{O}_K/I|\geq n+1$. We call a family $A=\{A_1,A_2,\ldots,A_k\}$ of subsets of $\{1,2,\ldots, n+m\}$ an $m$-partition if it satisfies the following conditions:
\begin{itemize}
\item  $A_i\neq \emptyset$, $A_i\cap A_j=\emptyset$ if $j\neq i$, 
\item $\bigcup A_i=\{1,\ldots,m+n\}$,
\item $\sum_{i=1}^k(|A_i|-1)=m$ or, equivalently, $k=n$.
\end{itemize}
The following lemma allows us to estimate the probability that a collection of $n+m$ independent random variables forms a set of cardinality at most $n$.
\begin{lemma}\label{l.ProbEst1}
Let $X$ be a finite set, $n,m$ natural numbers and $\xi=(\xi_1,\xi_2,\ldots,\xi_{n+m})$ independent random variables with values in $X$ with  probability distributions respectively $\mu_1,\mu_2,\ldots,\mu_{n+m}$. Let $P=P_{\xi,m,n}$ denote the probability that the set $\{\xi_1,\xi_2,\ldots,\xi_{n+m}\}$ contains at most $n$ distinct elements. Then $$P\leq \sum_{A {\rm\ \ m-partition}}\prod_{i=1}^k\left(\sum_{x\in X}\frac{1}{|A_i|}\sum_{j\in A_i}\mu_i(x)^{|A_i|}\right).$$
\end{lemma}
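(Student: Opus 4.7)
The plan is a union bound over $m$-partitions. For each $m$-partition $A=\{A_1,\dots,A_k\}$ (so $k=n$, because $\sum_i(|A_i|-1)=m$ and $\sum_i|A_i|=n+m$), let $E_A$ be the event that $\xi_j$ is constant on each block $A_i$. The first step is to show that the event $\{|\{\xi_1,\dots,\xi_{n+m}\}|\leq n\}$ is contained in $\bigcup_A E_A$ as $A$ ranges over all $m$-partitions. Indeed, if the $\xi_i$ take at most $n$ distinct values, the partition of $\{1,\dots,n+m\}$ into level sets $\{i:\xi_i=x\}$ has at most $n$ nonempty blocks; refining this partition by splitting blocks until we have exactly $n$ of them yields an $m$-partition on which $\xi$ is still constant per block, so at least one $E_A$ occurs.

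By the union bound,
\[
P \;\leq\; \sum_{A\text{ $m$-partition}} P(E_A).
\]
By independence of the $\xi_i$'s the event $E_A$ factorises across blocks, and on each block $A_i$ the condition ``all $\xi_j$ for $j\in A_i$ equal a common value $x$'' has probability $\prod_{j\in A_i}\mu_j(x)$. Summing over the common value $x\in X$ and multiplying over blocks gives
\[
P(E_A) \;=\; \prod_{i=1}^{k}\sum_{x\in X}\prod_{j\in A_i}\mu_j(x).
\]

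The last step is to apply the AM--GM inequality to the $|A_i|$ nonnegative reals $\mu_j(x)^{|A_i|}$ for $j\in A_i$: since the geometric mean of these numbers is exactly $\prod_{j\in A_i}\mu_j(x)$,
\[
\prod_{j\in A_i}\mu_j(x) \;\leq\; \frac{1}{|A_i|}\sum_{j\in A_i}\mu_j(x)^{|A_i|}.
\]
Summing over $x\in X$ and substituting into the factorised expression for $P(E_A)$ yields exactly the bound in the statement.

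I do not anticipate any serious obstacle here. The only point requiring a brief justification is the refinement argument in Step 1 (one must produce a partition with \emph{exactly} $n$ blocks rather than fewer), but this is immediate because splitting a block on which $\xi$ is constant preserves constancy. The union bound is generally lossy, as is the AM--GM step when the $\mu_j$'s are very unequal, so the estimate is not tight; it is however of the right shape to be combined with Fourier-analytic estimates for the random walks introduced earlier, which is presumably how the authors intend to use it in the sequel.
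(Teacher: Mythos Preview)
Your proof is correct and follows essentially the same route as the paper's: union bound over $m$-partitions, factorisation by independence, then AM--GM on each block. You are in fact slightly more careful than the paper about the refinement step (passing from at most $n$ level sets to exactly $n$ blocks), and you correctly obtain $\mu_j(x)^{|A_i|}$ inside the inner sum, matching the paper's proof; the $\mu_i$ in the displayed statement is evidently a typo for $\mu_j$.
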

\begin{proof}
For any $m$-partition $A=\{A_1,A_2,\ldots,A_k\}$ of $\{1,2,\ldots, n+m\}$, let $R_A$ denote the event that the random function $i\mapsto\xi_i$ is constant on each $A_i$. The variables $\xi_i$ are independent, so we have 
$$\mathbb{P}[R_A]=\prod_{i=1}^k\left(\sum_{x\in X}\prod_{j\in A_i}\mu_j(x)\right).$$
The random set $\{\xi_1,\xi_2,\ldots,\xi_{n+m}\}$ has no more than $n$ distinct elements if and only if there exists an $m$-partition $A$ such that the event $R_A$ happened. That gives us the following inequality:
$$P\leq\sum_{A {\rm\ \ m-partition}}\mathbb{P}[R_A]=\sum_{A {\rm\ \ m-partition}}\prod_{i=1}^k\left(\sum_{x\in X}\prod_{j\in A_i}\mu_j(x)\right).$$
Using the inequality between arithmetic and geometric means, we get $$\prod_{j\in A_i}\mu_j(x)\leq \frac{1}{|A_i|}\sum_{j\in A_i}\mu_j(x)^{|A_i|}.$$ Hence
$$P \leq \sum_{A {\rm\ \ m-partition}}\prod_{i=1}^k\left(\sum_{x\in X}\frac{1}{|A_i|}\sum_{j\in A_i}\mu_j(x)^{|A_i|}\right).$$\qedhere
\end{proof}
\begin{proposition}\label{p.ProbEst2}
Let $G$ be a finite group, $n,m$ positive integers and  let $\xi=(\xi_1,\xi_2, \ldots,\xi_{n+m})$ be independent random elements with distributions respectively $\mu_1,\mu_2,\ldots,\mu_{n+m}$. Let $\mu$ be a probability measure on $G$. We assume that all $\mu_i$ are some translates of $\mu$, i.e., we have $\mu_i(x)=\mu(s_ix)$ for $i=1,\ldots, n+m$ and some fixed elements $s_i\in G$. Let $P=P_{\xi,m,n}$ denote the probability that the set $\{\xi_1,\xi_2,\ldots,\xi_{n+m}\}$ contains at most $n$ distinct elements.
 Then 
$$P\ll\left(\frac{\sum_{\chi\in\widehat{G}}|\widehat{\mu}(\chi)|}{|G|}\right)^m$$
with the implicit constant depending only on $n$ and $m$, but not 
 on $G$ or $\mu_i$.
\end{proposition}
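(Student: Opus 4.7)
The plan is to apply Lemma \ref{l.ProbEst1} and then to bound the resulting combinatorial sum via Fourier analysis on $G$. Since each $\mu_i$ is a translate of $\mu$ and summation over $G$ is translation-invariant, the quantity $\sum_{x\in G}\mu_i(x)^r$ equals $M_r:=\sum_{x\in G}\mu(x)^r$ independently of $i$. Substituting into the conclusion of Lemma \ref{l.ProbEst1} gives
\[
P\ \leq\ \sum_{A\ \text{$m$-partition}}\ \prod_{i=1}^n M_{|A_i|}.
\]
The number of $m$-partitions of $\{1,\ldots,n+m\}$ into $n$ nonempty blocks is a Stirling number $N_{n,m}$ depending only on $n$ and $m$, so it suffices to show $\prod_{i=1}^n M_{|A_i|}\leq T^m$ for every $m$-partition $A=\{A_1,\ldots,A_n\}$, where $T:=|G|^{-1}\sum_{\chi\in\widehat G}|\widehat\mu(\chi)|$.

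The crux is the pointwise inequality $M_r\leq T^{r-1}$ for every $r\geq 1$. For $r=1$ this is just $M_1=1$, so assume $r\geq 2$. The plan is to apply the Fourier inversion formula $\mu(x)=|G|^{-1}\sum_\chi\chi(x)\widehat\mu(\chi)$ to each of the $r$ factors of $\mu(x)^r$, sum over $x\in G$, and use the character orthogonality relation $\sum_{x\in G}\chi_1(x)\cdots\chi_r(x)=|G|\cdot\mathbf{1}_{\chi_1\cdots\chi_r=1}$. This yields the clean identity
\[
M_r\ =\ \frac{1}{|G|^{r-1}}\sum_{\chi_1\cdots\chi_r=1}\widehat\mu(\chi_1)\cdots\widehat\mu(\chi_r).
\]
Taking absolute values, letting $\chi_1,\ldots,\chi_{r-1}$ range freely (with $\chi_r$ determined by the constraint), and using the trivial bound $|\widehat\mu(\chi_r)|\leq\sum_{x\in G}\mu(x)=1$ on the remaining factor, one obtains $|M_r|\leq T^{r-1}$.

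To assemble the pieces, an $m$-partition satisfies $\sum_{i=1}^n(|A_i|-1)=n+m-n=m$, so the pointwise bound yields $\prod_i M_{|A_i|}\leq\prod_i T^{|A_i|-1}=T^m$. Summing over the $N_{n,m}$ partitions produces the desired inequality $P\leq N_{n,m}T^m$. The only step that carries any real content is the Fourier-theoretic identity for $M_r$; everything else is bookkeeping, and no obstacle is anticipated beyond this routine computation. One caveat to flag explicitly is that the character-theoretic Fourier inversion used above requires $G$ to be abelian, consistent with the convention implicit in the preceding definition of $\widehat G$ and with the intended application to $G=\mathcal O_K/I$.
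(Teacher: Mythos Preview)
Your proof is correct. Both you and the paper start from Lemma~\ref{l.ProbEst1} and reduce the problem to the single-moment bound $M_r:=\sum_{g\in G}\mu(g)^r\leq T^{r-1}$ with $T=|G|^{-1}\sum_{\chi}|\widehat{\mu}(\chi)|$, after which the $m$-partition bookkeeping is identical. The difference lies in how this moment bound is obtained: the paper proves it by invoking a Hausdorff--Young inequality (stated separately as Lemma~\ref{l.HYineq}) with exponents $q=r$, $p=r/(r-1)$, followed by the trivial estimate $|\widehat{\mu}(\chi)|^{p}\leq|\widehat{\mu}(\chi)|$; you instead expand $\mu(x)^r$ via Fourier inversion, sum over $x$, and use character orthogonality to collapse the $r$-fold sum onto the constraint $\chi_1\cdots\chi_r=1$, then bound one factor trivially. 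Your route is more elementary in that it bypasses the need for Lemma~\ref{l.HYineq} entirely, relying only on inversion and orthogonality; the paper's route, on the other hand, makes the connection to classical harmonic-analytic inequalities explicit. Your observation that translation invariance gives $\sum_x\mu_i(x)^r=M_r$ independently of $i$ also streamlines the argument slightly compared to the paper, which keeps the $\mu_j$ until after Hausdorff--Young is applied. Your caveat about $G$ needing to be abelian applies equally to the paper's argument.
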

For the proof we shall need the following variation of the Haussdorf-Young inequality. 
\begin{lemma}\label{l.HYineq}Let $G$ be a finite abelian group, let $p,q$ be real numbers such that $\frac{1}{p}+\frac{1}{q}=1$ and $1< p\leq 2$. 
 Then for any $f:G\to\C$ we have
$$\sum_{g\in G}|f(g)|^q\leq\left(\frac{1}{|G|}\sum_{\chi\in\widehat{G}}|\widehat{f}(\chi)|^p\right)^{q-1}.$$
\end{lemma}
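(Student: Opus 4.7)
The inequality is (equivalent to) the classical Hausdorff--Young inequality on the finite abelian group $G$, stated for the inverse Fourier transform. Indeed, since $1/p+1/q=1$ we have $(q-1)/q=1/p$, so raising both sides to the $1/q$-th power rewrites the desired estimate as
\[
\Bigl(\sum_{g\in G}|f(g)|^q\Bigr)^{1/q} \;\leq\; \Bigl(\frac{1}{|G|}\sum_{\chi\in\widehat{G}}|\widehat{f}(\chi)|^p\Bigr)^{1/p}.
\]
Equivalently, if one normalises $\widehat{G}$ by $1/|G|$ and leaves $G$ with counting measure, this is $\|f\|_{L^q(G)}\leq\|\widehat{f}\|_{L^p(\widehat{G})}$, i.e.\ the boundedness from $L^p(\widehat{G})$ to $L^q(G)$ of the inverse Fourier transform
\[
(T\varphi)(g) \;=\; \frac{1}{|G|}\sum_{\chi\in\widehat{G}}\chi(g)\varphi(\chi).
\]

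The plan is to obtain the bound by Riesz--Thorin interpolation between the two endpoint cases $p=1$ and $p=2$. For $p=2$ we have $q=2$ and Plancherel's formula on $G$ (which follows immediately from orthogonality of characters) gives $\sum_g|f(g)|^2=\tfrac{1}{|G|}\sum_\chi|\widehat{f}(\chi)|^2$, so the norm of $T\colon L^2(\widehat{G})\to L^2(G)$ is exactly $1$. For $p=1$, $q=\infty$, the inverse Fourier formula $f(g)=\tfrac{1}{|G|}\sum_{\chi}\chi(g)\widehat{f}(\chi)$ combined with $|\chi(g)|=1$ yields $|f(g)|\leq\tfrac{1}{|G|}\sum_{\chi}|\widehat{f}(\chi)|$, so $T\colon L^1(\widehat{G})\to L^\infty(G)$ also has norm at most $1$.

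Applying the Riesz--Thorin interpolation theorem to the linear operator $T$ between the two endpoint pairs $(L^1,L^\infty)$ and $(L^2,L^2)$, the interpolated operator $T\colon L^p(\widehat{G})\to L^q(G)$ has norm at most $1$ for every $1\leq p\leq 2$ with $q=p/(p-1)$, in particular for the range $1<p\leq 2$ in the statement. Setting $\varphi=\widehat{f}$ (so that $T\varphi=f$ by Fourier inversion) gives exactly the inequality.

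I expect no real obstacle here: the result is essentially a textbook fact, and the only subtlety is the bookkeeping of the normalisation factor $1/|G|$, which is dictated by the convention for $\widehat{\mu}$ fixed earlier in the paper; both endpoint estimates are elementary, and complex interpolation delivers the intermediate exponents automatically. Alternatively, if one wants to avoid invoking Riesz--Thorin, one may prove the bound by a direct complex-analytic interpolation argument à la Thorin on the strip $0\leq\operatorname{Re}(z)\leq 1$, comparing $f$ with $|\widehat{f}|^{z}\operatorname{sgn}\widehat{f}$, but this is just the proof of Riesz--Thorin in disguise.
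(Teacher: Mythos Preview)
Your proof is correct and follows essentially the same route as the paper: both rewrite the inequality, after taking $q$-th roots, as $\|f\|_{\ell^q(G)}\leq\|\widehat{f}\|_{L^p(\widehat{G})}$ with normalised counting measure on $\widehat{G}$, and recognise this as Hausdorff--Young. The only cosmetic difference is that the paper uses Pontryagin duality (observing that $\overline{f}$ is the Fourier transform of $\overline{\widehat{f}}$ on $\widehat{G}$) to reduce to the standard forward Hausdorff--Young inequality and then cites a reference, whereas you prove the bound for the inverse transform directly via Riesz--Thorin interpolation between the trivial $L^1\!\to\! L^\infty$ estimate and Plancherel; since Riesz--Thorin is precisely how Hausdorff--Young is established, the two arguments are the same in substance.
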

\begin{proof}
By taking the $q$-th root of both sides, we see that the inequality is equivalent to  the inequality
$$\|f\|_{l^q(G)}\leq\|\widehat{f}\|_{L^p(\widehat{G})}$$
where $\widehat{G}$ is endowed with a normalized counting measure. The group $G$ is canonically isomorphic to the dual group of $\widehat{G}$ and the function $\overline{f}$ is equal to the Fourier transform of $\overline{\widehat{f}}$. Thus, the equality may be rewritten as 
$$\|\widehat{\overline{\widehat{f}}}\|_{l^q(\widehat{\widehat{G}})}\leq\|\overline{\widehat{f}}\|_{L^p(\widehat{G})}$$ which is the Hausdorff-Young inequality (\cite[ Chapter 9.5]{HLP52}) applied to the group $\widehat{G}$ and the function $\overline{\widehat{f}}$.
\end{proof}

\begin{proof}[Proof of Proposition \ref{p.ProbEst2}]
By Lemma \ref{l.ProbEst1} we have 
$$P\leq  \sum_{A {\rm\ \ m-partition}}\prod_{i=1}^k\left(\sum_{g\in G}\frac{1}{|A_i|}\sum_{j\in A_i}\mu_j(g)^{|A_i|}\right).$$
Using Lemma \ref{l.HYineq} with $q=|A_i|,p=|A_i|/(|A_i|-1)$ for any $A_i$ with $|A_i|\geq 2$, we get
\begin{equation}\label{e.Eq1}
\sum_{g\in G}\frac{1}{|A_i|}\sum_{j\in A_i}\mu_j(g)^{|A_i|}\leq\frac{1}{|A_i|}\sum_{j\in A_i}\left(\frac{1}{|G|}\sum_{\chi\in\widehat{G}}|\widehat{\mu_j}(\chi)|^{|A_i|/(|A_i|-1)}\right)^{|A_i|-1}.
\end{equation}
Since $\mu_j$ is a probability measure, we always have $|\widehat{\mu_j}(\chi)|\leq 1$, so 
$$|\widehat{\mu_j}(\chi)|^{|A_i|/(|A_i|-1)}\leq |\widehat{\mu_j}(\chi)|.$$ Moreover, since $\mu_j(x)=\mu(s_jx)$, we have $\widehat{\mu_j}(\chi)=\chi(s_j)\widehat{\mu}(\chi)$, so $|\widehat{\mu_j}(\chi)|=|\widehat{\mu}(\chi)|$ for all $j$. We conclude from these remarks and inequality (\ref{e.Eq1}) that
$$\sum_{g\in G}\frac{1}{|A_i|}\sum_{j\in A_i}\mu_j(g)^{|A_i|}\leq \left(\frac{1}{|G|}\sum_{\chi\in\widehat{G}}|\widehat{\mu}(\chi)|\right)^{|A_i|-1}.$$ Note that this holds even if $|A_i|=1$ because then both sides are equal to $1$. Thus, 
$$P\leq  \sum_{A {\rm\ \ m-partition}}\prod_{i=1}^k\left(\frac{1}{|G|}\sum_{\chi\in\widehat{G}}|\widehat{\mu}(\chi)|\right)^{|A_i|-1}.$$ For any $m$-partition we have $\sum_{i=1}^k(|A_i|-1)=m$, so 
$$P\ll \left(\frac{\sum_{\chi\in\widehat{G}}|\widehat{\mu}(\chi)|}{|G|}\right)^m$$ where the implicit constant is the number of all $m$-partitions of $\{1,\ldots,n+m\}$.
\end{proof}
We conclude this subsection with a proposition that will be used to evaluate the probability that a randomly chosen set is not $n$-universal. Recall that $\mathcal{O}_K$ is the ring of integers in a finite extension field $K$ of $\Q$ of degree $d\geq 2$. Let us fix $d$ elements $\omega_1,\ldots,\omega_d \in \mathcal{O}_K$ such that $\mathcal{O}_K=\Z\omega_1\oplus\ldots\oplus\Z\omega_d\simeq \Z^d$. We define a norm $\|\cdot\|$  on $\mathcal{O}_K$ by letting $\|a\|=\max |a_i|$ for $a=a_1\omega_1+\ldots+a_d\omega_d$. Recall that $N(I)=|\mathcal{O}_K/I|$ is the norm of an ideal $I$, and that the function $\mathcal{O}_K \to \mathbf{N}$, $a\mapsto N(a\mathcal{O}_K)=|N_{K/\Q}(a)|$ grows as $O(\|a\|^d).$\footnote{Indeed, the norm $N_{K/\Q}(a)$ is the determinant of the map $K\to K, x\mapsto ax$. The entries of the matrix representing the multiplication by $a$ are linear in $a_1,\ldots,a_d$, thus $N_{K/\Q}(a)$ is a homogeneous polynomial of degree $d$ in variables $a_1,\ldots,a_d$.}
\begin{proposition}\label{p.ProbCrit}
Let $\mu$ be a probability measure on $\mathcal{O}_K$, $L$ an integer and let $S=\{\xi_1,\ldots,\xi_{n+m}\}$, where $\xi_i$ are independent random variables defined as in the outline. For any non zero prime ideal $\frak p\in\Spec \, \mathcal{O}_K$, let $\mu_{\frak p}$ denote the probability measure on $\mathcal{O}_K/\frak p$ obtained as the projection of $\mu$ and let $P=P_{S,n}$ be the probability that $S$ is not $n$-universal. Then there exist  constants $c>0$ and $\kappa_0>0$ (depending on the field $K$ and the numbers $L$, $n$, and $m$) and a constant $C_0$ depending only on $n$ and $m$ such that for any natural number $\kappa>\kappa_0$ we have
$$P\leq C_0\left(\sum_{L<N(\frak p)<c\kappa^d}\left(\frac{\sum_{\chi\in\widehat{\mathcal{O}_K/\frak p}}|\widehat{\mu_{\frak p}}(\chi)|}{N(\frak p)}\right)^m\right) +(n+m)\mu(\{a\in \mathcal{O}_K|\|a\|>\kappa\}).$$
\end{proposition}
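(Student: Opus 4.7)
First, I would use Lemma~\ref{l.Expliq} and Corollary~\ref{p.n-univCondition} to reduce the failure of $n$-universality to the existence of a prime $\frak p$ with $N(\frak p)>L$ for which $|\pi_\frak p(S)|\leq n$; here the inequality $N(\frak p)>L>2(n+1)$ ensures that the almost-uniform-distribution condition on an $(n+1)$-element subset collapses to pairwise distinctness modulo $\frak p$, so only this much simpler condition needs to be tested.

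Next, I would truncate the randomness by introducing the event $B=\{\max_i\|\psi_i\|\leq\kappa\}$, whose complement has probability at most $(n+m)\mu(\{a\in\mathcal{O}_K\mid\|a\|>\kappa\})$ by the union bound, accounting for the second term in the conclusion. Conditional on $B$, the triangle inequality applied to $\xi_i=a_i+L!\psi_i$ together with the polynomial growth $|N_{K/\Q}(\cdot)|\ll\|\cdot\|^d$ yields $|N_{K/\Q}(\xi_i-\xi_j)|\leq c\kappa^d$ for a suitable constant $c$ and for $\kappa>\kappa_0$. Since by construction one can select $n+1$ indices $i_1,\ldots,i_{n+1}$ for which the $a_{i_j}$ are pairwise distinct modulo $L!$ (so that the $\xi_{i_j}$ are themselves distinct in $\mathcal{O}_K$), any prime $\frak p$ with $N(\frak p)>c\kappa^d$ is automatically forced to separate these $n+1$ elements. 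Thus on $B$ a failure can occur only via a prime in the range $L<N(\frak p)\leq c\kappa^d$.

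The union bound over this finite collection of primes reduces the problem to estimating $\Pb{|\pi_\frak p(S)|\leq n}$ for each $\frak p$ in the above range. For this I would apply Proposition~\ref{p.ProbEst2} in the finite abelian group $G=\mathcal{O}_K/\frak p$: the projections $\pi_\frak p(\xi_i)$ are independent translates (by $\pi_\frak p(a_i)$) of the common distribution $\nu=(L!\cdot)_\ast\mu_\frak p$, and the proposition yields a bound of the form $C_0\bigl(\sum_{\chi\in\widehat G}|\widehat\nu(\chi)|/N(\frak p)\bigr)^m$ with $C_0$ depending only on $n$ and $m$. Using the identity $\widehat\nu(\chi)=\widehat{\mu_\frak p}(\chi\circ(L!\cdot))$ then converts this to the stated expression in terms of $\widehat{\mu_\frak p}$, and summing over the allowed range of $\frak p$ completes the proof.

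The most delicate step is the final conversion from $\widehat\nu$ to $\widehat{\mu_\frak p}$. Whenever the rational prime below $\frak p$ exceeds $L$, $L!$ acts invertibly on $\mathcal{O}_K/\frak p$, so the map $\chi\mapsto\chi\circ(L!\cdot)$ is a bijection of $\widehat G$ and $\sum_\chi|\widehat\nu(\chi)|=\sum_\chi|\widehat{\mu_\frak p}(\chi)|$ exactly; this is the generic regime. For the finitely many $\frak p$ lying over rational primes $p\leq L$ with $N(\frak p)>L$ one has $L!\equiv 0$ in $\mathcal{O}_K/\frak p$, so $\pi_\frak p(\xi_i)=\pi_\frak p(a_i)$ is deterministic and the naive Fourier bound becomes trivial. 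These primes must therefore be handled by a separate, deterministic argument, exploiting a mild strengthening of the construction of the $a_i$ that guarantees $|\pi_\frak p(\{a_i\})|\geq n+1$ for each such $\frak p$, thereby forcing the corresponding summand to vanish.
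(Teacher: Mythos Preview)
Your approach is essentially the paper's: use Lemma~\ref{l.Expliq} to dispose of primes with $N(\frak p)\leq L$, truncate via a ball event (you bound $\|\psi_i\|$ directly, while the paper bounds $\|\xi_i\|$ and then rescales by $L!$, which amounts to the same thing after absorbing constants), take a union bound over primes in the resulting finite range $L<N(\frak p)<c\kappa^d$, and invoke Proposition~\ref{p.ProbEst2} for each such prime.

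Your final paragraph is in fact more careful than the paper's own argument. The paper applies Proposition~\ref{p.ProbEst2} and writes the resulting bound directly in terms of $\widehat{\mu_{\frak p}}$ without comment, whereas the distributions of the $\pi_{\frak p}(\xi_i)$ are translates of $(L!\cdot)_*\mu_{\frak p}$ rather than of $\mu_{\frak p}$ itself. As you note, the identification $\sum_\chi|\widehat{(L!\cdot)_*\mu_{\frak p}}(\chi)|=\sum_\chi|\widehat{\mu_{\frak p}}(\chi)|$ goes through the bijection $\chi\mapsto\chi\circ(L!\cdot)$ on $\widehat{\mathcal{O}_K/\frak p}$, which holds exactly when the residue characteristic exceeds $L$; for the finitely many $\frak p$ with residue characteristic $p\leq L$ but $N(\frak p)>L$ the pushforward degenerates to a point mass and the Fourier bound is vacuous. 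Your proposed fix, namely strengthening the choice of the $a_i$ so that they already have $n+1$ distinct residues modulo each of these finitely many exceptional primes (so that $P_{\frak p}=0$ deterministically there), is the right patch and costs nothing, and it closes a small gap that the paper leaves implicit.
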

\begin{proof}
By Proposition \ref{p.CritNormal}, the set $S$ is $n$-universal if and only if for every prime ideal $\frak p\in\Spec \, \mathcal{O}_K$ we can find a subset of $S$ with $n+1$ elements which is almost uniformly distributed modulo powers of $\frak p$. By construction, $S$ contains such a subset for every prime ideal $\frak p$ with $N(\frak p)\leq L$. Thus, we only need to verify that we can find such a subset for prime ideals $\frak p$ whose norm is bigger than $L$. For such an ideal $\mathfrak{p}$, the set $S$ contains an $(n+1)$-element subset that is  almost uniformly distributed modulo powers of $\frak p$ if and 
 only if $\pi_{\frak p}(S)$ has $n+1$ elements. Let $P_{\frak p}$ denote the probability that $\pi_{\frak p}(S)< n+1$. The probability of a union of events is smaller or equal to the sum of their probabilities, so we have the following inequality
$$P\leq \sum_{L<N(\frak p)}P_{\frak p}.$$
This is not enough to get any useful bound because the probabilities $P_{\frak p}$ decrease too slowly. We have to take care of the prime ideals with big norms separately. If we knew that for all elements $s_1\neq s_2$ of $S$ we have $N((s_1-s_2))<\kappa$, then $S$ itself would be automatically almost uniformly distributed modulo powers of every prime ideal $\frak p$ with $N(\frak p)\geq\kappa$. Otherwise, we would have that for some $s_1,s_2\in S$, $(s_1-s_2)\in\frak p$, but then $(s_1-s_2)\mathcal{O}_K\subset \frak p$, and so $N((s_1-s_2))\geq N(\frak p)$. Consequently, if $|S|\geq n+1$, then $S$ would contain $n+1$ elements which are almost uniformly distributed modulo powers of such ideals. Let $E_R$ denote the event that $S$ is contained in $B_{\|\cdot\|}(R)$, the ball of radius $R$ around the origin. Since $N(a)=O(\|a\|^d)$, we can find a constant $c_0$ such that $$\sup_{a_1,a_2\in B_{\|\cdot\|}(R)}|N((a_1-a_2))|\leq \sup_{a\in B_{\|\cdot\|}(2R)}|N(a)|\leq c_0R^d.$$ 
By the previous remark, if $|S|\geq n+1$ and the event $E_R$ happened, then $S$ contains an $(n+1)$-element subset which is almost uniformly distributed modulo every prime ideal with $N(\frak p)>c_0R^d$. Recall that by the construction, the set $S$ must have at least $n+1$ distinct elements modulo $L!$, so clearly $|S|\geq n+1$. We get the following estimate
$$P\leq \sum _{L<N(\frak p)<c_0R^d}P_{\frak p}+(1-\Pb{E_R}).$$
The last step is to relate $(1-\Pb{E_R})$ to $\mu(\{a\in \mathcal{O}_K|\|a\|>\kappa\})$. Obviously $(1-\Pb{E_R})\leq\sum_{i=1}^{n+m}\Pb{\|\xi_i\|>R}$. Recall that $\xi_i=a_i+L!\psi_i$, where $\psi_i$ has  distribution $\mu$, so $$\Pb{\|\xi_i\|>R}\leq\Pb{\|\psi_i\|>\frac{R-\|a_i\|}{L!}}.$$ The set $\{a_1,\ldots,a_{n+m}\}$ has to be chosen so that the first $n+1$ elements are almost uniformly distributed modulo powers of $\frak p$ for every prime ideal $\frak p$ of norm not exceeding $L$. We also want it to have $n+1$ distinct elements modulo $L!$. Such a set may  always be found in the ball of radius $c_1=c_1(L)$. Hence there exists a constant $c_2=c_2(L)$ such that $\Pb{\|\xi_i\|>R}\leq \Pb{\|\psi_i\|>R/{c_2}}=\mu(\{a\in \mathcal{O}_K|\|a\|>R/{c_2}\})$ for large $R$. Substituting $\kappa=R/{c_2}$ and $c=c_0c_2^d$, we get 
$$P\leq \sum _{L<N(\frak p)<c\kappa^d}P_{\frak p}+(n+m)\mu(\{a\in \mathcal{O}_K|\|a\|>\kappa\}).$$
By Proposition \ref{p.ProbEst2}, 
$$P_{\frak p}\leq C_0\left(\frac{\sum_{\chi\in\widehat{\mathcal{O}_K/\frak p}}|\widehat{\mu_\frak p}(\chi)|}{|\mathcal{O}_K/\frak p|}\right)^m$$ 
for some constant $C_0$ depending only on $n$ and $m$, so finally we get
$$P\leq C_0\sum _{L<N(\frak p)<c\kappa^d}\left(\frac{\sum_{\chi\in\widehat{\mathcal{O}_K/\frak p}}|\widehat{\mu_\frak p}(\chi)|}{|\mathcal{O}_K/\frak p|}\right)^m+(n+m)\mu(\{a\in \mathcal{O}_K|\|a\|>\kappa\}).\qedhere$$\end{proof}
\subsection{Construction of the measure $\mu$}
Let $\mathcal{O}_K= \Z\omega_1\oplus\ldots\oplus\Z\omega_d\simeq \Z^d$ for some $\omega_1,\ldots,\omega_d$. Let $M$ be a positive integer (we will specify later how big it should be). Let $\nu_i=\frac{1}{2}(\delta_{\omega_i}+\delta_{-\omega_i})$ where $\delta_{\omega}$ denotes the Dirac measure centered at $\omega$ and set $\nu=\nu_1 *\cdots * \nu_d$. Consider a random walk $X^{(i)}$ on $\mathcal{O}_K$ defined by: \begin{itemize}
\item $X^{(0)}=0$,
\item $\Pb{X^{(i+1)}=a\mid X^{(i)}=b}=\nu(a-b).$
\end{itemize}
Let $\mu$ be equal to the distribution of $X^{(M)}$, i.e., $\mu=\mu_M=\nu^{*M}$. 
We construct the random set $S$ as it was described in the outline. We fix an integer $L>\max(n+d,2n+2)$. We choose elements $a_1,a_2,\ldots,a_{n+d}$ such that at least $n+1$ of them are distinct modulo $L!$ and  such that $a_1,a_2,\ldots,a_{n+1}$ are almost uniformly distributed modulo powers of every prime ideal $\frak p$ with $N(\frak p)\leq L$. Next, we take $n+d$ independent random elements   $\psi_1,\psi_2,\ldots,\psi_{n+d}$ of $\mathcal{O}_K$ with distributions equal to $\mu$ and set $S=\{\xi_1,\ldots,\xi_{n+d}\}$ with $\xi_i=a_i+L!\psi_i$. As before, $\mu_{\frak p}$ shall denote the projection of the measure $\mu$ onto $\mathcal{O}_K/\mathfrak{p}$.
\begin{lemma}\label{l.ConvEstimate}
For any prime ideal $\frak p$ in $\mathcal{O}_K$ and  an integer $M$, we have
$$\frac{\sum_{\chi\in\widehat{\mathcal{O}_K/\frak p}}|\widehat{\mu_{\frak p}}(\chi)|}{N(\frak{p})}\leq C_1\left(\frac{1}{N(\frak p)}+M^{-1/2}\right),$$
with a constant $C_1$ depending only on $d$.
\end{lemma}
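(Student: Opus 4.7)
The plan is to combine an explicit Fourier-transform computation with the fact that $G := \mathcal{O}_K/\mathfrak{p}$ is a finite field. First I would compute $\widehat{\mu_{\mathfrak{p}}}(\chi)$ directly: since $\mu = \nu^{*M}$ and $\nu = \nu_1 * \cdots * \nu_d$, writing $\chi(\pi(\omega_i)) = e^{2\pi i \theta_i(\chi)}$ and using the formula for the Fourier transform of the symmetric two-point measure $\nu_i$ gives $\widehat{(\nu_i)_{\mathfrak{p}}}(\chi) = \cos(2\pi\theta_i(\chi))$, and therefore
\[
|\widehat{\mu_{\mathfrak{p}}}(\chi)| = \prod_{i=1}^{d}|\cos(2\pi\theta_i(\chi))|^{M}.
\]

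Second, I would exploit the algebraic structure of $G$. Since $\mathfrak{p}$ is a nonzero prime (hence maximal) ideal of $\mathcal{O}_K$, $G$ is a finite field of order $p^{f} = N(\mathfrak{p})$, where $p$ is its characteristic; in particular, $G$ is an $\mathbf{F}_p$-vector space of dimension $f \leq d$. Because $\pi(\omega_1),\ldots,\pi(\omega_d)$ span $G$ over $\mathbf{F}_p$, I can extract a subset $J \subseteq \{1,\ldots,d\}$ of size $f$ whose images form a basis. Each factor satisfies $|\cos(2\pi\theta_i(\chi))|^M \leq 1$, so dropping the factors with $i \notin J$ gives
\[
\sum_{\chi \in \widehat{G}}|\widehat{\mu_{\mathfrak{p}}}(\chi)| \leq \sum_{\chi \in \widehat{G}}\prod_{j \in J}|\cos(2\pi\theta_j(\chi))|^{M}.
\]
The crucial observation is that $\chi \mapsto (\chi(\pi(\omega_j)))_{j \in J}$ is a bijection from $\widehat{G}$ onto the product of $f$ copies of the group of $p$-th roots of unity (a character on a vector space is freely determined by its values on a basis), so the right-hand side factors as the $f$-fold power of the one-variable sum $\Sigma_{p,M} := \sum_{k=0}^{p-1}|\cos(2\pi k/p)|^{M}$.

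Third, I would bound $\Sigma_{p,M} \leq C_{2}(1 + p/\sqrt{M})$ for an absolute constant $C_2$, using the elementary inequality $|\cos(2\pi x)| \leq \exp(-c\, d(x)^{2})$ in which $d(x)$ denotes the distance from $x$ to $\tfrac{1}{2}\mathbf{Z}$ (valid e.g.\ with $c = 2\pi^2$), splitting the range $k \in \{0,\ldots,p-1\}$ according to which element of $\tfrac{1}{2}\mathbf{Z}$ is closest to $k/p$, and comparing each block with the Gaussian integral $\int_{\mathbf{R}} e^{-cMt^{2}/p^{2}}\,dt = p\sqrt{\pi/(cM)}$. Combining with the previous step yields
\[
\frac{1}{N(\mathfrak{p})}\sum_{\chi \in \widehat{G}}|\widehat{\mu_{\mathfrak{p}}}(\chi)| \leq \frac{C_{2}^{f}}{p^{f}}\Bigl(1 + \frac{p}{\sqrt{M}}\Bigr)^{\!f} = C_{2}^{f}\sum_{k=0}^{f}\binom{f}{k}\frac{1}{p^{f-k}M^{k/2}}.
\]
A routine weighted-means argument shows that $p^{f-k}M^{k/2} \geq \min(p^{f},\sqrt{M})$ for every $0 \leq k \leq f$ (indeed, in the case $p^f \leq \sqrt M$ bound $M^{k/2}$ below by $p^k$; in the opposite case bound $p^{f-k}$ below by $(\sqrt M)^{(f-k)/f}$), so every term is at most $1/N(\mathfrak{p}) + 1/\sqrt{M}$; summing with $\sum_{k} \binom{f}{k} = 2^f$ and $f \leq d$ gives the claim with $C_{1} = (2C_{2})^{d}$.

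The main obstacle is the factorization in the second step. The argument relies crucially on the fact that $G$ has exponent $p$ and admits an $\mathbf{F}_p$-basis drawn from the $\pi(\omega_i)$, which lets the character sum over $\widehat{G}$ decouple into identical one-dimensional sums. Without this finite-field structure one would face a genuine $d$-variable lattice sum over a subgroup of the torus $(\mathbf{R}/\mathbf{Z})^d$, which would require more delicate equidistribution or Poisson-summation arguments.
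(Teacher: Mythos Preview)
Your proof is correct and follows essentially the same route as the paper: both exploit the $\mathbf{F}_p$-vector space structure of $\mathcal{O}_K/\mathfrak{p}$ to extract a basis from the $\pi(\omega_i)$, drop the remaining factors (the paper phrases this as $|\widehat{\mu_{\mathfrak p}}|\le|\widehat{\mu_{1,\mathfrak p}}|$), factor the character sum into a power of the one-dimensional sum $\Sigma_{p,M}=\sum_{k=0}^{p-1}|\cos(2\pi k/p)|^{M}$, and bound the latter by $O(1+p/\sqrt{M})$. The only cosmetic differences are that the paper evaluates $\int_0^1|\cos(2\pi t)|^M\,dt$ via the Gamma function instead of your Gaussian majorant, and closes with the power-mean inequality rather than your binomial expansion; neither changes the structure of the argument.
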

\begin{proof}
The quotient $\mathcal{O}_K/\frak p$ is isomorphic to the finite field $\F_{p^{d'}}$ for some $d'\leq d$. Since $\omega_1,\omega_2,\ldots,\omega_d$ generate $\mathcal{O}_K$ as a $\Z$-module, they also generate $\mathcal{O}_K/\frak p$ over $\F_p$. After a possible change of enumeration,  we may assume that $\mathcal{O}_K/\frak p=\F_p\omega_1\oplus\ldots\oplus\F_p\omega_{d'}$, this establishes an isomorphism $\Psi:\mathcal{O}_K/\frak{p} \simeq \F_{p}^{d'}$. We will need the following claim.

\textbf{Claim:} Let $G=G_1\times\ldots\times G_l$ be a finite abelian group and let $m_1,\ldots,m_l$ be probability measures on $G_1,\ldots,G_l$, respectively. Let $m=m_1\times\ldots\times m_l$ be the product measure on $G$. Then 
$$\sum_{\chi\in\widehat G}|\widehat{m}(\chi)|=\prod_{i=1}^l\left(\sum_{\chi_i\in\widehat{G_i}}|\widehat{m_i}(\chi_i)|\right).$$
\textbf{Proof of the claim:} This follows immediately from the fact that the decomposition $G=\prod_{i=1}^l G_i$ induces an isomorphism $\prod_{i=1}^l\widehat{G_i}\simeq\widehat{G}$ given by $$(\chi_1,\ldots,\chi_l)\mapsto \big((a_1,\ldots,a_l)\mapsto\chi_1(a_1)\ldots\chi_l(a_l)\big).$$

Convolution of measures commutes with the transport of measures by a homomorphism, so $\mu_{\frak p}=\nu_{1,\frak p}^{*M}*\ldots *\nu_{d,\frak p}^{*M}$. Let $\mu_{1, \frak p}=\nu_{1,\frak p}^{*M}*\ldots *\nu_{d',\frak p}^{*M}$ and $\mu_{2, \frak p}=\nu_{d'+1,\frak p}^{*M}*\ldots *\nu_{d,\frak p}^{*M}$. By the properties of the Fourier transform we have $\widehat{\mu_{\frak p}}=\widehat{\mu_{1,\frak p}}\widehat{\mu_{2,\frak p}}$, so $|\widehat{\mu_{\frak p}}|\leq |\widehat{\mu_{1,\frak p}}|$. The last inequality follows from the fact that the Fourier transform of a probability measure is bounded by $1$ in absolute value. We get
$$\frac{\sum_{\chi\in\widehat{\mathcal{O}_K/\frak p}}|\widehat{\mu_{\frak p}}(\chi)|}{N(\frak{p})}\leq\frac{\sum_{\chi\in\widehat{\mathcal{O}_K/\frak p}}|\widehat{\mu_{1,\frak p}}(\chi)|}{N(\frak{p})}.$$
The measure $\mu_{1,\frak p}$ corresponds via the isomorphism $\mathcal{O}_K/\frak p\simeq \F_p^{d'}$ to the measure  $\nu^{*M}\times\ldots\times\nu^{*M}$, where $\nu$ is a measure on $\F_p$ given by $\nu=\frac{1}{2}(\delta_1+\delta_{-1})$. Consequently, by the claim and the previous inequality, we get
\begin{equation}\label{e.MuEst1}
\frac{\sum_{\chi\in\widehat{\mathcal{O}_K/\frak p}}|\widehat{\mu_{\frak p}}(\chi)|}{N(\frak{p})}\leq\left(\frac{1}{p}\sum_{\chi\in\widehat{\F_p}}|\widehat{\nu}(\chi)|^M\right)^{d'}.
\end{equation}
We shall estimate the expression $\frac{1}{p}\sum_{\chi\in\widehat{\F_p}}|\widehat{\nu}(\chi)|^M$. The characters of $\F_p$ are of the form $\chi_a(t)=\exp(2\pi i a t/p)$ for $a=0,1,\ldots,p-1$, so
\begin{align*}\frac{1}{p}\sum_{\chi\in\widehat{\F_p}}|\widehat{\nu}(\chi)|^M &=
\frac{1}{p}\sum_{a=0}^{p-1}|\widehat{\nu}(\chi_a)|^M =\frac{1}{p}\sum_{a=0}^{p-1}\left(\frac{\exp(2\pi i a/p)+\exp(-2\pi i a/p)}{2}\right)^M\\
&=\frac{1}{p}\sum_{a=0}^{p-1}|\cos(2\pi a/p)|^M\leq \frac{2}{p}+\int_0^1|\cos(2\pi t)|^Mdt\\
&=\frac{2}{p}+\frac{2\Gamma({\frac{M+1}{2}})}{\sqrt{\pi}M\Gamma({\frac{M}{2}})}\leq C_1'\left(\frac{1}{p}+M^{-1/2}\right)\\
\end{align*}
for some constant $C_1'$, since $\Gamma({\frac{M+1}{2}})/\sqrt{M}\Gamma({\frac{M}{2}})$ is bounded as $M\rightarrow \infty$. Now using inequality (\ref{e.MuEst1}) and the inequality between power means, we obtain
$$\frac{\sum_{\chi\in\widehat{\mathcal{O}_K/\frak p}}|\widehat{\mu_{\frak p}}(\chi)|}{N(\frak{p})}\leq C_1\left(\frac{1}{p^{d'}}+M^{-d'/2}\right)\leq C_1\left(\frac{1}{N(\frak p)}+M^{-1/2}\right),$$ where $C_1=2^{d'-1}C_1'^{d'}$.
\end{proof}
Applying again the inequality between the means, we immediately obtain the following corollary.
\begin{corollary}\label{c.Est1} We have
$$\left(\frac{\sum_{\chi\in\widehat{\mathcal{O}_K/\frak p}}|\widehat{\mu_{\frak p}}(\chi)|}{N(\frak{p})}\right)^d\leq C_2
\left(\frac{1}{N(\frak p)^d}+M^{-d/2}\right),$$
where $C_2=2^{d-1}C_1^d$ is a constant depending only on $d$.
\end{corollary}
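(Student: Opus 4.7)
The plan is to derive the corollary as a direct consequence of Lemma \ref{l.ConvEstimate} by raising both sides to the $d$-th power and then bounding $(a+b)^d$ by a multiple of $a^d+b^d$ via a standard convexity estimate.

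First I would start from the bound
\[
\frac{\sum_{\chi\in\widehat{\mathcal{O}_K/\frak p}}|\widehat{\mu_{\frak p}}(\chi)|}{N(\frak{p})}\leq C_1\!\left(\frac{1}{N(\frak p)}+M^{-1/2}\right)
\]
supplied by Lemma \ref{l.ConvEstimate}, raise both sides to the $d$-th power, and pull out the factor $C_1^d$. This reduces the task to proving
\[
\left(\frac{1}{N(\frak p)}+M^{-1/2}\right)^d \;\leq\; 2^{d-1}\!\left(\frac{1}{N(\frak p)^d}+M^{-d/2}\right).
\]

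The key inequality I would use is the elementary fact that for any non-negative reals $a,b$ and any $d\geq 1$,
\[
(a+b)^d \leq 2^{d-1}(a^d+b^d),
\]
which is just the convexity of the map $x\mapsto x^d$ on $[0,\infty)$, equivalently the power mean inequality between the arithmetic mean and the $d$-th power mean of $a$ and $b$. Applying this with $a=1/N(\frak p)$ and $b=M^{-1/2}$ yields the desired intermediate inequality.

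Combining the two steps gives
\[
\left(\frac{\sum_{\chi\in\widehat{\mathcal{O}_K/\frak p}}|\widehat{\mu_{\frak p}}(\chi)|}{N(\frak{p})}\right)^d \leq C_1^d\cdot 2^{d-1}\!\left(\frac{1}{N(\frak p)^d}+M^{-d/2}\right),
\]
so setting $C_2 = 2^{d-1}C_1^d$ finishes the proof. There is essentially no obstacle here; the only thing to be careful about is invoking the convexity (power mean) inequality with the correct constant $2^{d-1}$ rather than $2^d$, so that the constant $C_2$ matches the one in the statement.
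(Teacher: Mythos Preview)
Your proof is correct and matches the paper's approach exactly: the paper simply says ``Applying again the inequality between the means, we immediately obtain the following corollary,'' which is precisely your use of $(a+b)^d\le 2^{d-1}(a^d+b^d)$ after raising the bound from Lemma~\ref{l.ConvEstimate} to the $d$-th power.
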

The last ingredient that we need is the following tail estimate.
\begin{lemma}\label{l.TailEst} We have
$$\lim_{M\to\infty}\mu(\{a\in \mathcal{O}_K|\|a\|>M^{1/2}(\log M)^{1/2d}\})=0.$$
\end{lemma}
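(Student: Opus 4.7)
The plan is to exploit the product structure of $\nu$ to reduce the problem to a union bound on $d$ independent one-dimensional random walks. Concretely, since $\nu=\nu_1*\cdots*\nu_d$ and $\nu_i=\frac12(\delta_{\omega_i}+\delta_{-\omega_i})$, a single step of the random walk $X$ adds $\epsilon_1\omega_1+\cdots+\epsilon_d\omega_d$ where the signs $\epsilon_i\in\{-1,+1\}$ are independent and uniform. Consequently, if we expand $X^{(M)}=\sum_{i=1}^d Y_i\omega_i$ in the basis $\omega_1,\ldots,\omega_d$, then $Y_1,\ldots,Y_d$ are i.i.d., and each $Y_i$ is distributed as the position after $M$ steps of a standard simple symmetric random walk on $\mathbf{Z}$, i.e.\ $Y_i=\sum_{j=1}^M \epsilon_{i,j}$ with i.i.d.\ Rademacher $\epsilon_{i,j}$.

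Since by definition $\|X^{(M)}\|=\max_{1\le i\le d}|Y_i|$, I would then apply a union bound to get
\begin{align*}
\mu\bigl(\{a\in\mathcal{O}_K \mid \|a\|>t\}\bigr)
=\Pb{\max_{1\le i\le d}|Y_i|>t}
\le d\cdot \Pb{|Y_1|>t}.
\end{align*}
The one-dimensional tail is then controlled by a second moment bound: $\mathbb{E}[Y_1]=0$ and $\mathbb{E}[Y_1^2]=M$, so Chebyshev's inequality gives $\Pb{|Y_1|>t}\le M/t^2$.

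Finally, plugging in $t=M^{1/2}(\log M)^{1/2d}$ yields
$$\mu\bigl(\{a\in\mathcal{O}_K \mid \|a\|>M^{1/2}(\log M)^{1/2d}\}\bigr)
\le \frac{dM}{M(\log M)^{1/d}}
= \frac{d}{(\log M)^{1/d}},$$
which tends to $0$ as $M\to\infty$. There is no real obstacle here: the threshold $M^{1/2}(\log M)^{1/2d}$ was tailored precisely to beat the diffusive scale $\sqrt{M}$ of each coordinate by a factor large enough to absorb both the union bound over $d$ coordinates and the Chebyshev bound. (If one wanted a much sharper bound one could replace Chebyshev by Hoeffding's inequality, yielding a stretched-exponential decay $\exp(-\tfrac12(\log M)^{1/d})$, but this is unnecessary for the stated limit.)
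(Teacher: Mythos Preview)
Your proof is correct and follows essentially the same route as the paper's: both reduce to a union bound over the $d$ coordinates and then estimate the tail of a one-dimensional simple random walk on $\Z$. The only difference is that the paper invokes the Central Limit Theorem for the one-dimensional bound, whereas you use Chebyshev's inequality; your choice is more elementary and has the advantage of giving an explicit rate $d/(\log M)^{1/d}$, while the CLT argument requires a small extra step (since the threshold $(\log M)^{1/2d}$ is not fixed but tends to infinity) that the paper leaves implicit.
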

\begin{proof}
Let $E_i$ for $i=1,\ldots, d$ denote the set 
$$E_i=\{a\in \mathcal{O}_K \mid a=a_1\omega_1+\ldots+a_d\omega_d, |a_i|>M^{1/2}(\log M)^{1/2d}\}.$$ Since 
$\{a\in \mathcal{O}_K \mid \|a\|>M^{1/2}(\log M)^{1/2d}\}=E_1\cup\ldots\cup E_d$, we just need to show that $\lim_{M\to\infty}\mu(E_i)=0$ for every $i$. To do this, observe that the measure of $E_i$ depends only on the projection of $\mu$ onto the $i$-th coordinate which is equal to $\nu_i^{*M}$. Hence, 
$$\mu(E_i)=\nu^{*M}(\{z\in \Z \mid |z|>M^{1/2}(\log M)^{1/2d})\},$$
where $\mu=\frac{1}{2}(\delta_1+\delta_{-1})$. By the Central Limit Theorem, $\nu^{*M}$ scaled down by a factor of $M^{1/2}$ converges to the normal distribution $\mathcal{N}(0,1)$. Consequently, $\mu(E_i)$ must converge to zero as $(\log M)^{1/2d}$ tends to infinity.
\end{proof}
Now we are ready to prove that our construction  results in an $n$-universal set provided that $L$ and $M$ are big enough. 
\begin{theorem}
Let $S$ be a random subset of $\mathcal{O}_K$ defined as before and let $P=P_{L,M}$ denote the probability that $S$ is not $n$-universal. Then $\displaystyle\lim_{L\to\infty}\displaystyle\lim_{M\to\infty}P_{L,M}=0$.
\end{theorem}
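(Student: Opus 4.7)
The plan is to assemble Proposition \ref{p.ProbCrit}, Corollary \ref{c.Est1}, and Lemma \ref{l.TailEst}. Apply Proposition \ref{p.ProbCrit} with $m=d$ and with the choice $\kappa = \kappa(M) := M^{1/2}(\log M)^{1/(2d)}$ dictated by Lemma \ref{l.TailEst}; for $M$ large we have $\kappa > \kappa_0$, so
\begin{equation*}
P_{L,M} \;\leq\; C_0 \sum_{L < N(\mathfrak{p}) < c\kappa^d} \left(\frac{\sum_{\chi \in \widehat{\mathcal{O}_K/\mathfrak{p}}}|\widehat{\mu_{\mathfrak{p}}}(\chi)|}{N(\mathfrak{p})}\right)^{\!d} \;+\; (n+d)\,\mu\bigl(\{a \in \mathcal{O}_K : \|a\| > \kappa\}\bigr).
\end{equation*}
By Lemma \ref{l.TailEst}, the last term vanishes as $M \to \infty$, uniformly in $L$.

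For the sum, Corollary \ref{c.Est1} gives the termwise bound $C_2\bigl(N(\mathfrak{p})^{-d} + M^{-d/2}\bigr)$, so
\begin{equation*}
P_{L,M} - o_M(1) \;\ll\; \sum_{N(\mathfrak{p}) > L} \frac{1}{N(\mathfrak{p})^d} \;+\; M^{-d/2} \cdot \bigl|\{\mathfrak{p} : N(\mathfrak{p}) < c\kappa^d\}\bigr|.
\end{equation*}
Since $\sum_{\mathfrak{p}} N(\mathfrak{p})^{-s}$ converges absolutely for $\Re(s) > 1$ (from the Euler product for $\zeta_K$), the first sum is the tail of a convergent series when $d \geq 2$. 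For the second, Landau's prime ideal theorem gives $|\{\mathfrak{p} : N(\mathfrak{p}) \leq x\}| \ll x/\log x$, whence with our choice of $\kappa$,
\begin{equation*}
M^{-d/2} \cdot \bigl|\{\mathfrak{p} : N(\mathfrak{p}) < c\kappa^d\}\bigr| \;\ll\; M^{-d/2} \cdot \frac{M^{d/2}(\log M)^{1/2}}{\log M} \;=\; (\log M)^{-1/2},
\end{equation*}
which also vanishes as $M \to \infty$.

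Consequently, for each fixed $L$, $\limsup_{M \to \infty} P_{L,M} \ll \sum_{N(\mathfrak{p}) > L} N(\mathfrak{p})^{-d}$, and letting $L \to \infty$ the bound tends to zero as the tail of a convergent series, yielding the claim. The main delicacy is the calibration of $\kappa$: it must grow at least like $M^{1/2}$ so that the Central Limit Theorem invoked in Lemma \ref{l.TailEst} gives a vanishing tail, yet the count $\pi_K(c\kappa^d)$ of prime ideals contributing to the Fourier sum must not overwhelm the $M^{-d/2}$ savings from Corollary \ref{c.Est1}. The choice $\kappa = M^{1/2}(\log M)^{1/(2d)}$ threads this needle, and the convergence of $\sum N(\mathfrak{p})^{-d}$ for $d \geq 2$ is precisely what makes the probabilistic method succeed for $n+d$ elements rather than $n+1$.
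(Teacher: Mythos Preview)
Your proof is correct and follows essentially the same route as the paper: the same choice $\kappa=M^{1/2}(\log M)^{1/(2d)}$, the same combination of Proposition~\ref{p.ProbCrit}, Corollary~\ref{c.Est1}, and Lemma~\ref{l.TailEst}, and the same split into the convergent tail $\sum_{N(\mathfrak p)>L}N(\mathfrak p)^{-d}$ plus the prime-ideal count handled by Landau's theorem. The only cosmetic difference is that the paper fixes $\varepsilon$ at the outset while you take $\limsup_{M\to\infty}$ and then let $L\to\infty$; your closing remarks on why the calibration of $\kappa$ works and why $d\geq 2$ is essential are a nice addition.
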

\begin{proof}
Fix $\varepsilon>0$. We need to show that for $L$ big enough and $M$ tending to infinity $P_{L,M}$ is eventually smaller that $\varepsilon$. The sum $\sum_{\frak p\in \Spec \,\mathcal{O}_K}1/N(\frak p)^d$ is well known to be convergent for $d\geq 2$. Fix $L$ large enough so that $$\sum_{N(\frak p)>L}\frac{1}{N(\frak p)^d} <\frac{\varepsilon}{2C_0C_2},$$  where $C_0$ is the constant from Proposition \ref{p.ProbCrit} and $C_2$ is the constant from Corollary \ref{c.Est1}. Let $$D_M=\{a\in \mathcal{O}_K \mid \|a\|>M^{1/2}(\log M)^{1/2d}\}.$$ By Proposition \ref{p.ProbCrit}, there exists a constant $c=c(L)>1$ such that for large $M$ we have 
$$P\leq C_0 \left(\sum_{L<N(\frak p)<cM^{d/2}(\log M)^{1/2}}\left(\frac{\sum_{\chi\in\widehat{\mathcal{O}_K/\frak p}}|\widehat{\mu_{\frak p}}(\chi)|}{N(\frak p)}\right)^d\right) + (n+d)\mu(D_M).$$
Using Corollary \ref{c.Est1}, we get
$$P\leq C_0C_2\left( \sum_{L<N(\frak p)<cM^{d/2}(\log M)^{1/2}}\left(\frac{1}{N(\frak p)^d}+M^{-d/2}\right) \right)+(n+d)\mu(D_M).$$
Let us focus on the sum on the right hand side. We have
$$ \sum_{L<N(\frak p)<cM^{d/2}(\log M)^{1/2}}\left(\frac{1}{N(\frak p)^d}+M^{-d/2}\right)\leq
\sum_{N(\frak p)>L}\frac{1}{N(\frak p)^d}+\pi_{\mathcal{O}_K}(cM^{d/2}(\log M)^{1/2})M^{-d/2},$$
where $\pi_{\mathcal{O}_K}(x)=|\{\frak p\in\Spec\, \mathcal{O}_K \mid N(\frak p)\leq x\}|$. Just like in the ordinary prime counting function, $\pi_{\mathcal{O}_K}(x)$ is asymptotically equal to $x/\log x$ (by Landau Prime Ideal Theorem, \cite[p. 267]{HRV07}). Hence, for $M$ large enough we have
$$\pi_{\mathcal{O}_K}(cM^{d/2}(\log M)^{1/2})M^{-d/2}\leq 2\frac{2cM^{d/2}(\log M)^{1/2}}{d\log M}M^{-d/2}=\frac{4c}{d(\log M)^{1/2}}.$$
The sum $\sum_{N(\frak p)>L}1/N(\frak p)^d$ is less than $\varepsilon/2C_0C_2$, so we get 
$$P\leq \frac{\varepsilon}{2}+\frac{4cC_0C_2}{d(\log M)^{1/2}}+(n+d)\mu(D_M).$$ The constants $c$, $C_0$ and $C_2$ are independent of $M$ and by Lemma \ref{l.TailEst} the last summand tends to $0$ as $M$ tends to infinity. Thus, for $M$ large enough we have $P=P_{L,M}< \varepsilon$.
\end{proof}
Let us rephrase what we have just shown. In the ring of integers $\mathcal{O}_K$ of the field $K$ we choose $n+d$ elements $\{a_1,\ldots, a_{n+d}\}$  such that the first $n+1$ are almost uniformly distributed modulo all prime powers $\mathfrak{p}^k$ with $N(\mathfrak{p})\leq L$. Next, we pick a symmetric measure $\nu=\nu_1*\ldots *\nu_d$ on $\mathcal{O}_K$ which is supported on a set generating $\mathcal{O}_K$. If we consider a random walk $X_i$ on $\mathcal{O}_K$ defined by $X_i^{(0)}=a_i$ and $\Pb{X_i^{(n+1)}=x\mid X_i^{(n)}=y}=\nu((x-y)/L!)$, then the distribution of $X_i^{(M)}$ is nothing else than the distribution of $\xi_i$ for the  same $\xi_i$ as in the proof. Therefore we have shown that $S=S_M=\{X_1^{(M)},\ldots, X_{n+d}^{(M)}\}$ is $n$-universal with high probability as $M\to\infty$, provided that $L$ is chosen big enough.

\subsection*{Acknowledgements}

We would like to express our gratitude to Paul-Jean Cahen for many valuable comments and references to the literature, to Anne-Marie Aubert for careful reading of the first version of this article, and to the anonymous referee for a number of helpful comments.

The first author gratefully acknowledges the support of the grant of the National Science Centre, Poland (NCN) no. DEC-2012/07/E/ST1/00185.

The second author was supported by a public grant as part of the
Investissement d'avenir project, reference ANR-11-LABX-0056-LMH,
LabEx LMH.

The third author would like to thank Bo\.zena Chorzemska-Szumowicz for her help with the figures.

\end{document}